\theoremstyle{plain}
\newtheorem{theorem}{Theorem}[section]
\newtheorem{lemma}[theorem]{Lemma}
\newtheorem{proposition}[theorem]{Proposition}
\newtheorem{corollary}[theorem]{Corollary}
\theoremstyle{definition}
\newtheorem{definition}[theorem]{Definition} 
\newtheorem{remark}[theorem]{Remark}
\newcommand{\bignorm}[1]{{\Big|\Big|#1\Big|\Big|}}
\newcommand{\norm}[1]{{||#1||}}
\newcommand{\wtilde}[1]{{\widetilde{#1}}}
\def\supp{\mathop{\mathrm{supp}}\nolimits}
\def\Ran{\mathop{\mathrm{Ran}}\nolimits}
\def\Ker{\mathop{\mathrm{Ker}}\nolimits}
\def\Re{\mathop{\mathrm{Re}}\nolimits}
\def\Im{\mathop{\mathrm{Im}}\nolimits}
\def\loc{\mathop{\mathrm{loc}}\nolimits}
\def\sgn{\mathop{\mathrm{sgn}}\nolimits}
\def\dist{\mathop{\mathrm{dist}}\nolimits}
\def\R{{\mathbb{R}}}
\def\Z{{\mathbb{Z}}}
\def\N{{\mathbb{N}}}
\def\C{{\mathbb{C}}}
\def\S{{\mathcal{S}}}
\def\F{{\mathcal{F}}}
\def\H{{\mathcal{H}}}
\def\A{{\mathcal{A}}}
\def\B{{\mathcal{B}}}
\def\X{{\mathcal{X}}}
\def\<{{\langle}}
\def\>{{\rangle}}
\def\ep{{\varepsilon}}
\title
{Uniform Sobolev estimates for Schr\"odinger operators with scaling-critical potentials and applications}
\author{Haruya Mizutani
}
\date{\empty}
\begin{document}
\maketitle

\begin{abstract}
We prove uniform Sobolev estimates for the resolvent of Schr\"odinger operators with large scaling-critical potentials without any repulsive condition. As applications, global-in-time Strichartz estimates including some non-admissible retarded estimates, a H\"ormander type spectral multiplier theorem, and Keller type eigenvalue bounds with complex-valued potentials are also obtained. 
\end{abstract}


%
\footnotetext{2010 \textit{Mathematics Subject Classification}. Primary 35P25; Secondary 35J10.}\footnotetext{\textit{Key words and phrases}. uniform Sobolev estimate; limiting absorption principle; Strichartz estimate; spectral multiplier theorem: eigenvalue bounds}

\section{Introduction and main results}
\label{section_Introduction}

This paper is a continuation of \cite{BoMi,Miz1} where uniform estimates for the resolvent $(H-z)^{-1}$ of the Schr\"odinger operator $H=-\Delta+V(x)$ on $\R^n$ with a real-valued potential $V(x)$ exhibiting one critical singularity were investigated under some {\it repulsive} conditions so that $H$ is non-negative and its spectrum $\sigma(H)$ is purely absolutely continuous. In the present paper we improve upon and extend  those previous results to a class of scaling-critical potentials without any repulsive condition such that $H$ may have (finitely many) negative eigenvalues and multiple scaling-critical singularities. Applications to Strichartz estimates, a H\"ormander type multiplier theorem for $H$ and eigenvalue bounds for $H+W$ with complex potential $W$ are also established.

We first recall some known results in the free case, $H=-\Delta$, describing the motivation of this paper. The classical Hardy-Littlewood-Sobolev (HLS for short) inequality 
 states that
\begin{align*}
\norm{(-\Delta)^{-s/2}f}_{L^q}\le C \norm{f}_{L^p}
\end{align*}
for $f\in \S(\R^n)$, $0<s<n$, $1<p<q<\infty$ and $1/p-1/q=s/n$, where $\S(\R^n)$ denotes the space of Schwarz functions, $(-\Delta)^{-s/2}=\F^{-1}|\xi|^{-s}\F$ is the Riesz potential of order $s$ and $\F$ stands for the Fourier transform in $\R^n$. 
An equivalent form is Sobolev's inequality 
\begin{align*}
\norm{f}_{L^q}\le C\norm{(-\Delta)^{s/2}f}_{L^p}. 
\end{align*}
When $s=2$, the HLS inequality can be regarded as the $L^p$-$L^q$ boundedness of the free resolvent $(-\Delta-z)^{-1}$ at $z=0$. In this context, the HLS inequality was extended to non-zero energies $z\neq0$ by Kenig-Ruiz-Sogge \cite{KRS}, Kato-Yajima \cite{KaYa} and Guti\'errez \cite{Gut} as follows: 
\begin{proposition}[Uniform Sobolev estimates]
\label{proposition_free_Sobolev_1}
Let $n\ge3$, $1\le r\le \infty$ and $(p,q)$ satisfy
\begin{align}
\label{p_q}
\frac{2}{n+1}\le \frac1p-\frac1q\le \frac2n,\quad \frac{2n}{n+3}< p<\frac{2n}{n+1},\quad \frac{2n}{n-1}<q<\frac{2n}{n-3}.
\end{align}
Then the free resolvent $R_0(z)=(-\Delta-z)^{-1}$ satisfies
\begin{align}
\label{proposition_free_Sobolev_1_1}
\norm{R_0(z)f}_{L^{q,r}}&\le C |z|^{\frac n2(\frac1p-\frac1q)-1}\norm{f}_{L^{p,r}}
\end{align}
uniformly in $f\in L^{p,r}(\R^n)$, $z\in \C\setminus[0,\infty)$ and $r$, where $L^{p,r}(\R^n)$ denotes the Lorentz space. 
\end{proposition}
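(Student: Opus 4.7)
The plan is to follow the Kenig-Ruiz-Sogge strategy via Stein's complex interpolation of an analytic family of resolvent powers, combined with a scaling reduction and a real interpolation step to pass from Lebesgue to Lorentz targets.

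First I would exploit the scale invariance of $-\Delta$. Writing $z=\lambda^2 w$ with $\lambda=|z|^{1/2}>0$ and $|w|=1$, and using the dilation $D_\lambda f(x)=f(\lambda x)$, one has $R_0(z)=\lambda^{-2}D_\lambda R_0(w)D_{\lambda^{-1}}$. Since $\norm{D_\lambda f}_{L^{p,r}}=\lambda^{-n/p}\norm{f}_{L^{p,r}}$, this reduces the problem to the case $|w|=1$, and the power $|z|^{\frac{n}{2}(\frac{1}{p}-\frac{1}{q})-1}$ in \eqref{proposition_free_Sobolev_1_1} emerges by book-keeping of these homogeneities.

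Next I would split the unit-circle parameter $w=e^{i\theta}$ into two regimes. For $|\theta|$ bounded away from $0$, the multiplier $(|\xi|^2-w)^{-1}$ is bounded and the estimate reduces, via resolvent identities, to the classical Hardy-Littlewood-Sobolev inequality for $(-\Delta+1)^{-1}$. The difficult regime is $\theta\to 0^{\pm}$, where the symbol vanishes on the sphere $|\xi|^2=\Re w$. There I would apply Stein's analytic interpolation theorem to the admissible family
\[
T_w^s=\frac{e^{s^2}}{\Gamma(s)}(-\Delta-w)^{-s},\qquad 0\le\Re s\le\tfrac{n+1}{2},
\]
defined via the Fourier multiplier $(|\xi|^2-w)^{-s}$. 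At $\Re s=0$, Plancherel together with the normalising factor $e^{s^2}/\Gamma(s)$ yields a uniform $L^2\to L^2$ bound of acceptable growth in $\Im s$. At $\Re s=(n+1)/2$, one needs a uniform $L^1\to L^\infty$ bound; this amounts to a pointwise estimate on the convolution kernel of $T_w^s$, which is expressible in terms of Hankel functions and admits the required bound via stationary phase and Van der Corput analysis of the resulting Carleson-Sj\"olin-type oscillatory integral. Obtaining this pointwise bound uniformly as $w$ approaches the positive real axis, with controlled growth in $\Im s$, is the technical heart of the argument and the principal obstacle.

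Stein interpolation at $s=1$ then yields the endpoint Sobolev estimate on the line $\frac{1}{p}-\frac{1}{q}=\frac{2}{n+1}$, and interpolating this endpoint with the Hardy-Littlewood-Sobolev inequality for $(-\Delta)^{-1}$ on the diagonal $\frac{1}{p}-\frac{1}{q}=\frac{2}{n}$ (together with the off-diagonal adjustments via Sobolev embedding) covers the full strong-type range in \eqref{p_q}. Finally, since that range is described by strict inequalities, every admissible pair $(p,q)$ sits in the interior of a segment of strong-type bounds; real interpolation (Marcinkiewicz) between two nearby strong-type estimates then upgrades them to $L^{p,r}\to L^{q,r}$ bounds for every $r\in[1,\infty]$, with constants uniform in $r$, completing the proof.
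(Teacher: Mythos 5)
The paper's own proof is essentially a citation: it first invokes real interpolation (Theorem \ref{theorem_interpolation_2}) to reduce the Lorentz estimate to the Lebesgue case, and then refers to \cite{KRS} (Theorems 2.2 and 2.3), \cite{KaYa}, and \cite{Gut} (Theorem 6) for the three subcases $1/p-1/q=2/n$, $1/p+1/q=1$, and the remaining off-diagonal range. Your proposal instead re-derives the Lebesgue estimates from scratch via the Kenig--Ruiz--Sogge analytic-family method. The scaling reduction, the split according to $\arg w$, and the Stein interpolation of $T_w^s=e^{s^2}\Gamma(s)^{-1}(-\Delta-w)^{-s}$ between $\Re s=0$ (Plancherel) and $\Re s=(n+1)/2$ ($L^1\to L^\infty$ via oscillatory-integral kernel bounds) are precisely what \cite{KRS} do to obtain the dual endpoint $(p,q)=\bigl(\tfrac{2(n+1)}{n+3},\tfrac{2(n+1)}{n-1}\bigr)$, and your closing real-interpolation step coincides with the paper's first step. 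So the overall architecture matches the cited literature.

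However, there are two genuine gaps in the part of your plan that fills out the rest of the trapezium. First, the line $1/p-1/q=2/n$ does \emph{not} follow from the Hardy--Littlewood--Sobolev inequality for $(-\Delta)^{-1}$. A pointwise bound $|(|\xi|^2-z)^{-1}|\le|\xi|^{-2}$ does not transfer to an $L^p\to L^q$ operator bound, and for $n\ge4$ the resolvent kernel is genuinely not dominated pointwise by the free Green's function (cf.\ the kernel bound \eqref{pointwise}, which has an extra $\langle z\rangle^{(n-3)/4}|x-y|^{-(n-1)/2}$ term). The uniform estimate on the HLS line is Theorem 2.2 of \cite{KRS} and requires its own proof (by its own complex interpolation or by a careful kernel estimate), which your plan omits. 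Second, ``off-diagonal adjustments via Sobolev embedding'' do not close the argument for the rest of the trapezium: composing $R_0(z)$ with $(-\Delta)^{-\alpha/2}$ produces a bound for $R_0(z)(-\Delta)^{-\alpha/2}$, which is a different operator and cannot be converted back to an estimate for $R_0(z)$ itself without an unbounded inverse power of $(-\Delta)$. Moreover, Stein interpolation of a single family $T_w^s$ at $s=1$ only yields the one dual pair on the line $1/p+1/q=1$; the remaining off-diagonal region is exactly the contribution of \cite{Gut}, which uses a separate (two-parameter) interpolation argument. Your proposal would be correct if you replaced the "HLS plus Sobolev embedding" step by the actual content of \cite[Theorem 2.2]{KRS} and \cite[Theorem 6]{Gut}, at which point you would have reproved the results that the paper simply cites.
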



\begin{proof}[Sketch of proof]
By virtue of real interpolation (see Theorem \ref{theorem_interpolation_2} in Appendix \ref{appendix_interpolation}), we may replace without loss of generality $L^{p,r}$ and $L^{q,r}$ by $L^{p}$ and $L^q$, respectively. 
Then the case $1/p+1/q=1$ was proved independently by \cite[Theorem 2.3]{KRS} and \cite[(3.29) in pages 493]{KaYa}; the case $1/p-1/q=2/n$ is due to \cite[Theorems 2.2]{KRS}; otherwise, we refer to \cite[Theorem 6]{Gut}. 
\end{proof}

Note that, when $1/p-1/q=2/n$, the estimate is uniform in $z$  as its name suggests. 

Uniform Sobolev estimates can be used in the study of broad areas including the spectral and scattering theory for Schr\"odinger operators. In \cite{KRS}, the authors applied \eqref{proposition_free_Sobolev_1_1} to study unique continuation properties of $-\Delta+V$ with $V\in L^{n/2}$. In \cite{KaYa,GoSc,IoSc},  \eqref{proposition_free_Sobolev_1_1} was used to show the limiting absorption principle and asymptotic completeness of wave operators for $-\Delta+L$ with a large class of singular perturbations $L$. In \cite{Fra1}, \eqref{proposition_free_Sobolev_1_1} was used to prove the Keller type inequality for $-\Delta+W(x)$ with a complex potential $W\in L^{p}$ with some $p\ge n/2$, which is a quantitative estimate of the spectral radius of $\sigma_{\mathrm{p}}(-\Delta+W)$. In \cite{Gut}, \eqref{proposition_free_Sobolev_1_1} was applied to show the existence of $L^q$-solutions for the stationary Ginzburg-Landau equation under some radiation condition. 

In a more abstract setting, the following observations are satisfied for not only $\Delta$ but also a  general non-negative self-adjoint operator $L$ on $L^2(X,\mu)$: 
\begin{itemize}
\item the uniform Sobolev estimate with $p=\frac{2n}{n+2}$ and $q=\frac{2n}{n-2}$ implies that, for any $w\in L^n$, the weighted resolvent $w(L-z)^{-1}w$ is bounded on $L^2$ uniformly in $z\in \C\setminus[0,\infty)$. As observed by \cite{Kat,KaYa,RoSc}, such a weighted estimate is closely connected with dispersive properties of the solution to \eqref{Cauchy} such as Kato-smoothing effects, time-decay and Strichartz estimates which are fundamental tools in the study of nonlinear Schr\"odinger equations (see \cite{Tao});
\item uniform Sobolev estimates imply that the spectral measure $dE_{L}(\lambda)$ associated with $L$ is bounded from $L^p$ to $L^{p'}$ for $\frac{2n}{n+2}\le p\le \frac{2(n+1)}{n+3}$. This is an important input to prove the H\"ormander type theorem on the $L^p$ boundedness of the spectral multiplier $f(L)$ (see \cite{COSY}). 
\end{itemize}

Motivated by those observations, we are interested in extending \eqref{proposition_free_Sobolev_1_1} to the Schr\"odinger operator $H=-\Delta+V(x)$. If $V$ is of very short range type in the sense that, with some $\ep>0$,
\begin{align}
\label{very_short_range}
|V(x)|\le C(1+|x|)^{-2-\ep},\quad x\in \R^n,
\end{align}
then there is a vast literature on uniform weighted $L^2$-estimates for $(H-z)^{-1}$ without any additional repulsive condition such as suitable smallness of the negative part of $V$ (see, {\it e.g.}, \cite{JeKa,RoTa} and references therein).  Weighted $L^2$-estimates were also obtained for a class of potentials satisfying $|x|^2V\in L^\infty$ under some additional repulsive conditions (\cite{BPST2,BVZ}). In our previous works \cite{BoMi,Miz1}, we proved uniform Sobolev estimates for $H$ with a class of critical potentials $V\in L^{n/2,\infty}$ under some repulsive conditions so that $H$ has purely absolutely continuous spectrum. However, in these literatures, the range of $(p,q)$ has been restricted on the line $1/p+1/q=1$. Furthermore, the situation for (large) critical potentials without any repulsive condition is less understood. 

The main goal of this paper is to prove the full set of uniform Sobolev estimates for $H=-\Delta+V(x)$ with a large scaling-critical potential $V\in L^{n/2,\infty}_0$ without any repulsive condition. The following three types of applications are also established in  the paper: (i) we prove global-in-time Strichartz estimates for the Schr\"odinger equation, 
\begin{align}
\label{Cauchy}
i\partial_t u(t,x)=Hu(t,x)+F(t,x),\ (t,x)\in \R^{1+n};\quad u(0,x)=\psi,\ x\in\R^n,
\end{align}
for all admissible cases and several non-admissible cases; (ii) a H\"ormander type spectral multiplier  theorem for $f(H)$ is obtained provided that $H$ is non-negative; (iii) we obtain Keller type estimates for the eigenvalues (including possible embedded eigenvalues) of the operator $H+W$ with complex potentials $W\in L^p$, $n/2<p\le (n+1)/2$. 

Finally, we mention that the results in this paper could be used to study spectral and scattering theory for both linear and nonlinear Schr\"odinger equations with potentials $V\in L^{n/2,\infty}_0$. 
\\\\
{\it Notation}. 
$A\lesssim B$ (resp. $A\gtrsim B$) means $A\le cB$ (resp. $A\ge cB$) with some universal constant $c>0$. $\<x\>$ stands for $\sqrt{1+|x|^2}$. $\C^\pm:=\{z\in \C\ |\ \pm\Im z>0\}$. Given two Banach spaces $X$ and $Y$, $\mathbb B(X,Y)$ is the Banach space of bounded linear operators from $X$ to $Y$ and $\mathbb B(X)=\mathbb B(X,X)$; $\mathbb B_\infty(X,Y)$ and $\mathbb B_\infty(X)$ are families of compact operators. $\<f,g\>=\int f\overline gdx$ denotes the inner product in $L^2$. We also use the same notation $\<\cdot,\cdot\>$ for the dual coupling between $L^p$ and $L^{p'}$, where $p'=p/(p-1)$ denotes the H\"older conjugate of $p$. $L^p_t\X_x=L^p(\R;\X)$ is the Bochner-Lebesgue space with norm $\norm{F}_{L^p_t\X}=\norm{\norm{F(t,x)}_{\X_x}}_{L^p_t}$. $L^p_TL^q_x:=L^p([-T,T];L^q(\R^n))$. Let $\<\cdot,\cdot\>_T$ be the inner product in $L^2_TL^2_x$ defined by 
$$
\<F,G\>_T=\int_{-T}^T\<F(\cdot,t),G(\cdot,t)\>dt. 
$$
$\H^s(\R^n)$ and $\dot\H^s(\R^n)$ are inhomogeneous and homogeneous $L^2$-Sobolev spaces, respectively. $\mathcal W^{s,p}(\R^n)$ is the $L^p$-Sobolev space. $L^{p,q}(\R^n)$ denotes the Lorentz space (see Appendix \ref{appendix_interpolation}). 

\subsection{Main results}
Throughout the paper we assume that $n\ge3$ and that $V\in L^{n/2,\infty}_0(\R^n)$ is a real-valued function, where $L^{p,\infty}_0(\R^n)$ is the completion of $C_0^\infty(\R^n)$ with respect to the norm $\norm{\cdot}_{L^{p,\infty}}$. It follows from 
H\"older's and Sobolev's inequalities for Lorentz norms (see Appendix \ref{appendix_interpolation}) that $V$ is $\Delta$-form compact. Then the KLMN theorem (\cite[Theorem X.17]{ReSi}) yields that there exists a unique lower semi-bounded self-adjoint operator $H$ on $L^2(\R^n)$ with form domain $\H^1(\R^n)$ such that $$\<Hu,v\>=\<(-\Delta+V)u,v\>,\quad u\in D(H),\ v\in \H^1(\R^n)$$
and that its domain $D(H)=\{u\in \H^1(\R^n)\ |\ Hu\in L^2(\R^n)\}$ is dense in $\H^1(\R^n)$. In other words, $H$ is defined as the Friedrichs extension of the  sesquilinear form $\<(-\Delta+V)u,v\>$. 

\begin{remark}
Note that $L^{n/2,q}\hookrightarrow L^{n/2,\infty}_0$ for all $1\le q<\infty$. Also note that the class $L^{n/2,\infty}_0$ is scaling-critical in the sense that the norm $\norm{V}_{L^{n/2,\infty}}$ is invariant under the scaling $V\mapsto V_\lambda$, where $V_\lambda(x)=\lambda^2V(\lambda x)$. In particular, if $V$ itself is invariant under this scaling, the potential energy $\<Vu,u\>$ has the same scale invariant structure as that for the kinetic energy $\<-\Delta u,u\>$. 
\end{remark}

Let $\mathcal E\subset \sigma(H)$ be the exceptional set of $H$, the set of all eigenvalues and resonances of $H$ (see Definition \ref{definition_resonance}). Note that $\mathcal E\cap (-\infty,0)=\sigma_{\mathrm{d}}(H)$, the discrete spectrum of $H$, and that $\mathcal E$ is bounded in $\R$ (see Remark \ref{remark_weighted_4}). For the absence of embedded eigenvalues and resonances, we have the following simple criterion (see also Remark \ref{remark_multiplier_1}): 

\begin{lemma}
\label{lemma_absence_1}Let $V$ be as above. Then the following statements are satisfied. 
\begin{itemize}
\item[{\rm (1)}] If $V\in L^{n/2}$ then there is no positive eigenvalues and resonances; that is, $\mathcal E\cap (0,\infty)=\emptyset$; 
\item[{\rm (2)}] If $-\Delta+V\ge -\delta\Delta$ with some $\delta>0$ in the sense of forms on $C_0^\infty$ then $0\notin \mathcal E$. 
\end{itemize}
\end{lemma}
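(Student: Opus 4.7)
For (2), the argument is essentially a one-line form identity. By density of $C_0^\infty$ in $\H^1$ and the $\Delta$-form compactness of $V$, the hypothesis $-\Delta+V\ge-\delta\Delta$ on $C_0^\infty$ extends to
\[
\langle Hu,u\rangle\ge\delta\|\nabla u\|_{L^2}^2,\qquad u\in\H^1.
\]
If $0$ were an eigenvalue of $H$, the corresponding eigenfunction $u$ would lie in $D(H)\subset\H^1$ and satisfy $\|\nabla u\|_{L^2}=0$, so $u$ would be almost-everywhere constant; since $u\in L^2(\R^n)$ with $n\ge 3$, this forces $u\equiv 0$, a contradiction. If $0$ were a resonance in the sense of Definition \ref{definition_resonance}, the associated $u$ would belong to a homogeneous space such as $\dot{\H}^1(\R^n)$ (which embeds into $L^{2n/(n-2)}$ and hence excludes non-zero constants), and the same identity would again give $\nabla u\equiv 0$, hence $u\equiv 0$.

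For (1), I plan a two-step argument. \emph{Step 1:} bootstrap any positive resonance into an $L^2$-eigenfunction. A positive resonance is characterized (by Definition \ref{definition_resonance}) via the Birman--Schwinger equation
\[
\bigl(I+|V|^{1/2}R_0(\lambda+i0)\,\sgn(V)\,|V|^{1/2}\bigr)\phi=0
\]
for some non-trivial $\phi\in L^2$, with associated generalized eigenfunction $u=-R_0(\lambda+i0)\bigl(\sgn(V)\,|V|^{1/2}\phi\bigr)$. Starting with $u$ in some Lorentz space $L^{q,r}$ with $q$ inside the range of \eqref{p_q}, I would decompose $V=V_1+V_2$ with $\|V_1\|_{L^{n/2}}$ arbitrarily small and $V_2\in L^\infty$ of compact support, apply H\"older in Lorentz spaces to control $Vu$, and feed back into $u=-R_0(\lambda+i0)Vu$ via Proposition \ref{proposition_free_Sobolev_1}. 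Each iteration strictly improves the integrability of $u$, and finitely many steps place $u$ in $L^2(\R^n)$. \emph{Step 2:} once $u\in L^2(\R^n)$ solves $(-\Delta+V-\lambda)u=0$ with $\lambda>0$ and $V\in L^{n/2}$, the absence of embedded positive eigenvalues for Schr\"odinger operators with $L^{n/2}$-potentials (due to Ionescu--Jerison, whose proof combines Carleman-type estimates with the Kenig--Ruiz--Sogge uniform Sobolev inequality) forces $u\equiv 0$, contradicting $\lambda\in\mathcal{E}\cap(0,\infty)$.

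The main obstacle is the bootstrap in Step 1 of (1): the iteration must chain the uniform Sobolev estimates at the spectral edge $\lambda+i0$ so that each step strictly improves integrability, with the small $V_1$-part absorbed by its $L^{n/2}$-smallness and the compactly supported bounded $V_2$-part controlled separately. Part (2) is, by contrast, immediate from the form coercivity once extended from $C_0^\infty$ to $\H^1$.
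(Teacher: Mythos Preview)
Your treatment of (2) is correct and essentially matches the paper's: both use that a zero-energy resonance state lies in $\dot\H^1$ (the paper records this as \eqref{proof_proposition_exceptional_set_1}), so the form identity $0=\langle(-\Delta+V)u,u\rangle\ge\delta\|\nabla u\|_{L^2}^2$ forces $\nabla u=0$ and hence $u\equiv0$.

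For (1), however, Step~1 has a genuine gap. The uniform Sobolev estimates of Proposition~\ref{proposition_free_Sobolev_1} map $L^{p,r}\to L^{q,r}$ only for $(p,q)$ in the range \eqref{p_q}, which forces $q>\tfrac{2n}{n-1}>2$. Consequently no iteration of $u=-R_0(\lambda+i0)Vu$ through these estimates can ever land in $L^2$: the output always sits in $L^q$ with $q$ strictly above $2$. Indeed, the paper's own bootstrap of exactly this type (the proof of Proposition~\ref{proposition_exceptional_set}~(2)) establishes only that $\mathcal N_s(\lambda)$ is independent of $s\in(1/2,3/2)$, i.e.\ that $u\in L^{q_s,2}$ for all such $s$ --- exponents that remain strictly larger than $2$. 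Placing the compactly supported piece $V_2u$ in an arbitrary $L^p$ does not help, since applying $R_0(\lambda+i0)$ to it via Proposition~\ref{proposition_free_Sobolev_1} still only returns $L^q$ with $q>\tfrac{2n}{n-1}$. The paper closes this gap with a different mechanism: a \emph{weighted} estimate of Ionescu--Schlag (Proposition~\ref{proposition_IoSc}) which gives $\langle x\rangle^N u\in X^*\subset L^{2n/(n-2)}$ for every $N\ge0$; taking $N>1$ and using H\"older ($\langle x\rangle^{-N}\in L^n$) then yields $u\in L^2$. This polynomial-weight input is precisely what your integrability bootstrap is missing. Your Step~2 (Ionescu--Jerison) is fine and coincides with the paper's.
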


\begin{proof}
The proof will be given in Subsection \ref{subsection_resonance}
\end{proof}

Define $
\mathcal E_\delta:=\{z\in \C\ |\ \dist(z,\mathcal E)<\delta\}
$ if $\mathcal E\neq\emptyset$ and $\mathcal E_\delta:=\emptyset$ if $\mathcal E=\emptyset$. For $z\in \C\setminus \sigma(H)$, $R(z)=(H-z)^{-1}$ denotes the resolvent of $H$. 

Then the main result in this paper is as follows. 

\begin{theorem}
\label{theorem_KRS_1}
Suppose that $(p,q)$ satisfies \eqref{p_q}. Then $R(z)$ extends to a bounded operator from $L^{p,2}$ to $L^{q,2}$ for all $z\in \C\setminus\sigma(H)$. Moreover, for any $\delta>0$ there exists $C_\delta>0$ such that 
\begin{align}
\label{theorem_KRS_1_1}
\norm{R(z)f}_{L^{q,2}}&\le C_\delta |z|^{\frac n2(\frac1p-\frac1q)-1}\norm{f}_{L^{p,2}}
\end{align}
 for all $z\in \C\setminus([0,\infty)\cup\mathcal E_{\delta})$ and $f\in L^{p,2}$. In particular, if  $\mathcal E=\emptyset$, then \eqref{theorem_KRS_1_1} holds uniformly with respect to $z\in\C\setminus[0,\infty)$ and $f\in L^{p,2}$.  
\end{theorem}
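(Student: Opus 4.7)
The approach is the Birman--Schwinger / symmetric resolvent identity. Factorize $V = v U v$ with $v = |V|^{1/2} \in L^{n,\infty}_0$ and $U = \sgn(V)$, set $T_0(z) = v R_0(z) v$ and $M(z) = I + T_0(z) U$ on $L^2$, and use
\[
R(z) = R_0(z) - R_0(z)\, v\, M(z)^{-1}\, U v\, R_0(z).
\]
This splits the task into a uniform $L^2$ bound on $M(z)^{-1}$ and Sobolev-type mapping estimates on the boundary factors built from $R_0(z)$ and $v$.

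For the $L^2$ bound, Lorentz--H\"older (Appendix \ref{appendix_interpolation}) and Proposition \ref{proposition_free_Sobolev_1} at the endpoint $(p,q) = (2n/(n+2),\, 2n/(n-2))$ yield $\|T_0(z)\|_{L^2 \to L^2} \lesssim 1$ uniformly in $z \in \C \setminus [0,\infty)$. Since $v \in L^{n,\infty}_0$ is approximable by $C_0^\infty$ functions, $T_0(z)$ is compact on $L^2$, continuous in $z$, and $\|T_0(z)\| \to 0$ as $|z| \to \infty$. By the Birman--Schwinger principle $M(z)$ is non-invertible precisely on $\mathcal E$; the Fredholm alternative together with a limiting absorption argument extending $T_0(\lambda \pm i0)$ continuously to $\lambda \in [0,\infty)\setminus\mathcal E$ then gives $\|M(z)^{-1}\|_{L^2 \to L^2} \le C_\delta$ for $z \in \C\setminus([0,\infty)\cup\mathcal E_\delta)$, equivalently $\|v R(z) v\|_{L^2 \to L^2} \le C_\delta$ on the same set.

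For the Sobolev estimate, I apply the two-fold symmetric identity
\[
R(z) = R_0(z) - R_0(z) V R_0(z) + R_0(z) V R(z) V R_0(z)
\]
and bound each term. The first is Proposition \ref{proposition_free_Sobolev_1}. For the middle term, Lorentz--H\"older gives $V \colon L^{q_1, 2} \to L^{p_1, 2}$ whenever $1/p_1 - 1/q_1 = 2/n$; a short interval computation shows that for every $(p,q)$ satisfying \eqref{p_q} one may choose such an intermediate pair $(p_1, q_1)$ so that both $(p, q_1)$ and $(p_1, q)$ also satisfy \eqref{p_q}, and composing two applications of Proposition \ref{proposition_free_Sobolev_1} with Lorentz--H\"older produces a bound whose two $|z|$-exponents telescope, thanks to $1/p_1 - 1/q_1 = 2/n$, to exactly $\frac{n}{2}(\frac{1}{p} - \frac{1}{q}) - 1$. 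For the third term, factor $R_0 V R V R_0 = R_0 v \cdot U(v R v) U \cdot v R_0$ and sandwich the $L^2$ bound from the first step between the free Sobolev mapping bounds on $R_0(z) v \colon L^2 \to L^{q, 2}$ and $v R_0(z) \colon L^{p, 2} \to L^2$ obtained by Lorentz--H\"older and Proposition \ref{proposition_free_Sobolev_1}; the $|z|$-exponents again combine to $\frac{n}{2}(\frac{1}{p}-\frac{1}{q}) - 1$.

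The main obstacle is the uniform control of $M(z)^{-1}$ as $z$ approaches $[0,\infty)\setminus\mathcal E$: this requires a delicate limiting absorption principle in which $T_0(\lambda \pm i0)$ extends continuously in operator norm as a compact operator on $L^2$, together with the identification of $\mathcal E$ as the set where $I + T_0(z) U$ has nontrivial kernel. A secondary point is that the $L^2$-sandwich in the third term only directly handles $(p,q)$ with $p \le 2n/(n+2)$ and $q \ge 2n/(n-2)$; the remaining pairs in \eqref{p_q} are recovered by a real interpolation argument of the type given by Theorem \ref{theorem_interpolation_2}, interpolated against this endpoint sub-region in a way that preserves the $|z|^{n/2(1/p-1/q)-1}$ weight.
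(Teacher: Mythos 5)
Your overall strategy is genuinely different from the paper's: you control the Birman--Schwinger operator $vR_0(z)v$ on $L^2$ and then try to transfer this to the Lorentz scale through the two-fold identity $R=R_0-R_0VR_0+R_0VRVR_0$, whereas the paper proves compactness and norm-continuity of $R_0^\pm(z)V$ directly on $L^{q_s,2}$ (Proposition \ref{compactness}), deduces a uniform bound for $(I+R_0^\pm(z)V)^{-1}$ in $\mathbb B(L^{q_s,2})$ away from $\mathcal E_\delta$ (Proposition \ref{proposition_weighted_0}), and concludes from $R(z)=(I+R_0(z)V)^{-1}R_0(z)$, which yields every pair in \eqref{p_q} at once. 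The first two terms of your expansion are handled correctly (taking $q_1=q$, $1/p_1=1/q+2/n$ makes the exponents telescope), and the uniform $L^2$ bound on $vR(z)v$ off $[0,\infty)\cup\mathcal E_\delta$ is plausible and mirrors the paper's Lemma \ref{lemma_equivalence_1} and Proposition \ref{proposition_weighted_0} at $s=1$ (though the identification of $\mathcal E$ with the non-invertibility set of your $M(z)$ is itself a lemma that needs the bijection argument of Lemma \ref{lemma_equivalence_1}, and in your first display the sign function is on the wrong side of $M(z)^{-1}$: the correct identity is $R=R_0-R_0vU(I+T_0U)^{-1}vR_0$).

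The genuine gap is in the third term. Sandwiching $U(vR(z)v)U$ between $R_0(z)v:L^2\to L^{q,2}$ and $vR_0(z):L^{p,2}\to L^2$ forces you to use the free pairs $(\tfrac{2n}{n+2},q)$ and $(p,\tfrac{2n}{n-2})$, and these satisfy \eqref{p_q} only when $p\ge\tfrac{2n}{n+2}$, $q\le\tfrac{2n}{n-2}$ and, in addition, $\tfrac1p-\tfrac{n-2}{2n}\ge\tfrac{2}{n+1}$ and $\tfrac{n+2}{2n}-\tfrac1q\ge\tfrac{2}{n+1}$ (note this is the opposite side of $\tfrac{2n}{n+2}$, $\tfrac{2n}{n-2}$ from what you wrote, since for $p<\tfrac{2n}{n+2}$ the pair $(p,\tfrac{2n}{n-2})$ has $\tfrac1p-\tfrac1q>\tfrac2n$ and lies outside the range of Proposition \ref{proposition_free_Sobolev_1}). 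So the sandwich only covers a small convex neighbourhood of the single point $(\tfrac{2n}{n+2},\tfrac{2n}{n-2})$ inside the trapezium of Figure 1, and real interpolation cannot repair this: interpolating estimates you already have produces only exponent pairs in the convex hull of the pairs already controlled, and there is no other uniform-in-$z$ estimate for $R(z)$ available to interpolate against. In particular the off-diagonal pairs near the segments $\overline{AB}$ and $\overline{B'A'}$ --- precisely the new content of Theorem \ref{theorem_KRS_1} --- are not reached. To close this you need more than the $L^2$ information on $vR(z)v$: either mapping bounds for $VR(z)V$ between other Lorentz spaces, or (as in the paper) invertibility of $I+R_0(z)V$ in the target space $L^{q,2}$ itself, which is exactly what Propositions \ref{compactness} and \ref{proposition_weighted_0} provide. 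A rigorous justification of the two-fold resolvent identity for the form-defined $H$ (the analogue of Lemma \ref{lemma_resolvent_identity_1}) is also missing, but that is a secondary matter compared with the range issue above.
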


As a corollary, the limiting absorption principle  in the same topology is derived. 

\begin{corollary}
\label{corollary_KRS_1}
Let $(p,q)$ satisfy \eqref{p_q}. Then the following statements are satisfied.
\begin{itemize}
\item[{\rm (1)}] The boundary values $R(\lambda\pm i0)=\lim\limits_{\ep\searrow0}R(\lambda\pm i\ep)\in \mathbb B(L^{p,2},L^{q,2})$ exist for all $\lambda\in (0,\infty)\setminus\mathcal E$. Moreover, for any $\delta>0$ there exists $C_\delta>0$ such that
\begin{align}
\label{corollary_KRS_1_1}
\norm{R(\lambda\pm i0)f}_{L^{q,2}}\le C_\delta \lambda^{\frac n2(\frac1p-\frac1q)-1}\norm{f}_{L^{p,2}},\quad f\in L^{p,2}(\R^n),\ \lambda\in (0,\infty)\setminus\mathcal E_\delta. 
\end{align}
In particular, if 
$\mathcal E\cap[0,\infty)=\emptyset$, then \eqref{corollary_KRS_1_1} holds uniformly in $\lambda>0$.
\item[{\rm (2)}] 
Assume in addition that $1/p-1/q=2/n$ and $0\notin \mathcal E$. Then $R(0\pm i0)\in \mathbb B(L^{p,2},L^{q,2})$ exist and $R(0+i0)=R(0-i0)$. Moreover, $HR(0+i0)f=f$ and $R(0+i0)Hg=g$ for all $f,g\in \S$ in the sense of distributions. In particular, one has the HLS type inequality
\begin{align}
\label{corollary_KRS_1_2}
\norm{H^{-1}f}_{L^{q,2}}\le C\norm{f}_{L^{p,2}},\quad f\in L^{p,2}(\R^n). 
\end{align}
\end{itemize}
\end{corollary}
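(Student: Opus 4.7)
The plan is to deduce both statements from Theorem \ref{theorem_KRS_1} by sending $\Im z \to 0^\pm$: the uniform bound \eqref{theorem_KRS_1_1} does the quantitative work, while a weighted-$L^2$ limiting absorption principle (LAP), available in this setting via the resolvent identity $R(z)=R_0(z)-R_0(z)VR(z)$ and the compactness of $VR_0(z)$ on weighted $L^2$, is used only to \emph{identify} the boundary limit.

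For part (1), fix $\lambda\in(0,\infty)\setminus\mathcal E$ and choose $\delta>0$ with $\dist(\lambda,\mathcal E)>\delta$. By Theorem \ref{theorem_KRS_1} the family $\{R(\lambda\pm i\ep)\}_{0<\ep<\delta/2}$ is uniformly bounded from $L^{p,2}$ to $L^{q,2}$ with the stated constant. For $f\in\S$, the weighted-$L^2$ LAP provides a strong limit of $R(\lambda\pm i\ep)f$ in $\<x\>^s L^2$ for some $s>1/2$; since $L^{q,2}$ is reflexive when $1<q<\infty$, Banach--Alaoglu extracts weak cluster points in $L^{q,2}$, all of which must coincide with the weighted-$L^2$ limit and are therefore unique. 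Hence the net $R(\lambda\pm i\ep)f$ converges weakly in $L^{q,2}$, and we define $R(\lambda\pm i0)f$ as this limit. Lower semicontinuity of the norm under weak convergence yields \eqref{corollary_KRS_1_1} on $\S$, and density of $\S$ in $L^{p,2}$ together with the uniform bound promotes everything to all of $L^{p,2}$. The uniform-in-$\lambda>0$ statement when $\mathcal E\cap[0,\infty)=\emptyset$ is immediate from the boundedness of $\mathcal E$, which allows a single $\delta_0$ to work for every $\lambda\ge 0$.

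For part (2), the relation $1/p-1/q=2/n$ annihilates the exponent of $|z|$ in \eqref{theorem_KRS_1_1}, so Theorem \ref{theorem_KRS_1} becomes a $z$-uniform $L^{p,2}\to L^{q,2}$ bound on $\C\setminus([0,\infty)\cup\mathcal E_\delta)$. Since $0\notin\mathcal E$ we may fix $\delta$ with $\dist(0,\mathcal E)>\delta$, and the argument of (1) applied to $z_k\to 0$ with $\pm\Im z_k>0$ produces $R(0\pm i0)\in\mathbb B(L^{p,2},L^{q,2})$. To obtain $HR(0+i0)f=f$ for $f\in\S$, pair $(H-z)R(z)f=f$ against $\varphi\in C_0^\infty(\R^n)$: self-adjointness gives $\<R(z)f,(H-\bar z)\varphi\>=\<f,\varphi\>$, and sending $z\to 0$ with $\Im z>0$ the weak convergence $R(z)f\rightharpoonup R(0+i0)f$ in $L^{q,2}$ tested against $H\varphi\in L^{q',2}$, together with $\bar z\<R(z)f,\varphi\>\to 0$, gives $\<R(0+i0)f,H\varphi\>=\<f,\varphi\>$, i.e.\ $HR(0+i0)f=f$ in the distributional sense. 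The companion identity $R(0+i0)Hg=g$ is obtained analogously from $R(z)(H-z)g=g$. Self-adjointness of $H$ furnishes $R(\bar z)=R(z)^*$, hence $R(0-i0)=R(0+i0)^*$; combined with the two identities just proved and density of $\S$, this forces $R(0+i0)=R(0-i0)$, so $R(0+i0)$ is a symmetric bounded inverse of $H$, and \eqref{corollary_KRS_1_2} is just a restatement with $H^{-1}:=R(0+i0)$.

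The main obstacle is the identification step: the uniform bound alone only yields weak cluster points of $R(\lambda\pm i\ep)f$ in $L^{q,2}$, so one genuinely needs a LAP in a weaker topology (with uniqueness of limits there) to justify labelling the limit $R(\lambda\pm i0)$. The threshold case $\lambda=0$ is more delicate because it additionally demands uniform control of $(I+R_0(z)V)^{-1}$ in a suitable space as $z\to 0$, and this is precisely where the non-resonance hypothesis $0\notin\mathcal E$ enters.
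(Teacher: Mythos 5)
Your overall architecture is right and matches the paper's: Theorem \ref{theorem_KRS_1} supplies the quantitative bound, and a LAP in a weaker topology is used only to identify the limit of $R(\lambda\pm i\ep)f$. The gap is in the identification step. You invoke a ``weighted-$L^2$ LAP'' for $H$ with polynomial weights, asserting that $R(\lambda\pm i\ep)f$ converges strongly in $\<x\>^sL^2$ for $s>1/2$, and justify this by ``the compactness of $VR_0(z)$ on weighted $L^2$''. But for the operator $R_0(z)V$ (or $VR_0(z)$) to act compactly between the polynomially weighted spaces $L^2_{\pm s}$, the multiplication by $V$ must map $L^2_{-s}\to L^2_{s}$, which forces $\<x\>^{2s}V\in L^\infty$ (and in fact decay to zero); that is, one needs $|V(x)|\lesssim\<x\>^{-1-\ep}$ or better. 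No such pointwise decay is implied by $V\in L^{n/2,\infty}_0$ --- for example, $c|x|^{-2}$ (or even milder, slowly-decaying members of this class) violates the required decay --- so the polynomial-weight LAP for $H$ is simply not available here, and your argument would fail for a generic $V$ in the admitted class.

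The paper's actual identification tool is different and is worth noting explicitly: it shows that for $\chi_1,\chi_2\in C_0^\infty$ the map $z\mapsto\chi_1R(z)\chi_2$ extends continuously to $\overline{\C^\pm}\setminus\mathcal E$ in $\mathbb B(L^2)$, by writing $V_1R(z)\chi_2=(I+K_1^+(z))^{-1}V_1R_0^+(z)\chi_2$ with a factorization $V=V_1V_2$, $V_1,V_2\in L^{n,\infty}_0$, and using compactness and continuity of $K_1^+(z)=V_1R_0^+(z)V_2$ in $\mathbb B(L^2)$ (Corollary \ref{corollary_free_Sobolev_2}) together with the Fredholm alternative and Proposition \ref{proposition_exceptional_set}. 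This provides continuity of $\<R^+(z)u,v\>$ for $u,v\in C_0^\infty$, which is enough to uniquely identify the weak $L^{q,2}$ cluster points and is compatible with the scaling-critical potential class, unlike the polynomial-weight LAP. If you replace your $\<x\>^sL^2$ step with this ``compactly localized'' LAP, the rest of your argument (weak compactness from the uniform bound, uniqueness of limits, lower semicontinuity, density) goes through. Part (2) is essentially correct; your route to $R(0+i0)=R(0-i0)$ via $R(0-i0)=R(0+i0)^*$ and the two-sided inverse relations is a sound alternative to the paper's more direct use of the resolvent identity $R(0\pm i0)=(I+R_0(0\pm i0)V)^{-1}R_0(0\pm i0)$ with $R_0(0+i0)=R_0(0-i0)=(-\Delta)^{-1}$.
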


As a byproduct of Theorem \ref{theorem_KRS_1}, we also obtain the $L^p$-$L^q$ boundedness of $R(z)$ for fixed $z$ with a wider range than \eqref{p_q}. 

\begin{corollary}
\label{corollary_KRS_2}
For any $z\in \C\setminus\sigma(H)$, $R(z)$ is bounded from $L^{p,2}$ to $L^{q,2}$ whenever 
\begin{align}
\label{corollary_KRS_2_1}
0\le \frac1p-\frac1q\le \frac 2n,\quad \frac{2n}{n+3}<p,q<\frac{2n}{n-3}. 
\end{align}
In particular, $D(H)\subset D(w)$ for any $w\in L^{n/s,\infty}$ with $0\le s<3/2$. Here $D(w)$ denotes the domain of the multiplication operator by $w(x)$. 
\end{corollary}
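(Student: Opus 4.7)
The plan combines real interpolation of Theorem \ref{theorem_KRS_1} with a Fredholm perturbation argument based on the resolvent identity.

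Self-adjointness gives $R(z)\in\mathbb B(L^2)$ for every $z\notin\sigma(H)$. Real interpolation on the Lorentz scale with second index $2$ (see Appendix \ref{appendix_interpolation}) between this trivial bound and $R(z):L^{p_0,2}\to L^{q_0,2}$ from Theorem \ref{theorem_KRS_1}, with $(p_0,q_0)$ sweeping \eqref{p_q}, produces $R(z):L^{p,2}\to L^{q,2}$ for every $(1/p,1/q)$ on a segment joining $(1/p_0,1/q_0)$ to $(1/2,1/2)$; a direct computation shows that the union of these segments is precisely the subregion of \eqref{corollary_KRS_2_1} where $p\le 2\le q$. The remaining pairs of \eqref{corollary_KRS_2_1}---where $1/p,1/q$ lie on the same side of $1/2$---I handle via the identity $R(z)=(I+R_0(z)V)^{-1}R_0(z)$. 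Since the free-resolvent kernel $G_z$ lies in $L^1(\R^n)$ for $z\in\C\setminus[0,\infty)$ (integrable local singularity when $n\ge 3$, exponential decay at infinity), $R_0(z)$ is bounded on every $L^{p,2}$. Combining Lorentz H\"older for $V\in L^{n/2,\infty}$ (giving $V:L^{p,2}\to L^{r,2}$ with $1/r=1/p+2/n$) with Proposition \ref{proposition_free_Sobolev_1} on the edge $1/r-1/p=2/n$ shows $R_0(z)V\in \mathbb B(L^{p,2})$ in the required range, and compactness follows from approximating $V\in L^{n/2,\infty}_0$ by compactly supported smooth potentials. Because $R(z)$ is already defined on $L^2$, a Fredholm argument promotes $(I+R_0(z)V)^{-1}$ to $\mathbb B(L^{p,2})$, completing the main estimate.

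The $D(H)\subset D(w)$ conclusion follows by applying the main estimate with $(p,q)=(2,2n/(n-2s))$, a pair that belongs to \eqref{corollary_KRS_2_1} for any $s\in[0,3/2)$. Fixing $z\notin\sigma(H)$, for $u\in D(H)$ the identity $u=R(z)((H-z)u)$ with $(H-z)u\in L^2$ gives $u\in L^{2n/(n-2s),2}$; Lorentz H\"older against $w\in L^{n/s,\infty}$, using $\tfrac12=\tfrac{s}{n}+\tfrac{n-2s}{2n}$, then yields $wu\in L^{2,2}=L^2$, i.e., $u\in D(w)$.

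The main obstacle is the perturbation step: real interpolation of Theorem \ref{theorem_KRS_1} with $R(z):L^2\to L^2$ alone only produces pairs with $p\le 2\le q$, because the convex hull of \eqref{p_q} together with $(1/2,1/2)$ lies in $\{1/p\ge 1/2\ge 1/q\}$ in the Riesz diagram. Reaching the remaining pairs of \eqref{corollary_KRS_2_1} genuinely requires the Fredholm inversion of $I+R_0(z)V$ on $L^{p,2}$-spaces lying outside the Kenig-Ruiz-Sogge range.
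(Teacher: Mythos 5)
Your reduction of $D(H)\subset D(w)$ to the pair $(p,q)=(2,\tfrac{2n}{n-2s})$ is the same as the paper's, but the main estimate itself is not established by your scheme: there are two concrete gaps. First, the ``direct computation'' in the interpolation step is false. Interpolating $R(z)\in\mathbb B(L^2)$ with $R(z):L^{p_0,2}\to L^{q_0,2}$ for $(p_0,q_0)$ in \eqref{p_q} only yields the points on the segments joining $(1/2,1/2)$ to the trapezium of Figure 1, and every such point (other than the apex) satisfies $\tfrac13<\bigl(\tfrac1p-\tfrac12\bigr)/\bigl(\tfrac12-\tfrac1q\bigr)<3$, because $\tfrac1{p_0}-\tfrac12,\ \tfrac12-\tfrac1{q_0}\in(\tfrac1{2n},\tfrac3{2n})$ on \eqref{p_q}. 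This cone is a strict subset of $\{1/p\ge 1/2\ge 1/q\}\cap\eqref{corollary_KRS_2_1}$; in particular the pairs $(2,\tfrac{2n}{n-2s})$ that your second paragraph relies on (ratio $0$) are \emph{not} reached, and they are also excluded from your Fredholm step by your own ``same side of $1/2$'' restriction. Second, the Fredholm step is only justified where it is not needed most: bounding $R_0(z)V$ on $L^{q,2}$ by H\"older into $L^{r,2}$, $\tfrac1r=\tfrac1q+\tfrac2n$, followed by Proposition \ref{proposition_free_Sobolev_1} on the edge forces $(r,q)$ to satisfy \eqref{p_q}, i.e.\ $q\in(\tfrac{2n}{n-1},\tfrac{2n}{n-3})$ — essentially the content of Propositions \ref{compactness} and \ref{proposition_weighted_0} already in the paper. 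For the pairs you assign to this step with both exponents below $2$, the auxiliary exponent $r$ leaves the range \eqref{p_q} altogether (and even leaves the Banach Lorentz scale, $r\le 1$, when $n\le 6$), so neither the compactness of $R_0(z)V$ nor the invertibility of $I+R_0(z)V$ on those $L^{q,2}$ is obtained; moreover the triviality of $\Ker(I+R_0(z)V)$ in such spaces would itself require a bootstrap to the $L^{q_s,2}$ scale where the exceptional-set theory lives, which you do not supply. Since every estimate your scheme produces has target exponent $\ge 2$, the whole low part of \eqref{corollary_KRS_2_1} (e.g.\ $p=q\in(\tfrac{2n}{n+3},2)$) is out of reach.

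The paper's route avoids both problems and you should compare with it: for $p=q\in(\tfrac{2n}{n+3},\tfrac{2n}{n+1})$ it uses the identity $R(z)=R_0(z)-R_0(z)VR(z)$, H\"older with $V\in L^{n/2,\infty}$, and the already proven bound $R(z):L^{p,2}\to L^{q_0,2}$ of Theorem \ref{theorem_KRS_1} on the line $\tfrac1p-\tfrac1{q_0}=\tfrac2n$ — no new inversion of $I+R_0(z)V$ is required; the high diagonal $p=q\in(\tfrac{2n}{n-1},\tfrac{2n}{n-3})$ then follows by duality from $R(z)^*=R(\overline z)$, the full diagonal by interpolation, and the remaining pairs by interpolating between the diagonal $\tfrac1p-\tfrac1q=0$ and the edge $\tfrac1p-\tfrac1q=\tfrac2n$. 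If you want to keep your structure, you must at minimum add the duality step (or an equivalent argument) to reach target exponents below $2$, and replace the false coverage claim for the interpolation step by running your Fredholm argument (valid for $q\in(\tfrac{2n}{n-1},\tfrac{2n}{n-3})$, i.e.\ Proposition \ref{proposition_weighted_0}) for the pairs $(p,q)$ with $p\le 2$ that the cone misses.
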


\begin{remark}
Since $L^{p}\hookrightarrow L^{p,2}$ and $L^{q,2}\hookrightarrow L^{q}$ if $p\le 2\le q$, one has $\mathbb B(L^{p,2},L^{q,2})\subset \mathbb B(L^p,L^q)$. Moreover, 
by virtue of real interpolation (see Theorem \ref{theorem_interpolation_2}), Theorem \ref{theorem_KRS_1}, Corollaries  \ref{corollary_KRS_1} and \ref{corollary_KRS_2} also hold with $L^{p,2}$ and $L^{q,2}$ replaced respectively by $L^{p,r}$ and $L^{q,r}$ for any $1\le r\le \infty$. 
\end{remark}

As explained in the introduction, the resolvent $R(z)$ has a close relation with the spectral measure $E_H$ associated with $H$ through Stone's formula
\begin{align}
\label{Stone}
E_H'(\lambda)=\frac{1}{2\pi i}\lim_{\ep\searrow0}\Big(R(\lambda+i\ep)-R(\lambda-i\ep)\Big),\ \lambda\in (0,\infty)\setminus\sigma_{\mathrm{p}}(H) 
\end{align}
where $E_H'(\lambda)=(dE_H/d\lambda)(\lambda)$ is the density of $E_H$. Using this formula and above theorems, we also obtain the following restriction type estimates. 

\begin{theorem}
\label{theorem_spectral_measure_1}
Assume that $\mathcal E\cap [0,\infty)=\emptyset$. Then, for any $\frac{2n}{n+3}<p\le \frac{2(n+1)}{n+3}$, 
\begin{align}
\label{theorem_spectral_measure_1_1}
\norm{E_H'(\lambda)}_{\mathbb B(L^p,L^{p'})}\le C\lambda^{\frac n2(\frac1p-\frac{1}{p'})-1},\quad\lambda>0. 
\end{align}
\end{theorem}



\begin{remark}
When $V\in L^p$ with $\frac n2\le p\le \frac{n+1}{2}$, the existence of $R(\lambda\pm i0)$ in $\mathbb B(L^{\frac{2(n+1)}{n+3}},L^{\frac{2(n+1)}{n-1}})$ for each $\lambda>0$ was proved by \cite{IoSc}. The uniform estimate \eqref{corollary_KRS_1_1} in the high energy regime $\lambda \ge \lambda_0>0$ was obtained by \cite{GoSc} for the case when $n=3$, $V\in L^{3/2}\cap L^{r}$ with $r>3/2$ and $(p,q)=(4/3,4)$. Recently, \eqref{corollary_KRS_1_1} for $\lambda>0$ and $(p,q)=(\frac{2(n+1)}{n-1},\frac{2(n+1)}{n+3})$ was proved by \cite{HYZ} provided that $V\in L^{n/2}\cap L^{n/2+\ep}$ and $0\notin\mathcal E$ (note that, in this case, $\mathcal E\cap(0,\infty)=\emptyset$ as in Lemma \ref{lemma_absence_1}). Compared with those previous literatures, main new contributions of Theorem \ref{theorem_KRS_1} and Corollary \ref{corollary_KRS_1} are threefold. At first, we obtain the uniform estimates \eqref{theorem_KRS_1_1} and \eqref{corollary_KRS_1_1} with respect to $z$ or $\lambda$ in both high and low energy regimes, under the condition $\mathcal E\cap[0,\infty)=\emptyset$. This is an important input to prove global-in-time Strichartz estimates without any low or high energy cut-off. 
Next, the full set of uniform Sobolev estimates is obtained, while the above previous references considered the case $1/p+1/q=1$ only. In particular, \eqref{theorem_KRS_1_1} and \eqref{corollary_KRS_1_1}  for $(p,q)$ away from the line $1/p+1/q=1$ seems to be new even under the condition \eqref{very_short_range}. Such ``off-diagonal" estimates play an important role in the proof of Strichartz estimates for non-admissible pairs and $L^p$-boundedness of the spectral multiplier $f(H)$ for a wider range of $p$ than that obtained by the ``diagonal" estimate on the line $1/p+1/q=1$ (see Sections \ref{section_Strichartz} and \ref{section_multiplier}, respectively). Finally, we obtain the above results for large critical potentials $V\in L^{n/2,\infty}_0$ without any additional regularity or repulsive condition. Concerning $L^p$-$L^q$ boundedness of $R(z)$ for each $z\in \C\setminus [0,\infty)$, a similar result as Corollary \ref{corollary_KRS_2} was previously obtained by Simon \cite{Sim} for Kato class potentials. However, to our best knowledge, this corollary seems to be new for the present class of potentials. 
\end{remark}

In this paper we also study several applications of the above resolvent estimates to the time-dependent problem, Harmonic analysis and spectral theory associated with $H$. 

We first consider global-in-time estimates for the Schr\"odinger equation \eqref{Cauchy}. Let $e^{-itH}$ be the unitary group generated by $H$ via Stone's theorem. For $F\in L^1_{\mathrm{loc}}(\R;L^2(\R^n))$, we define
$$
\Gamma_HF(t)=\int_0^t e^{-i(t-s)H}F(s)ds. 
$$
For $\psi\in L^2(\R^n)$ and $F\in L^1_{\mathrm{loc}}(\R;L^2(\R^n))$, a unique (mild) solution to \eqref{Cauchy} is then given by
\begin{align}
\label{Duhamel}
u=e^{-itH}\psi-i\Gamma_HF.
\end{align}
The next theorem generalize a result by \cite{BeKl} where the case when $|V(x)|\lesssim \<x\>^{-2-\ep}$  was considered. 

\begin{theorem}	
\label{theorem_smoothing_1}
Assume that $\mathcal E\cap[0,\infty)=\emptyset$. Then, for any $\rho>1/2$, 
$$
\norm{\<x\>^{-\rho}|D|^{1/2}e^{-itH}P_{\mathrm{ac}}(H)\psi}_{L^2_tL^2_x}\le C_\rho\norm{\psi}_{L^2_x},
$$
where $P_{\mathrm{ac}}(H)$ is the projection onto the absolutely continuous subspace associated with $H$. 
\end{theorem}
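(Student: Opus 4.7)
The plan is to invoke Kato's smoothing theorem, which reduces the desired space-time estimate to verifying the uniform weighted resolvent bound
\[
\sup_{z\in\C\setminus\R}\bignorm{\<x\>^{-\rho}|D|^{1/2}R(z)|D|^{1/2}\<x\>^{-\rho}P_{\ac}(H)}_{L^2\to L^2} < \infty.
\]
The verification would proceed in three steps.

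First, I would recall the classical Kato--Yajima half-derivative smoothing for $-\Delta$: for every $\rho>1/2$,
\[
\sup_{z\in\C\setminus\R}\bignorm{\<x\>^{-\rho}|D|^{1/2}R_0(z)|D|^{1/2}\<x\>^{-\rho}}_{L^2\to L^2}\le C_\rho.
\]

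Second, writing $A:=\<x\>^{-\rho}|D|^{1/2}$ and symmetrically factoring $V=|V|^{1/2}\sigma|V|^{1/2}$ with $\sigma=\sgn V$, I would combine the second resolvent identity $R(z)=R_0(z)-R_0(z)VR(z)$ with the Birman--Schwinger relation
\[
|V|^{1/2}R(z)=\bigl(I+|V|^{1/2}R_0(z)|V|^{1/2}\sigma\bigr)^{-1}|V|^{1/2}R_0(z)
\]
to obtain
\[
AR(z)A^*=AR_0(z)A^*-AR_0(z)|V|^{1/2}\sigma\bigl(I+|V|^{1/2}R_0(z)|V|^{1/2}\sigma\bigr)^{-1}|V|^{1/2}R_0(z)A^*.
\]

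Third, I would show the three factors
\[
AR_0(z)|V|^{1/2},\quad |V|^{1/2}R_0(z)A^*,\quad \bigl(I+|V|^{1/2}R_0(z)|V|^{1/2}\sigma\bigr)^{-1}
\]
are bounded on $L^2$ uniformly in $z\in\C\setminus\R$. The two mixed free-weighted terms would be controlled by combining the free smoothing of step one with the Lorentz version of Kato's inequality $\||V|^{1/2}|D|^{-1/2}\|_{L^2\to L^2}\lesssim \|V\|_{L^{n/2,\infty}}^{1/2}$ (a consequence of Sobolev--Lorentz embedding), which lets one symmetrically trade $|V|^{1/2}$ for $|D|^{1/2}$ and reduce back to the free smoothing. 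The Birman--Schwinger inverse would be handled by Fredholm theory: under $\mathcal E\cap[0,\infty)=\emptyset$ the operator $I+|V|^{1/2}R_0(\lambda\pm i0)|V|^{1/2}\sigma$ is invertible on $L^2$ for every $\lambda\in[0,\infty)$, and since $V\in L^{n/2,\infty}_0$ (the closure of $C_0^\infty$) ensures that $|V|^{1/2}R_0(z)|V|^{1/2}$ is compact on $L^2$, extends continuously up to $[0,\infty)$ via the limiting absorption principle of Corollary \ref{corollary_KRS_1}, and decays in norm as $|z|\to\infty$, a compactness-plus-continuity argument promotes pointwise invertibility to a uniform bound.

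The hard part will be the uniform bound on the Birman--Schwinger inverse across the whole half-line $[0,\infty)$, handling low, high and intermediate energies simultaneously, including the threshold $\lambda=0$. The compactness of $|V|^{1/2}R_0(z)|V|^{1/2}$ coming from $V\in L^{n/2,\infty}_0$ is essential here: without approximability by $C_0^\infty$ this operator would merely be bounded rather than compact, and the Fredholm-type argument that upgrades pointwise invertibility to uniform invertibility would break down.
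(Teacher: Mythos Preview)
Your overall architecture (resolvent identity + Birman--Schwinger inverse + free smoothing) is a legitimate alternative to the paper's Duhamel-based argument, but Step~3 contains a genuine error that breaks the proof as written.

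The inequality you invoke,
\[
\bignorm{|V|^{1/2}|D|^{-1/2}}_{L^2\to L^2}\lesssim \norm{V}_{L^{n/2,\infty}}^{1/2},
\]
is \emph{false} for $V\in L^{n/2,\infty}$. Sobolev--Lorentz embedding gives $\dot\H^{1/2}\hookrightarrow L^{2n/(n-1),2}$, so by H\"older the bound $\norm{wf}_{L^2}\lesssim\norm{|D|^{1/2}f}_{L^2}$ requires $w\in L^{2n,\infty}$. Since $|V|^{1/2}\in L^{n,\infty}$ only, what you actually get is $|V|^{1/2}|D|^{-1}\in\mathbb B(L^2)$, not $|V|^{1/2}|D|^{-1/2}$. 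Hence you cannot ``symmetrically trade $|V|^{1/2}$ for $|D|^{1/2}$'' and reduce $AR_0(z)|V|^{1/2}$ to the free Kato--Yajima bound $AR_0(z)A^*$. The mixed factor $AR_0(z)|V|^{1/2}$ \emph{is} uniformly bounded, but this requires a genuinely mixed free estimate
\[
\sup_{z\in\C\setminus\R}\bignorm{\<x\>^{-\rho}|D|^{1/2}R_0(z)}_{L^{\frac{2n}{n+2},2}\to L^2}<\infty,
\]
(the resolvent counterpart of \eqref{lemma_Strichartz_1_5}), after which H\"older with $|V|^{1/2}\in L^{n,\infty}$ closes the argument. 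This input is of a different nature from the pure half-derivative smoothing and must be supplied separately.

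A secondary gap: your decomposition is for $R(z)$, not $P_{\ac}(H)R(z)$. If $H$ has negative eigenvalues (which is allowed here), the Birman--Schwinger inverse $(I+|V|^{1/2}R_0(z)|V|^{1/2}\sigma)^{-1}$ blows up as $z$ approaches any $\lambda_j\in\sigma_{\mathrm d}(H)$, so the claimed uniform bound over all of $\C\setminus\R$ fails. You must either insert $P_{\ac}(H)$ explicitly (as in Theorem~\ref{theorem_KRS_2}) and handle the discrete part separately via Lemma~\ref{lemma_KRS_4}, or restrict the supersmoothness criterion to a strip around $[0,\infty)$ and use $\norm{P_{\ac}(H)R(z)}_{\mathbb B(L^2)}\le\dist(z,[0,\infty))^{-1}$ elsewhere. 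The paper sidesteps both issues by working at the propagator level: it uses the Duhamel identity \eqref{lemma_Duhamel_1_1}, the free estimates \eqref{lemma_Strichartz_1_4}--\eqref{lemma_Strichartz_1_5}, and the $H$-smoothness of $|V|^{1/2}$ on $\Ran P_{\ac}(H)$ coming from Theorem~\ref{theorem_KRS_2}.
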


To state the result on Strichartz estimates, we recall a standard notation. 
\begin{definition}						
\label{definition_admissible}
When $n\ge3$, a pair $(p,q)\in \R^2$ is said to be admissible if 
\begin{align}
\label{admissible}
p,q\ge2,\quad 2/p=n(1/2-1/q).
\end{align}
\end{definition}

\begin{theorem}	
\label{theorem_Strichartz_1}
Suppose that $\mathcal E\cap[0,\infty)=\emptyset$. Then, for any admissible pairs $(p_1,q_1)$ and $(p_2,q_2)$, the solution $u$ to \eqref{Cauchy} satisfies
\begin{align}
\label{theorem_Strichartz_1_1}
\norm{P_{\mathrm{ac}}(H) u}_{L^{p_1}_tL^{q_1}_x}
\lesssim \norm{\psi}_{L^2}+ \norm{F}_{L^{p_2'}_tL^{q_2'}_x},\quad \psi\in L^2,\ F\in L^{p_2'}_tL^{q_2'}_x. 
\end{align} 
For any $\frac{n}{2(n-1)}\le s\le \frac{3n-4}{2(n-1)}$, we also obtain non-admissible inhomogeneous Strichartz estimates:
\begin{align}
\label{theorem_Strichartz_1_2}
\norm{\Gamma_HP_{\mathrm{ac}}(H) F}_{L^2_tL^{\frac{2n}{n-2s}}_x}
\lesssim \norm{F}_{L^2_tL^{\frac{2n}{n+2(2-s)}}_x},\quad F\in L^{2}_tL^{\frac{2n}{n+2(2-s)}}_x. 
\end{align}
\end{theorem}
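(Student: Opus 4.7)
My plan is to split the proof into the admissible bound \eqref{theorem_Strichartz_1_1} and the non-admissible bound \eqref{theorem_Strichartz_1_2}. The common ingredient will be a multiplicative-weight Kato smoothing estimate: for $w=|V|^{1/2}\in L^{n,\infty}$, I will apply Theorem~\ref{theorem_KRS_1} with $(p,q)=(\frac{2n}{n+2},\frac{2n}{n-2})$ together with H\"older's inequality for Lorentz spaces to get $\sup_{z\in\C^\pm}\|wR(z)w\|_{L^2\to L^2}<\infty$. The hypothesis $\mathcal E\cap[0,\infty)=\emptyset$ ensures this bound is uniform up to the absolutely continuous spectrum, while $P_{\mathrm{ac}}(H)$ absorbs the poles at the finitely many negative eigenvalues; Kato's smoothness theorem then yields
\[
\|we^{-itH}P_{\mathrm{ac}}(H)\psi\|_{L^2_tL^2_x}\lesssim\|\psi\|_{L^2_x}.
\]

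For \eqref{theorem_Strichartz_1_1} I will factor $V=wUw$ with $U=\sgn(V)\in L^\infty$ and expand by Duhamel. The free piece is controlled by the Keel--Tao Strichartz estimate; the Duhamel correction is treated via the hybrid bound
\[
\Bigl\|\int_\R e^{i(t-s)\Delta}wg(s)\,ds\Bigr\|_{L^{p_1}_tL^{q_1}_x}\lesssim\|g\|_{L^2_tL^2_x},
\]
a $TT^*$-consequence of free Strichartz and free smoothing, applied to $g=Uwe^{-isH}P_{\mathrm{ac}}(H)\psi$, whose $L^2_tL^2_x$-norm is bounded by $\|\psi\|_{L^2}$ via the perturbed smoothing above. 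The retarded truncation will be handled by the Christ--Kiselev lemma when $p_1>2$, and by Keel--Tao's bilinear argument at the endpoint $(p_1,q_1)=(2,\frac{2n}{n-2})$ adapted to the perturbed flow. The full inhomogeneous admissible estimate will then follow by composing the homogeneous bound with its dual.

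For \eqref{theorem_Strichartz_1_2} I set $\tilde p=\frac{2n}{n+4-2s}$ and $q=\frac{2n}{n-2s}$: a direct computation gives $1/\tilde p-1/q=2/n$, and the stated range of $s$ puts $(\tilde p,q)$ inside the range \eqref{p_q}. Theorem~\ref{theorem_KRS_1}, together with the residue-killing property of $P_{\mathrm{ac}}(H)$ at the negative eigenvalues, yields the uniform resolvent bound
\[
\sup_{\tau\in\R}\|R(\tau+i0)P_{\mathrm{ac}}(H)\|_{L^{\tilde p,2}\to L^{q,2}}<\infty.
\]
The plan is to upgrade this to \eqref{theorem_Strichartz_1_2} via the Fourier-in-time representation $\widehat{\Gamma_HP_{\mathrm{ac}}(H)F}(\tau)=-iR(\tau+i0)P_{\mathrm{ac}}(H)\hat F(\tau)$ and a two-weight Kato-smoothness argument, using an off-diagonal factorization $V=v_1v_2$ with $v_i\in L^{a_i,\infty}$, $1/a_1=1/2-1/q$ and $1/a_2=1/\tilde p-1/2$ (so $1/a_1+1/a_2=2/n$, consistent with $V\in L^{n/2,\infty}$).

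The hard part will be this final step: since $L^{\tilde p}$ and $L^q$ are non-Hilbertian for $\tilde p,q\neq 2$, vector-valued Plancherel in time is unavailable, so transferring the $L^2$-based uniform bound into the Strichartz estimate in $L^2_tL^q_x$ requires a delicate bilinear $TT^*$ argument, most likely combined with real interpolation against the admissible endpoint $s=1$ (which sits inside \eqref{theorem_Strichartz_1_1}). The endpoint pair $(2,\frac{2n}{n-2})$ in \eqref{theorem_Strichartz_1_1} itself will similarly require the Keel--Tao bilinear machinery to be adapted to the perturbed flow.
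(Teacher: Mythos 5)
Your treatment of the admissible estimate \eqref{theorem_Strichartz_1_1} is a valid alternative route and close in spirit to the paper's proof. The paper does not invoke Christ--Kiselev; instead it works with the Duhamel identities written in the form-pairing $\<\cdot,\cdot\>_T$ (Lemma~\ref{lemma_Duhamel_1}), so the retarded structure is built in, and uniformity in $T$ together with the known retarded endpoint free estimate (Keel--Tao) then yields \eqref{proof_theorem_Strichartz_1_0} directly. Both routes converge on the same crucial input, namely the single-weight $L^2_tL^2_x$ smoothing bound for $e^{-itH}P_{\mathrm{ac}}(H)$, which you correctly derive from Theorem~\ref{theorem_KRS_1} (really Theorem~\ref{theorem_KRS_2}, since $P_{\mathrm{ac}}(H)$ must be inserted to uniformize across the discrete spectrum) and Kato's smooth perturbation theory. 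So on this half you are essentially doing a standard Rodnianski--Schlag argument with Christ--Kiselev where the paper uses a duality formulation; both are sound.

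For the non-admissible estimate \eqref{theorem_Strichartz_1_2} you correctly compute the exponents and isolate the right uniform resolvent bound, and you correctly diagnose that a naive Fourier-in-time transfer fails because $L^{q_s}_x$ is not Hilbertian. But you then stop at exactly the point where the real work begins, and the fix you sketch -- a bilinear $TT^*$ argument plus real interpolation against the admissible case $s=1$ -- would not close the gap. All the target estimates \eqref{theorem_Strichartz_1_2} are $L^2_t \to L^2_t$ in time, which is already the endpoint time exponent; interpolating \eqref{theorem_Strichartz_1_2} at $s=1$ against any other available estimate (say the trivial $L^1_tL^2_x\to L^\infty_tL^2_x$) moves the time exponents off of $(2,2)$ and cannot reproduce the off-diagonal range $s\in[\tfrac{n}{2(n-1)},\tfrac{3n-4}{2(n-1)}]$. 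The paper's resolution is to avoid transferring the resolvent bound into $L^q_x$-valued Fourier analysis at all. One writes $\Gamma_H=\Gamma_0-i\Gamma_0 V\Gamma_H$ in the $\<\cdot,\cdot\>_T$ sense, applies the \emph{free retarded non-admissible} inhomogeneous Strichartz estimates of Foschi--Vilela--Koh--Seo (\eqref{lemma_Strichartz_1_2}--\eqref{lemma_Strichartz_1_3}) to the $\Gamma_0$ pieces, factors $V$ asymmetrically as $V_1V_2$ with $V_1\in L^{n/s,\infty}_0$, $V_2\in L^{n/(2-s),\infty}_0$, and reduces the perturbation term to a purely $L^2_tL^2_x$ double-weight smoothing estimate for $\Gamma_H P_{\mathrm{ac}}(H)$ (Lemma~\ref{lemma_Strichartz_3}). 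That lemma \emph{is} proved by the D'Ancona Laplace-transform/Parseval argument you gesture at, but crucially in the $L^2$-based setting where Parseval is available; and finally a bootstrap (splitting $V_1=V_{1,j}+(V_1-V_{1,j})$ with $V_{1,j}\in C_0^\infty$ and absorbing the small remainder into the left-hand side) closes the estimate uniformly in $T$. These three ingredients -- the free non-admissible retarded estimates as a black box, the asymmetric factorization of $V$, and the bootstrap absorption -- are the missing content of your proposal.
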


\begin{remark}
For the admissible case or the case when $\frac{n}{2(n-1)}<s<\frac{3n-4}{2(n-1)}$ we can actually obtain stronger estimates
\begin{align*}
\norm{P_{\mathrm{ac}}(H) u}_{L^{p_1}_tL^{q_1,2}_x}
&\lesssim \norm{\psi}_{L^2}+\norm{F}_{L^{p_2'}_tL^{q_2',2}_x},\\
\norm{\Gamma_HP_{\mathrm{ac}}(H) F}_{L^2_tL^{\frac{2n}{n-2s},2}_x}
&\lesssim \norm{F}_{L^2_tL^{\frac{2n}{n+2(2-s)},2}_x},\quad\frac{n}{2(n-1)}< s< \frac{3n-4}{2(n-1)},
\end{align*}
than \eqref{theorem_Strichartz_1_1} and \eqref{theorem_Strichartz_1_2}. 
Inhomogeneous estimates for some other non-admissible pairs may be also deduced from \eqref{theorem_Strichartz_1_2} and usual inhomogeneous estimates. For instance, if we interpolate between \eqref{theorem_Strichartz_1_2} and the trivial estimate $\norm{\Gamma_HP_{\mathrm{ac}}(H)F}_{L^\infty_tL^2_x}\le \norm{F}_{L^1_tL^2_x}$ then
$$
\norm{\Gamma_HP_{\mathrm{ac}} F}_{L^p_tL^q_x}
\lesssim \norm{F}_{L^{\tilde p'}_tL^{\tilde q'}_x},
$$
where $\frac{n}{2(n-1)}\le s\le \frac{3n-4}{2(n-1)}$ and 
$
\frac ns(\frac12-\frac1q)=\frac2p=\frac{2}{\tilde p}=\frac{n}{2-s}(\frac12-\frac{1}{\tilde q})$. Inhomogeneous Strichartz estimates with non-admissible pairs for the free Schr\"odinger equation have been studied by several authors \cite{Kat3,KeTa,Fos,Vil,KoSe} under suitable conditions on $(p,q)$ (see \cite{Fos,KoSe}). The estimates \eqref{theorem_Strichartz_1_2} correspond to the endpoint cases for such conditions.  It is also worth noting that, as well as the estimates for admissible pairs, non-admissible estimates can be used in the study of nonlinear Schr\"odinger equations (see \cite{Kat3}). 
\end{remark}

\begin{remark}
There is a vast literature on Strichartz estimates for Schr\"odinger equations with potentials. We refer to \cite{RoSc,Gol,Bec,BoMi} and reference therein. We also note that the dispersive ($L^1$-$L^\infty$) estimate for $e^{-itH}P_{\mathrm{ac}}(H)$ and $L^p$-boundedness of wave operators $W_{\pm}$, which imply Strichartz estimates, have been also extensively studied (see \cite{RoSc,BeGo,Yaj1,Bec2} and reference therein). In particular, Goldberg \cite{Gol} proved the endpoint Strichartz estimates for $e^{-itH}P_{\mathrm{ac}}$ under the conditions that $V\in L^{n/2}$, $0\notin \mathcal E$ and $n\ge3$. When $n=3$, Strichartz estimates for all admissible cases and some non-admissible cases (which are different from \eqref{theorem_Strichartz_1_2}) for $V\in L^{3/2,\infty}_0$ were obtained by Beceanu \cite{Bec}. Compared with those previous literatures, a new contribution of this theorem is that we obtain the full set of admissible Strichartz estimates \eqref{theorem_Strichartz_1_1} including the inhomogeneous double endpoint case for all $n\ge3$. Moreover, non-admissible estimates \eqref{theorem_Strichartz_1_2} are new even for $V\in L^{n/2}$. 
\end{remark}

The next application of resolvent estimates in this paper is the $L^p$-boundedness of the spectral multiplier $F(H)$, which is defined by the spectral decomposition theorem, namely
$$
F(H)=\int_{\sigma(H)}F(\lambda)dE_H(\lambda),
$$
For the free case $H=-\Delta$, H\"ormander's multiplier theorem \cite{Hor} implies that if $F\in L^\infty$ satisfies
\begin{align}
\label{assumption_multiplier}
\sup_{t>0}\norm{\psi(\cdot)F(t\cdot)}_{\H^\beta}<\infty
\end{align}
with some nontrivial $\psi\in C_0^\infty(\R)$ supported in $(0,\infty)$ and $\beta>n/2$, then $F(-\Delta)$ is bounded on $L^p$ for all $1<p<\infty$. The following theorem is a generalization of this result to non-negative Schr\"odinger operators with scaling-critical potentials. 

\begin{theorem}	
\label{theorem_multiplier_1}
Suppose that $\mathcal E\cap[0,\infty)=\emptyset$ and $H\ge0$. Then, for any $F\in L^\infty(\R)$ satisfying \eqref{assumption_multiplier} with some nontrivial $\psi\in C_0^\infty(\R)$ supported in $(0,\infty)$ and $\beta>3/2$, $F(\sqrt H)$ is bounded on $L^p$ for all $2n/(n+3)<p<2n/(n-3)$ and satisfies
\begin{align}
\label{theorem_multiplier_1_1}
\norm{F(\sqrt H)}_{\mathbb B(L^p)}\le C(\sup_{t>0}\norm{\psi(\cdot)F(t\cdot)}_{\H^\beta}+|F(0)|). 
\end{align}
\end{theorem}

It is easy to check that $F$ satisfies \eqref{assumption_multiplier} if and only if $G(\lambda)=F(\lambda^2)$ does. Therefore, \eqref{theorem_multiplier_1_1} also holds with $F(\sqrt{H})$ replaced by $F(H)$. Also note that, in the proof of this theorem, the restriction estimates \eqref{theorem_spectral_measure_1_1} will play an essential role and the restriction for the range of $p$ when $n\ge4$ is due to the condition $p>\frac{2n}{n+3}$ for \eqref{theorem_spectral_measure_1_1}. 

\begin{remark}
Some applications of Theorem \ref{theorem_multiplier_1} will be also established (see Section \ref{section_multiplier}). At first we obtain the equivalence between Sobolev norms $\norm{(-\Delta)^{s/2}u}_{L^2}$ and $\norm{H^{s/2}u}_{L^2}$  for $0\le s<3/2$. Secondly, we shall prove square function estimates for the Littlewood-Paley decomposition via the spectral multiplier associated with $H$ . These are known to play an important role in the study of nonlinear Schr\"odinger equations with potentials (see, {\it e.g.}, \cite{KMVZZ}). 
\end{remark}

\begin{remark}
If the Schr\"odinger semigroup $e^{-tH}$ satisfies the Gaussian estimate or some generalized Gaussian type estimates, then H\"ormander's multiplier theorem for $F(H)$ have been extensively studied (see \cite{COSY} and reference therein). Compared with such cases, the interest of Theorem \ref{theorem_multiplier_1} is that we obtain H\"ormander's multiplier theorem under a scaling-critical condition $V\in L^{n/2,\infty}_0$, while it is not known for such a class of potentials whether $H$ satisfies (generalized) Gaussian estimates or not, even if $H$ is assumed to be non-negative. 
\end{remark}

\begin{remark}
\label{remark_multiplier_1}
To ensure the non-negativity of $H$, it suffices to assume $\norm{V_-}_{L^{n/2,\infty}}\le S_n^{-1}$, where $V_-=\max\{0,-V\}$ is the negative part of $V$ and $$S_n:=\frac{n(n-2)}{4}2^{\frac2n}\pi^{1+1/n}\Gamma\Big(\frac{n+1}{2}\Big)^{-\frac2n}$$ is the best constant in Sobolev's inequality. $\norm{f}_{L^{\frac{2n}{n-2}}}\le S_n\norm{\nabla f}_{L^2}$. 
Moreover, if $\norm{V_-}_{L^{n/2}}< S_n^{-1}$ then $0\notin \mathcal E$ by Lemma \ref{lemma_absence_1}. 
\end{remark}

The last application of Theorem \ref{theorem_KRS_1} in the paper is the Keller type inequality for individual eigenvalues of a non-self-adjoint Schr\"odinger operator. Let $0<\gamma<\infty$ and $W\in L^{n/2+\gamma}(\R^n;\C)$ a possibly complex-valued potential. Then $W$ is $H$-form compact and we define the operator $H_W=H+W$ as a form sum. Under this setting, it is known that $\sigma(H_W)$ is contained in a sector $\{z\in \C\ |\ |\arg(z-z_0)|\le \theta\}$ with some $z_0\in \R$ and $\theta\in[0,\pi/2)$ (see \cite{Kat}), but the point spectrum $\sigma_{\mathrm{p}}(H_W)$ could be unbounded in $\C$ in general even if $V\equiv0$ and $W$ is smooth. 
The following theorem, however, shows that this is not the case if $0<\gamma\le1/2$. 

\begin{theorem}
\label{theorem_EB_1}
Let $\delta>0$. If $0<\gamma\le1/2$, any eigenvalue $E\in \C\setminus\mathcal E_\delta$ of $H_W$ satisfies
\begin{align}
\label{theorem_EB_1_1}
|E|^{\gamma}\le C_{\gamma,\delta}\norm{W}_{L^{\frac n2+\gamma}}^{\frac n2+\gamma}. 
\end{align}
Moreover, if $\gamma>1/2$, any eigenvalue $E\in \C\setminus\mathcal E_\delta$ of $H_W$ satisfies
\begin{align}
\label{theorem_EB_1_2}
|E|^{1/2}\dist(E,[0,\infty))^{\gamma-1/2}\le C_{\gamma,\delta}\norm{W}_{L^{\frac n2+\gamma}}^{\frac n2+\gamma}. 
\end{align}
Here the constant $C_{\gamma,\delta}=C({\gamma,\delta,n,V})>0$ may be taken uniformly in $W$. 
\end{theorem}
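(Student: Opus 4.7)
The strategy is a Birman--Schwinger reduction combined with the uniform Sobolev estimate of Theorem \ref{theorem_KRS_1}, in the spirit of Frank's approach \cite{Fra1} to Keller-type inequalities for complex potentials. Write $W=U|W|$ for its polar decomposition and put $W_1=U|W|^{1/2}$, $W_2=|W|^{1/2}$, so $W=W_1W_2$. If $E\in\C\setminus\sigma(H)$ is an eigenvalue of $H_W$ with eigenfunction $\psi$, then $(H-E)\psi=-W\psi$ shows that $\phi:=W_2\psi$ is a nonzero $L^2$-fixed point of $-W_2R(E)W_1$, yielding the Birman--Schwinger bound $1\le\|W_2R(E)W_1\|_{L^2\to L^2}$.

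Set $r=n/2+\gamma$ and $(p,q)=(2r/(r+1),\,2r/(r-1))$, so $1/p-1/q=1/r=2/(n+2\gamma)$. H\"older's inequality in Lorentz spaces (Appendix \ref{appendix_interpolation}) gives $\|W_1\|_{L^2\to L^{p,2}},\,\|W_2\|_{L^{q,2}\to L^2}\le C\|W\|_{L^r}^{1/2}$, hence
\[
1\le C\|W\|_{L^r}\cdot\|R(E)\|_{L^{p,2}\to L^{q,2}}.
\]
When $0<\gamma\le 1/2$, a direct check confirms that this pair $(p,q)$ satisfies \eqref{p_q}, so Theorem \ref{theorem_KRS_1} furnishes $\|R(E)\|_{L^{p,2}\to L^{q,2}}\le C_\delta|E|^{n/(2r)-1}=C_\delta|E|^{-2\gamma/(n+2\gamma)}$, and raising the resulting inequality to the power $r=(n+2\gamma)/2$ immediately produces \eqref{theorem_EB_1_1}.

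For $\gamma>1/2$ the exponent $1/r$ falls strictly below the KRS endpoint $2/(n+1)$, so $(p,q)$ sits outside the range \eqref{p_q}. The plan is to interpolate, either by Riesz--Thorin or via Stein's theorem applied to the analytic family $|W|^{z/2}R(E)U|W|^{z/2}$, between the endpoint bound $\|R(E)\|_{L^{p_0,2}\to L^{q_0,2}}\lesssim_\delta|E|^{-1/(n+1)}$ on the line $1/p_0-1/q_0=2/(n+1)$ (furnished by Theorem \ref{theorem_KRS_1}) and the spectral $L^2\to L^2$ bound of the resolvent, with interpolation parameter $\theta=(n+1)/(n+2\gamma)\in(0,1]$ chosen so that the intermediate pair is precisely $(p,q)$. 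The expected outcome is
\[
\|R(E)\|_{L^{p,2}\to L^{q,2}}\lesssim_{V,\delta}|E|^{-1/(n+2\gamma)}\dist(E,[0,\infty))^{-(2\gamma-1)/(n+2\gamma)},
\]
which, combined with the Birman--Schwinger inequality and raised to the $r$-th power, delivers \eqref{theorem_EB_1_2}.

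The principal obstacle is the precise form of the $L^2\to L^2$ leg: the spectral theorem only gives $\|R(E)\|_{L^2\to L^2}=\dist(E,\sigma(H))^{-1}$, which differs from $\dist(E,[0,\infty))^{-1}$ precisely when $E$ lies closer to a negative discrete eigenvalue of $H$ than to the essential spectrum. The remedy is to split $R(E)=R_\ac(E)+R_{\mathrm{pp}}(E)$ along the orthogonal projections onto the absolutely continuous and pure-point subspaces of the self-adjoint $H$. Since $\sigma_\ac(H)=[0,\infty)$ one has $\|R_\ac(E)\|_{L^2\to L^2}\le\dist(E,[0,\infty))^{-1}$, while $R_{\mathrm{pp}}(E)$ is a finite sum of rank-one operators whose Birman--Schwinger contribution $\|W_2R_{\mathrm{pp}}(E)W_1\|_{L^2\to L^2}$ is controlled by $C(V,\delta)\|W\|_{L^r}$, using the $L^p$-regularity of the finitely many bound-state eigenfunctions of $H$ together with $\dist(E,\sigma_{\mathrm{d}}(H))\ge\delta$. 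Interpolating for the absolutely continuous piece and absorbing the finite-rank piece into the constant then accounts for the dependence of $C_{\gamma,\delta}$ on $V$ and closes the argument.
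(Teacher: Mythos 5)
Your overall route is the paper's: the Birman--Schwinger reduction, the uniform Sobolev estimate \eqref{theorem_KRS_1_1} on the duality line $1/p+1/q=1$ with $1/p-1/q=2/(n+2\gamma)$ for $0<\gamma\le 1/2$, and, for $\gamma>1/2$, interpolation between the $(p_{1/2},p_{1/2}')$ endpoint of \eqref{theorem_KRS_1_1} and an $L^2\to L^2$ resolvent bound; your exponent bookkeeping is correct in both regimes. The genuine gap is in how you handle the $L^2$ endpoint for $\gamma>1/2$. Theorem \ref{theorem_EB_1} makes no hypothesis such as $0\notin\mathcal E$ or $\mathcal E\cap[0,\infty)=\emptyset$, so the point spectrum of $H$ need not be finite: negative eigenvalues may accumulate at $0$, embedded eigenvalues are allowed, and a singular continuous component is not a priori excluded. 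Thus the claim that $R_{\mathrm{pp}}(E)$ is a \emph{finite} sum of rank-one operators is unjustified (the finiteness discussion preceding Lemma \ref{lemma_KRS_4} is carried out only under $0\notin\mathcal E$), and $P_{\mathrm{ac}}+P_{\mathrm{pp}}=I$ may fail. Even granting finitely many eigenvalues, ``absorbing the finite-rank piece into the constant'' does not close the argument: from $1\le\norm{W_2R_{\mathrm{ac}}(E)W_1}+\norm{W_2R_{\mathrm{pp}}(E)W_1}$, the alternative $\norm{W_2R_{\mathrm{pp}}(E)W_1}\ge 1/2$ only yields $\norm{W}_{L^{n/2+\gamma}}\gtrsim 1$, which gives no control of $|E|^{1/2}\dist(E,[0,\infty))^{\gamma-1/2}$; you would have to show the pp contribution itself decays like $|E|^{-\frac{1}{n+2\gamma}}\dist(E,[0,\infty))^{-\frac{2\gamma-1}{n+2\gamma}}$ (using $|E-\lambda_j|\gtrsim_{\delta,V}\max\{|E|,\dist(E,[0,\infty)),1\}$ together with the eigenfunction bounds of Lemma \ref{lemma_KRS_4}). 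In addition, interpolating ``for the absolutely continuous piece'' requires the endpoint bound $\norm{R(E)P_{\mathrm{ac}}(H)}_{\mathbb B(L^{p_0,2},L^{q_0,2})}\lesssim_\delta |E|^{-1/(n+1)}$, which Theorem \ref{theorem_KRS_1} does not provide (Theorem \ref{theorem_KRS_2} concerns $P_{\mathrm{ac}}(H)R(z)$, only on the line $1/p-1/q=2/n$, and only under $\mathcal E\cap[0,\infty)=\emptyset$).

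The correct resolution of the difficulty you flagged is much simpler and requires no spectral splitting: since $\sigma(H)\setminus[0,\infty)=\sigma_{\mathrm{d}}(H)\subset\mathcal E$ and $\mathcal E$ is bounded (Remark \ref{remark_weighted_4}), for $E\notin\mathcal E_\delta$ one has $\dist(E,\sigma(H))\ge c(\delta,V)\,\dist(E,[0,\infty))$ (if the nearest spectral point is a negative eigenvalue $\lambda_j$, then $|E-\lambda_j|\ge\delta$ and $\dist(E,[0,\infty))\le|E|\le|E-\lambda_j|+M_0\le(1+M_0/\delta)|E-\lambda_j|$ with $M_0=\sup|\mathcal E|$), hence $\norm{R(E)}_{\mathbb B(L^2)}=\dist(E,\sigma(H))^{-1}\le C_{\delta,V}\dist(E,[0,\infty))^{-1}$, and one interpolates $R(E)$ itself exactly as in your computation; this is what makes the paper's ``trivial bound'' legitimate and accounts for the $V$-dependence of $C_{\gamma,\delta}$. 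Finally, your Birman--Schwinger step is stated only for $E\notin\sigma(H)$, whereas the theorem also covers embedded eigenvalues $E\in[0,\infty)\setminus\mathcal E_\delta$ (only relevant for $0<\gamma\le1/2$, since \eqref{theorem_EB_1_2} is vacuous on $[0,\infty)$); for those one needs the boundary-value Birman--Schwinger principle with $R(E+i0)$, as in the second part of Lemma \ref{lemma_EB_1}, which rests on the limiting absorption principle of Corollary \ref{corollary_KRS_1}.
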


\begin{remark}
Theorem \ref{theorem_EB_1} implies the following spectral consequence. If $0<\gamma\le1/2$ then$$
\sigma_{\mathrm{p}}(H_W)\subset \mathcal E_\delta\cup\Big\{z\in \C\ \Big|\ |z|^\gamma\le C_{\gamma,\delta}\norm{W}_{L^{\frac n2+\gamma}}^{\frac n2+\gamma}\Big\}
$$
In particular, since $\mathcal E$ is bounded in $\R$ (see Remark \ref{remark_weighted_4}), $\sigma_{\mathrm{p}}(H_W)$ is bounded in $\C$. On the other hand, if $\gamma>1/2$ and $\Re E>0$, then $E$ satisfies
$$|\Im E|\le C_{\gamma,\delta} |E|^{-\frac{1}{2(\gamma-1/2)}}\norm{W}_{L^{\frac n2+\gamma}}^{\frac{n+2\gamma}{{2\gamma-1}}}.$$ This implies that, for any sequence $\{E_j\}\subset\sigma_{\mathrm{p}}(H_W)\setminus[0,\infty)$ satisfying $\Re E_j\to +\infty$ as $j\to\infty$, we have $|\Im E_j|\to 0$ as $j\to\infty$. 
\end{remark}

\begin{remark}
For a complex potential $W(x)$, the estimates \eqref{theorem_EB_1_1} and \eqref{theorem_EB_1_2} were firstly proved by Frank \cite{Fra1,Fra2} for the case when $-\Delta+W(x)$ and then extended to the operator $-\Delta-a|x|^{-2}+W(x)$ with $a\le (n-2)-2/4$ by \cite{Miz1}. In both cases, the free Hamiltonians $-\Delta$ and $-\Delta-a|x|^{-2}$ are non-negative and purely absolutely continuous. Theorem \ref{theorem_EB_1} shows that the same result still holds even if the free Hamiltonian has (embedded) eigenvalues or resonances. 
\end{remark}

The rest of the paper is devoted to the proof of above results. We here outline the plan of the paper, describing rough idea of proofs. Following the classical scheme, the proof of uniform Sobolev estimates is based on the resolvent identity 
$
R(z)=(I+R_0(z)V)^{-1}R_0(z). 
$

In Section \ref{section_preliminaries} we collect several properties on the free resolvent $R_0(z)$ used throughout the paper and, then, study basic properties of the exceptional set $\mathcal E$. In particular, we show that $R_0(z)V$ extends to a $\mathbb B_\infty(L^q)$-valued continuous function on $\overline{\C^+}$. This fact plays an important role to justify the above resolvent identity. The proof of Lemma \ref{lemma_absence_1} is also given in Section \ref{section_preliminaries}. 

Using materials prepared in Section \ref{section_preliminaries} and the Fredholm alternative theorem, we prove Theorem \ref{theorem_KRS_1}, Corollaries \ref{corollary_KRS_1} and \ref{corollary_KRS_2} and Theorem \ref{theorem_spectral_measure_1} in Section \ref{section_uniform_Sobolev}. 

Section \ref{section_Strichartz} is devoted to proving Theorems \ref{theorem_smoothing_1} and  \ref{theorem_Strichartz_1}. The proof follows an abstract scheme by \cite{RoSc} (see also \cite{BPST2,BoMi}) which is based on Duhamel's formulas
$$
e^{-itH}=e^{it\Delta}-i\Gamma_0V\Gamma_H,\quad \Gamma_H=\Gamma_0-i\Gamma_0V\Gamma_H,
$$
where $\Gamma_0=\Gamma_{-\Delta}$. Using these identities, the proof can be reduced to that of corresponding estimates for the free propagators $e^{it\Delta}$ and $\Gamma_0$ which are well known, and $L^2_tL_x^2$ estimates for $V_1e^{-itH}P_{\mathrm{ac}}(H)$ and $V_1\Gamma_HP_{\mathrm{ac}}(H)V_2$ with a suitable decomposition $V=V_1V_2$. Kato's smooth perturbation theory \cite{Kat} allows us to deduce such $L^2_tL^2_x$-estimates from the resolvent estimate 
$$
\sup_{z\in \C\setminus\R}\norm{V_1R(z)P_{\mathrm{ac}}(H)V_2}_{\mathbb B(L^2)}<\infty,
$$
which follows from uniform Sobolev estimates for $P_{\mathrm{ac}}(H)R(z)$ (which are also proved as a corollary of Theorem \ref{theorem_KRS_1} in the end of Section \ref{section_uniform_Sobolev}) and H\"older's inequality. A rigorous justification of the above Duhamel's formulas in the sense of forms are also given in Section  \ref{section_Strichartz}. 

Proofs of the spectral multiplier theorem and its applications are given in Section \ref{section_multiplier}. The proof of Theorem \ref{theorem_multiplier_1} employs an abstract method by \cite{COSY} which allows us to deduce Theorem \ref{theorem_multiplier_1} from the restriction estimates \eqref{theorem_spectral_measure_1_1}  and the so-called Davies-Gaffney estimate for the Schr\"odinger semigroup $e^{-tH}$. In the proof of the Davies-Gaffney estimate, we use the condition that $H$ is non-negative. 

Section \ref{section_EB} is devoted to the proof of Theorem \ref{theorem_EB_1}, which follows  basically the same line as in \cite{Fra1,Fra2} and is based on the estimates \eqref{theorem_KRS_1_1},  \eqref{corollary_KRS_1_1} and the Birman-Schwinger principle. 

Appendix \ref{appendix_interpolation} is devoted to a brief introduction of real interpolation and Lorentz spaces. 
\\\\
\noindent{\bf Acknowledgements.} The author would like to express his sincere gratitude to Kenji Nakanishi and Jean-Marc Bouclet for valuable discussions. He is partially supported by JSPS KAKENHI Grant Numbers JP25800083 and JP17K14218. 

\section{Preliminaries}
\label{section_preliminaries}
In this section we first study  several properties of the free resolvent, which will often appear in the sequel. The second part is devoted to a detail study of the exceptional set of $H$. 

\subsection{The free resolvent}
\label{subsection_LAP}
For $z\notin \C\setminus[0,\infty)$, $R_0(z)=(-\Delta-z)^{-1}$ denotes the free resolvent, which is defined as a Fourier multiplier with symbol $(|\xi|^2-z)^{-1}$. The integral kernel of $R_0(z)$ is given by\begin{align*}R_0(z,x,y)=\frac i4\Big(\frac{z^{1/2}}{2\pi |x-y|}\Big)^{n/2-1}H^{(1)}_{n/2-1}(z^{1/2}|x-y|),\quad \Im z^{1/2}>0,\end{align*} where $H^{(1)}_{n/2-1}$ is the Hankel function of the first kind. The pointwise estimate $$|H^{(1)}_{n/2-1}(w)|\le C_n\begin{cases}|w|^{-n/2+1}&\text{for}\ |w|\le1,\\|w|^{-1/2}&\text{for}\ |w|>1,\end{cases}$$then  implies that there exists $C_n>0$ depends only on $n$ such that\begin{align}\label{pointwise}|R_0(z,x,y)|\le C_n (|x-y|^{-n+2}+|x-y|^{-\frac{n-1}{2}})\<z\>^{\frac{n-3}{4}}\end{align} (see \cite{Jen_1}). For $s\in \R$, we let $L^2_s=L^2(\R^n,\<x\>^{2s}dx)$ and $\H^{2}_s=\{u\ |\ \partial^\alpha u\in L^2_s,\ |\alpha|\le2\}$. Then the following limiting absorption principle in weighted $L^2$-spaces is well known (see \cite{Agm,JeKa,Jen_1,Jen_2}): 

\begin{lemma}
\label{lemma_free_Sobolev_0}
Let $s>(n+1)/2$. Then $R_0(z)$ is bounded from $L^2_s$ to $L^2_{-s}$ uniformly in $z\in \C\setminus[0,\infty)$.  Moreover, the following statements are satisfied. 
\begin{itemize}
\item Boundary values 
$
R_0(\lambda\pm i0)=\lim\limits_{\ep\to0}R_0(\lambda\pm i\ep)\in \mathbb B_\infty(L^2_s,L^2_{-s})
$
exist on $[0,\infty)$ such that $R_0(0\pm i0)=(-\Delta)^{-1}$. Moreover, $R_0(\lambda\pm i0)\in  \mathbb B_\infty(L^2_s,\H^2_{-s})$ if $\lambda>0$. 
\item Define the extended free resolvent $R_0^\pm(z)$ by $R_0^\pm(z)=R_0(z)$ if $z\in \C\setminus[0,\infty)$ and $R_0^\pm(z)=R_0(z\pm i0)$ if $z\ge0$. Then $R_0^\pm(z)$ are $\mathbb B_\infty(L^2_s,L^2_{-s})$-valued continuous functions on $\overline{\C^\pm}$. 
\item For any $z\in \overline{\C^+}$ and $f\in L^2_s$, $(-\Delta-z)R^\pm_0(z)f=f$ in the sense of distributions. 
\end{itemize}
\end{lemma}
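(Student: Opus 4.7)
This is the classical limiting absorption principle for $-\Delta$ due to Agmon, Kuroda, and Jensen-Kato. My plan combines the Fourier representation of $R_0(z)$, Agmon's trace lemma, and a Rellich-type compact embedding. For the uniform $L^2_s \to L^2_{-s}$ bound on $\C \setminus [0,\infty)$, I would work on the Fourier side, where $R_0(z)$ is multiplication by $(|\xi|^2 - z)^{-1}$, and split this into a piece supported away from $\{|\xi|^2 = \Re z\}$ (controlled by Plancherel since its symbol is uniformly bounded) and a complementary piece supported near the sphere. The latter is handled by Agmon's trace lemma: for any $s > 1/2$, the restriction map $f \mapsto \hat f|_{|\xi| = r}$ is bounded from $L^2_s$ to $L^2(S^{n-1}_r, d\sigma)$ uniformly in $r > 0$. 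The hypothesis $s > (n+1)/2$ is stronger than needed for this norm estimate, but is comfortable for the arguments below.

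Next, existence of boundary values $R_0(\lambda \pm i0)$ on $[0,\infty)$ follows from the Plemelj formula
\[
\frac{1}{|\xi|^2 - \lambda \mp i0} = \mathrm{p.v.}\, \frac{1}{|\xi|^2 - \lambda} \pm i\pi\, \delta(|\xi|^2 - \lambda),
\]
each piece being bounded $L^2_s \to L^2_{-s}$ as above and continuous in $\lambda \in (0,\infty)$ by dominated convergence; the identification $R_0(0 \pm i0) = (-\Delta)^{-1}$ comes from the Newton-potential kernel in \eqref{pointwise} at $z = 0$. Upgrading from $L^2_{-s}$ to $\H^2_{-s}$ for $\lambda > 0$ is immediate from the distributional identity $-\Delta R_0(\lambda \pm i0) f = f + \lambda R_0(\lambda \pm i0) f$. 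Compactness then follows by factoring through an intermediate weight,
\[
L^2_s \hookrightarrow L^2_{s-\ep} \xrightarrow{R_0^\pm(z)} \H^2_{-s+\ep} \hookrightarrow L^2_{-s},
\]
where the first arrow is the obvious inclusion, the middle is bounded by the trace lemma (any $s - \ep > 1/2$ suffices), and the last embedding is compact by Rellich's theorem since $-s + \ep > -s$.

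Continuity of $R_0^\pm(\cdot)$ on $\overline{\C^\pm}$ with values in $\mathbb B_\infty(L^2_s, L^2_{-s})$ then follows by combining the above bounds with dominated convergence on the Fourier side, together with the closedness of the compact operators in $\mathbb B(L^2_s, L^2_{-s})$. The distributional identity $(-\Delta - z) R_0^\pm(z) f = f$ is immediate for $z \in \C \setminus [0,\infty)$ and passes to the boundary by continuity, since $-\Delta : \H^2_{-s} \to L^2_{-s}$ is bounded. The main obstacle I anticipate is the uniformity of these estimates as $|z| \to \infty$ and as $z \to 0$: the high-energy regime is delivered by the natural scaling covariance of Agmon's trace lemma, whereas the low-energy regime requires separate attention because of the $|\xi|^{-2}$ singularity of the symbol at the origin, which is precisely what forces the dimension restriction $n \ge 3$.
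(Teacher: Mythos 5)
The paper offers no proof of this lemma; it is stated as ``well known'' and attributed to \cite{Agm,JeKa,Jen_1,Jen_2}, so there is no internal argument to compare against. Your sketch is a reasonable outline of the classical argument found in those references: Fourier-side splitting with Agmon's trace lemma for the uniform bound, the Plemelj formula for the boundary values, and factoring through a weighted $\H^2$ space combined with the compact embedding $\H^2_{-s+\ep}\hookrightarrow L^2_{-s}$ for compactness. Two points are glossed over: the dilation covariance you invoke for the high-energy regime is exact only for the homogeneous weight $|x|^s$, not $\<x\>^s$, so the large-$|z|$ uniformity is usually obtained instead from explicit kernel or oscillatory-integral estimates such as \eqref{pointwise}; and the low-energy identification $R_0(0\pm i0)=(-\Delta)^{-1}$ requires the convergence of the kernel, not merely the uniform majorant \eqref{pointwise}, which is precisely the delicate zero-energy analysis carried out in \cite{JeKa,Jen_1,Jen_2} and is where the restriction $n\ge3$ enters.
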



\begin{figure}[htbp]
\label{figure_1}
\begin{center}
\scalebox{0.9}[0.9]{
\begin{tikzpicture}

\draw (0,0) rectangle (6.75,6.75);
\draw[->]  (0,0) -- (0,7.5);
\draw[->]  (0,0) -- (7.5,0);
\draw (6.75,0) node[below] {$\ 1$};
\draw (0,6.75) node[left] {$1$};
\draw (0,0) node[below, left] {$0$};
\draw (7.5,0) node[above] {$\frac1p$};
\draw (0,7.5) node[right] {$\frac1q$};
\draw (7.5,0) node[above] {$\frac1p$};
\draw (0,7.5) node[right] {$\frac1q$};
\draw[dashed] (0,0) -- (6.75,6.75);
\draw[dashed] (6.75,0) -- (0,6.75);

\draw[dashed] (6.3,3) -- (7.8,4.5);
\draw (7,3.5) node[right] {$\frac1p-\frac1q=\frac 2n$};
\draw (7,3.5) node[right] {$\frac1p-\frac1q=\frac 2n$};
\draw[dashed] (5.25,3) -- (7.8,5.55);
\draw (7.9,5.30) node[above] {$\frac1p-\frac1q=\frac{2}{n+1}$};
\draw (7.9,5.30) node[above] {$\frac1p-\frac1q=\frac{2}{n+1}$};


\draw[dashed] (3.75,1.5) -- (3.75,3.0) -- (5.25,3.0);
\draw (3.75,0.45) circle (2pt) node[right] {$A$};    
\draw[dashed] (3.75,0.45) -- (0,0.45);
\draw (6.3,3)  circle (2pt) node[right] {$\!A'$}; 
\draw[dashed] (6.3,6.3) -- (6.3,0);
\draw[line width=1pt] (3.75,0.45) -- (6.3,3); 
\draw[dashed] (3.75,0.45) -- (3.75,1.5) ; 
\draw[dashed] (5.25,3) -- (6.3,3) ; 

\draw (0.45,0.45) circle (2pt) node[above] {$C$};    
\draw (6.3,6.3)  circle (2pt) node[left] {$C'$}; 
\draw[line width=1pt] (0.45,0.45) -- (6.3,6.3); 

\draw (3.75,1.5)  circle (2pt) node[left] {$B$};   
\draw (5.25,3)  circle (2pt) node[above] {$B'$}; 
\draw[line width=1pt] (3.75,1.5) -- (5.25,3); 


\draw[dashed] (3.75,0.45) -- (3.75,0);
\draw (3.75,0) node[below] {$\frac{n+1}{2n}$};
\draw (3.75,0) node[below] {$\frac{n+1}{2n}$};
\draw (0,0.45) node[left] {$\frac{n-3}{2n}$};
\draw (0,0.45) node[left] {$\frac{n-3}{2n}$};
\draw(6.3,0) node[below]  {$\frac{n+3}{2n}$};
\draw(6.3,0) node[below]  {$\frac{n+3}{2n}$};
\draw (0,3) node[left] {$\frac{n-1}{2n}$};
\draw (0,3) node[left] {$\frac{n-1}{2n}$};
\draw[dashed] (5.025,1.725) -- (5.025,0) node[below] {$\frac{n+2}{2n}$};
\draw[dashed] (5.025,1.725) -- (0,1.725) node[left] {$\frac{n-2}{2n}$};
\draw (5.025,0) node[below] {$\frac{n+2}{2n}$};
\draw (0,1.725) node[left] {$\frac{n-2}{2n}$};
\draw[dashed] (0,3) -- (3.75,3.0);
\end{tikzpicture}
}
\end{center}
\caption{The set of $(1/p,1/q)$ satisfying \eqref{p_q} is the trapezium $ABB'A'$ with two closed line segments $\overline{AB}$, $\overline{B'A'}$ removed. The set of $(1/p,1/q)$ satisfying \eqref{corollary_KRS_2_1} is the trapezium $ACC'A'$ with two closed line segments $\overline{AC}$, $\overline{C'A'}$ removed.}
\end{figure}
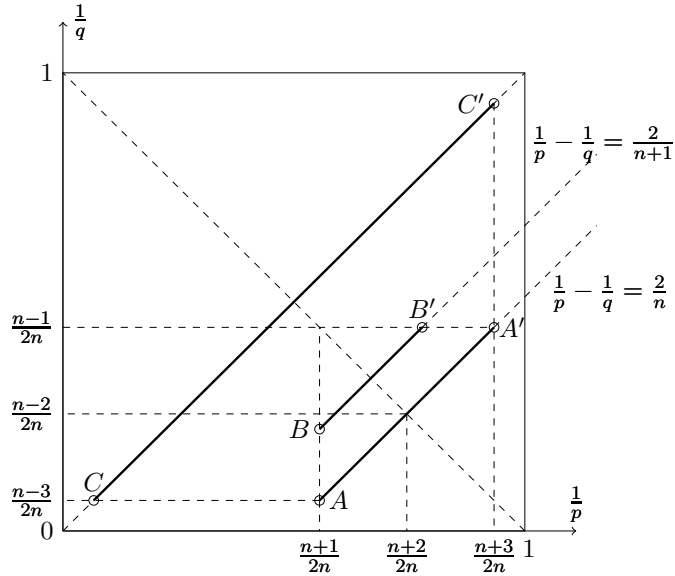

The following corollaries are immediate consequences of Lemma \ref{lemma_free_Sobolev_0} and Proposition \ref{proposition_free_Sobolev_1}. 

\begin{corollary}
\label{corollary_free_Sobolev_1} Let $(p,q)$ satisfy \eqref{p_q} and $2n/(n+3)<r<2n/(n+1)$. Then,
\begin{itemize}
\item[{\rm(1)}] $R_0^\pm(z)$ extend to elements in $\mathbb B(L^{p,2},L^{q,2})$ and satisfy
\begin{align}
\label{corollary_free_Sobolev_1_2}
\norm{R_0^\pm(z)}_{\mathbb B(L^{p,2},L^{q,2})}\le C|z|^{\frac n2(\frac1p-\frac1q)-1},\quad z\in \overline{\C^\pm}\setminus\{0\}.
\end{align}
\item[{\rm(2)}] For any $f\in L^{p,2}$ and $g\in L^{q',2}$, $\<R_0^\pm(z)f,g\>$ are continuous on $\overline{\C^\pm}\setminus\{0\}$.  
\item[{\rm(3)}] For any $z\in \overline{\C^\pm}$ and $f\in L^{r,2}$, $(-\Delta-z)R_0^\pm(z)f=f$ in the sense of distributions. 
\end{itemize}Assuming in addition that $1/p-1/q=2/n$, the statements {\rm (1)} and {\rm (2)} hold for all $z\in \overline{\C^\pm}$. 
\end{corollary}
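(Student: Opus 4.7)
The plan is to combine the interior Lorentz-space bound of Proposition \ref{proposition_free_Sobolev_1} with the weighted-$L^2$ limiting absorption principle of Lemma \ref{lemma_free_Sobolev_0}: the latter supplies the boundary values of $R_0(z)$ in a weaker topology, while the former supplies a uniform Lorentz bound that can be transferred to the boundary by a density-and-duality argument.

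For part (1), the bound on $\C\setminus[0,\infty)$ is immediate from Proposition \ref{proposition_free_Sobolev_1} with $r=2$. To extend to $\lambda\pm i0$ with $\lambda>0$, I would take $f,g\in C_c^\infty(\R^n)\subset L^2_s\cap L^{p,2}\cap L^{q',2}$. Lemma \ref{lemma_free_Sobolev_0} gives $\<R_0(\lambda\pm i\ep)f,g\>\to\<R_0^\pm(\lambda)f,g\>$ as $\ep\searrow 0$, while Proposition \ref{proposition_free_Sobolev_1} and H\"older's inequality for Lorentz spaces bound the pre-limit quantity uniformly by $C|\lambda\pm i\ep|^{\frac n2(\frac1p-\frac1q)-1}\norm{f}_{L^{p,2}}\norm{g}_{L^{q',2}}$. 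Passing to the limit and using density of $C_c^\infty$ in $L^{p,2}$ and $L^{q',2}$ together with the duality $(L^{q',2})^*\simeq L^{q,2}$ yields a unique extension in $\mathbb B(L^{p,2},L^{q,2})$ satisfying \eqref{corollary_free_Sobolev_1_2}. The additional point $z=0$ in the sub-case $1/p-1/q=2/n$ is handled by identifying $R_0^\pm(0)$ with the Riesz potential $(-\Delta)^{-1}$ and invoking the Hardy-Littlewood-Sobolev inequality on Lorentz spaces.

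Part (2) is proved along the same lines: continuity on $\C^\pm$ is inherited from operator-valued analyticity of $z\mapsto R_0(z)$ combined with the uniform bound from (1); continuity at $\lambda>0$ follows from Lemma \ref{lemma_free_Sobolev_0} applied to a dense subset together with the uniform Lorentz bound and a standard $3\ep$ argument. For part (3), the identity $(-\Delta-z)R_0(z)f=f$ holds by definition for $z\in\C\setminus[0,\infty)$; for boundary values $\lambda\pm i0$ and $f\in L^{r,2}$, I would test against $\phi\in C_c^\infty(\R^n)$, observe that $\<R_0(\lambda\pm i\ep)f,(-\Delta-\overline{\lambda\pm i\ep})\phi\>=\<f,\phi\>$ for every $\ep>0$, and let $\ep\searrow 0$ using part (2) to conclude.

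The step I expect to be the main obstacle is the $z=0$ endpoint in the sub-case $1/p-1/q=2/n$: Lemma \ref{lemma_free_Sobolev_0} is not formulated uniformly in the Lorentz topology down to $z=0$, so the matching between the weighted-$L^2$ convergence coming from the LAP and the HLS bound at the origin has to be carried out by hand. The cleanest route appears to be to exploit the explicit Hankel-function kernel representation of $R_0(z)$ together with the pointwise bound \eqref{pointwise} and dominated convergence on a dense family, thereby bridging the two topologies in a neighbourhood of the origin.
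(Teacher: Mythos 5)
Your proposal is correct and follows exactly the route the paper intends: the paper simply declares the corollary an immediate consequence of Lemma \ref{lemma_free_Sobolev_0} and Proposition \ref{proposition_free_Sobolev_1}, and your density-plus-duality argument transferring the uniform Lorentz bound to the boundary values supplied by the weighted-$L^2$ LAP is the standard way to fill that in. The $z=0$ endpoint you flag as the main obstacle is in fact harmless and needs no kernel argument: Lemma \ref{lemma_free_Sobolev_0} already gives continuity of $R_0^\pm$ in $\mathbb B(L^2_s,L^2_{-s})$ on all of $\overline{\C^\pm}$ including the origin, and in the sub-case $1/p-1/q=2/n$ the exponent $\frac n2(\frac1p-\frac1q)-1$ vanishes, so your uniform bound and $\ep/3$ density argument apply verbatim down to $z=0$.
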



Throughout the paper, we frequently use the notation
\begin{align}
\label{p_q_s}
p_s=\frac{2n}{n+2(2-s)},\quad q_s=\frac{2n}{n-2s}. 
\end{align}
Note that $\{(p_s,q_s)\ |\ 1/2<s<3/2\}=\{(p,q)\ |\ \text{$(p,q)$ satisfies \eqref{p_q} and $1/p-1/q=2/n$}\}$. 
\begin{corollary}
\label{corollary_free_Sobolev_2}
Let $1/2<s<3/2$, $V_1\in L^{n/s,\infty}_0$ and $V_2\in L^{n/(2-s),\infty}_0(\R^n)$. Then $V_1R_0^\pm(z)V_2$ are $\mathbb B_\infty(L^2)$-valued continuous function of $z\in \overline{\C^\pm}$. 
\end{corollary}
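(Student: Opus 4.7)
The approach is to first establish a uniform operator-norm bound for $V_1 R_0^\pm(z) V_2$ on $L^2$, then reduce to the case of smooth compactly supported potentials by density, and finally invoke the weighted $L^2$ limiting absorption principle from Lemma \ref{lemma_free_Sobolev_0}.

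For the uniform bound, I take $(p,q)=(p_s,q_s)$ as in \eqref{p_q_s}; the hypothesis $1/2<s<3/2$ ensures that this pair satisfies \eqref{p_q}, while the critical relation $1/p_s-1/q_s=2/n$ makes the estimate \eqref{corollary_free_Sobolev_1_2} uniform in $z\in\overline{\C^\pm}$. H\"older's inequality for Lorentz spaces yields that multiplication by $V_2$ maps $L^2$ boundedly into $L^{p_s,2}$ with norm $\lesssim \norm{V_2}_{L^{n/(2-s),\infty}}$, and symmetrically that multiplication by $V_1$ maps $L^{q_s,2}$ boundedly into $L^2$ with norm $\lesssim \norm{V_1}_{L^{n/s,\infty}}$. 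Composing these mapping properties with Corollary \ref{corollary_free_Sobolev_1} gives
$$\sup_{z\in\overline{\C^\pm}}\norm{V_1 R_0^\pm(z) V_2}_{\mathbb B(L^2)}\lesssim \norm{V_1}_{L^{n/s,\infty}}\norm{V_2}_{L^{n/(2-s),\infty}}.$$

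With this bound in hand I reduce to $V_1,V_2\in C_0^\infty$ by density. By definition of $L^{p,\infty}_0$, I pick approximants $V_j^\ep\in C_0^\infty$ converging to $V_j$ in the corresponding Lorentz norm, and write
$$V_1 R_0^\pm(z) V_2 - V_1^\ep R_0^\pm(z) V_2^\ep = (V_1-V_1^\ep) R_0^\pm(z) V_2 + V_1^\ep R_0^\pm(z)(V_2-V_2^\ep).$$
Applying the uniform bound to each term shows that $V_1^\ep R_0^\pm(z) V_2^\ep \to V_1 R_0^\pm(z) V_2$ in $\mathbb B(L^2)$ uniformly in $z$. Since $\mathbb B_\infty(L^2)$ is norm-closed in $\mathbb B(L^2)$ and uniform limits of continuous functions are continuous, it suffices to prove the corollary under the additional assumption that $V_1$ and $V_2$ are smooth and compactly supported.

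For such $V_1,V_2$, fix any $s_0>(n+1)/2$. Multiplication by $V_2$ maps $L^2$ boundedly into $L^2_{s_0}$, since the weight $\<x\>^{s_0}$ is bounded on the support of $V_2$, and similarly multiplication by $V_1$ maps $L^2_{-s_0}$ boundedly into $L^2$. By Lemma \ref{lemma_free_Sobolev_0}, $R_0^\pm(z)$ is a $\mathbb B_\infty(L^2_{s_0},L^2_{-s_0})$-valued continuous function on $\overline{\C^\pm}$, so the composition $V_1 R_0^\pm(z) V_2 : L^2 \to L^2_{s_0} \to L^2_{-s_0} \to L^2$ is a $\mathbb B_\infty(L^2)$-valued continuous function of $z$, completing the proof. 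The main obstacle is the first step: one must verify that the H\"older-Lorentz pairing produces precisely the critical indices $(p_s,q_s)$ for which \eqref{corollary_free_Sobolev_1_2} is genuinely uniform in $z$; once this alignment is confirmed, the density reduction and the weighted $L^2$ argument for smooth cutoffs are essentially formal.
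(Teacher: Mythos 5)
Your proof is correct and follows essentially the same route as the paper: the uniform bound $\sup_{z}\norm{V_1R_0^\pm(z)V_2}_{\mathbb B(L^2)}\lesssim\norm{V_1}_{L^{n/s,\infty}}\norm{V_2}_{L^{n/(2-s),\infty}}$ from Corollary \ref{corollary_free_Sobolev_1} with $(p_s,q_s)$ and H\"older's inequality, a density/$\ep/3$ reduction to $V_1,V_2\in C_0^\infty$, and then the weighted $L^2$ limiting absorption principle of Lemma \ref{lemma_free_Sobolev_0} for the compactly supported case. The only difference is that you spell out the last step (the factorization through $L^2_{s_0}$ and $L^2_{-s_0}$), which the paper leaves implicit.
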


\begin{proof}
Corollary \ref{corollary_free_Sobolev_1} (1) with $(p,q)=(p_s,q_s)$ and H\"older's inequality \eqref{Holder} imply
$$
\sup_{z\in \overline{\C^+}}\norm{V_1R_0^\pm(z)V_2}_{\mathbb B(L^2)}\lesssim \norm{V_1}_{L^{\frac ns,\infty}}\norm{V_2}_{L^{\frac{n}{2-s},\infty}}. 
$$
Since $C_0^\infty$ is dense in $L^{p,\infty}_0$ for all $1<p<\infty$ and an operator norm limit of compact operators is compact, we observe from this uniform bound and a standard $\ep/3$ argument that it suffices to show the corollary for $V_1,V_2\in C_0^\infty$. In this case, the corollary follows from Lemma \ref{lemma_free_Sobolev_0}. 
\end{proof}

The following proposition plays an essential role throughout the paper. 

\begin{proposition}
\label{compactness}
Let $w\in L^{n/2,\infty}_0(\R^n)$, $1/2<s<3/2$ and $q_s$ as above. Then $R_0(z)w\in \mathbb B_\infty(\H^1)$ for all $z\in\C\setminus [0,\infty)$. Moreover, $R_0^\pm(z)w$ are $\mathbb B_\infty(L^{q_s,2})$-valued continuous functions on $\overline{\C^\pm}$. 
\end{proposition}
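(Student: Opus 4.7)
For the first assertion, I would factor $R_0(z)w:\H^1\xrightarrow{w}L^2\xrightarrow{R_0(z)}\H^2\hookrightarrow\H^1$. Boundedness of multiplication by $w$ from $\H^1$ to $L^2$ follows from H\"older's inequality for Lorentz spaces combined with the sharp Sobolev embedding $\H^1\hookrightarrow L^{2n/(n-2),2}$, giving $\|wu\|_{L^2}\lesssim\|w\|_{L^{n/2,\infty}}\|u\|_{\H^1}$. For $w\in C_0^\infty$ compactness is the classical Rellich--Kondrachov theorem; for general $w\in L^{n/2,\infty}_0$ it follows by approximation in $L^{n/2,\infty}$-norm together with the above uniform estimate, exhibiting $w$ as an operator-norm limit of compact multiplication operators. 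Composing with the bounded map $R_0(z):L^2\to\H^2\hookrightarrow\H^1$, valid for $z\in\C\setminus[0,\infty)$, then yields compactness on $\H^1$.

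For the second assertion, I would factor $w=V_1V_2$ with $V_1=|w|^{(2-s)/2}\in L^{n/(2-s),\infty}_0$ and $V_2=|w|^{s/2}\operatorname{sgn}(w)\in L^{n/s,\infty}_0$. H\"older's inequality yields $V_2:L^{q_s,2}\to L^2$ bounded, so writing $R_0^\pm(z)w=[R_0^\pm(z)V_1]\circ V_2$ reduces matters to showing that $R_0^\pm(z)V_1$ is $\mathbb{B}_\infty(L^2,L^{q_s,2})$-valued and norm continuous on $\overline{\C^\pm}$. By duality, it suffices to treat $V_1R_0^\mp(\bar z):L^{q_s',2}\to L^2$. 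Corollary~\ref{corollary_free_Sobolev_1}(1) applied to the pair $(q_s',q_{2-s})$ with $q_{2-s}=2n/(n+2s-4)$---which lies in the admissible range \eqref{p_q} precisely when $1/2<s<3/2$---combined with H\"older gives the uniform bound $\|V_1R_0^\mp(\bar z)\|_{\mathbb{B}(L^{q_s',2},L^2)}\lesssim\|V_1\|_{L^{n/(2-s),\infty}}$. Density of $C_0^\infty$ in $L^{n/(2-s),\infty}_0$ then reduces the problem to the case $V_1\in C_0^\infty$.

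For such $V_1$ with compact support $K$, interior elliptic regularity applied to $(-\Delta-\bar z)R_0^\mp(\bar z)u=u$, combined with the uniform $L^{q_{2-s},2}$-bound on $R_0^\mp(\bar z)u$ from Corollary~\ref{corollary_free_Sobolev_1}(1), provides a uniform local Sobolev bound on $R_0^\mp(\bar z)u$ on $K'\Supset K$ for $z$ in any compact $\Omega\subset\overline{\C^\mp}$. Since $q_s'<2$ and the relevant Sobolev conjugate exceeds $2$ (thanks to $s<2$), Rellich--Kondrachov furnishes a compact embedding into $L^2(K')$. This yields compactness of each $V_1R_0^\mp(\bar z)$ and, crucially, the \emph{collective compactness} of the family $\{V_1R_0^\mp(\bar z)\}_{z\in\Omega}$ in $\mathbb{B}(L^{q_s',2},L^2)$. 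Strong convergence $V_1R_0^\mp(\bar z_n)u\to V_1R_0^\mp(\bar z_0)u$ in $L^2$ as $z_n\to z_0$ follows for $u\in C_0^\infty$ from Lemma~\ref{lemma_free_Sobolev_0} (the compact support of $V_1$ reduces the weighted $L^2_{-s'}$-convergence to plain $L^2$-convergence on $K$), and extends to arbitrary $u\in L^{q_s',2}$ by density together with the uniform operator bound. Anselone's classical principle for collectively compact families then promotes strong to norm convergence. The main obstacle is precisely this last step: Lemma~\ref{lemma_free_Sobolev_0} gives continuity only in weighted $L^2$-spaces, and on the boundary $z\in[0,\infty)$ the resolvent kernel decays only polynomially, so direct estimation of $R_0^\pm(z)-R_0^\pm(z_0)$ in $\mathbb{B}(L^{p_s,2},L^{q_s,2})$ is not available; the collective compactness argument exploiting the compact support of $V_1$ circumvents this obstacle.
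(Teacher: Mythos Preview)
Your argument for the first assertion and for the compactness part of the second assertion is correct, and in fact more elementary than the paper's (which invokes a result from \cite{IoSc} for the $\H^1$ compactness, and works directly on $L^{q_s}$ with a near/far decomposition rather than dualizing).

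However, the continuity argument has a genuine gap. The step ``Anselone's classical principle for collectively compact families then promotes strong to norm convergence'' is false as stated. Collective compactness of $\{T_z\}$ together with strong convergence $T_{z_n}\to T_{z_0}$ does \emph{not} imply norm convergence: take $T_n x=\langle x,e_n\rangle e_1$ on $\ell^2$; then $\{T_n\}$ is collectively compact (all images lie in $\operatorname{span}\{e_1\}$), $T_n\to 0$ strongly, yet $\|T_n\|=1$. Anselone's theorem only yields $\|(T_{z_n}-T_{z_0})T_{z_n}\|\to 0$, which is not what you need. To upgrade to norm convergence in reflexive spaces one must additionally know either strong convergence of the adjoints $T_z^*=R_0^\pm(z)\overline{V_1}:L^2\to L^{q_s,2}$, or collective compactness of $\{T_z^*\}$. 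Both of these require controlling $R_0^\pm(z)\psi$ in $L^{q_s,2}$ \emph{globally} (not just on the support of $V_1$), and the obstruction is precisely the slow spatial decay of $R_0^\pm(z)\psi$ when $z\in[0,\infty)$. The paper handles this via the pointwise kernel estimate \eqref{pointwise}, which gives uniform smallness of $R_0^+(z)w f$ on $\{|x|>r\}$ as $r\to\infty$; this far-region estimate is the missing ingredient in your approach. Once you have it, the paper's route---proving the joint continuity $(z_j,g_j)\mapsto R_0^+(z_j)wg_j$ (weak in $g$, topological in $z$, strong in the output) and then arguing by contradiction---closes the gap; alternatively, the same kernel bound would give collective compactness of the adjoint family, after which your Anselone-style argument can be made rigorous.
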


\begin{remark}
$R_0^\pm(z)w$ are also $\mathbb B_\infty(L^{q_s})$-valued continuous functions on $\overline{\C^\pm}$. The proof is completely same. 
\end{remark}

\begin{proof}
The facts $R_0(z)w\in \mathbb B(\H^1)\cap \mathbb B(L^{q_s,2})$ and $R_0^\pm(z)w\in\mathbb B(L^{q_s,2})$ follow from the continuity $R_0(z):\H^{-1}\to \H^1$, uniform Sobolev estimates \eqref{proposition_free_Sobolev_1_1} and H\"older's inequality for Lorentz norms. 

To prove the compactness and the continuity (in $z$), by virtue of these estimates and the same argument as above, we may assume  without loss of generality that $w\in C_0^\infty$ and $w(x)=0$ for $|x|\ge c_0$ with some $c_0>0$. Then it was proved by \cite[Lemma 4.2]{IoSc} that there is a Banach space $X$ satisfying the continuous embedding $X\hookrightarrow \H^{-1}$ such that $w:X^*\to X$ is compact as a multiplication operator. $R_0(z)w$ is therefore compact on $\H^1$ for $z\in \C\setminus[0,\infty)$.

Next we shall prove that $R_0^\pm(z)w$ are compact on $L^{q_s,2}$ for $z\in \overline{\C^\pm}$. As before, we only consider $R_0^+(z)$. By virtue of real interpolation (Theorem \ref{theorem_interpolation_2}), it suffices to show that $R_0^+(z)w$ is compact on $L^{q_s}$ for all $1/2<s<3/2$. Assume that $f_j\in L^{q_s}$ and $\norm{f}_{L^{q_s}}\le1$. Extracting a subsequence if necessary we may assume $f_j\to 0$ weakly in $L^{q_s}$. Then it remains to show that there exists a subsequence $\{\wtilde f_{j}\}\subset \{f_j\}$ such that $R_0^+(z)w\wtilde f_{j}\to0$ strongly in $L^{q_s}$. 
To this end, we decompose $R_0^+(z)w$ into two regions $B_r^c$ and $B_r$, where $B_r=\{x\in \R^n\ |\ |x|\le r\}$. For the former case, the pointwise estimate \eqref{pointwise} yields
\begin{align*}
|R_0^+(z)wf_j(x)|
\le C_{n}\<z\>^{\frac{n-3}{4}}|x|^{-\frac{n-1}{2}}\norm{wf_j}_{L^1}
\le C_{n,z}|x|^{-\frac{n-1}{2}}\norm{w}_{L^{\frac{2n}{n+2s}}}
\end{align*}
uniformly in $|x|\ge r$, $r\ge 2c_0$ and $j\ge0$. Let us fix $\ep>0$ arbitrarily. Since $$
\norm{|x|^{-\frac{n-1}{2}}}_{L^{q_s}(B_r^c)}\le C r^{-(s-1/2)},
$$
we can find $r_0=r_0({n,\ep,z,w})>0$ such that
\begin{align}
\label{proof_compactness_1}
\norm{R_0^+(z)wf_j}_{L^{q_s}(B_{r_0}^c)}<\ep.
\end{align}
For the latter case, we observe that $R_0^+(z)w:L^{q_s}(\R^n)\to \mathcal W^{2,q_s}(\R^n)$ is bounded since
\begin{align}
\label{proof_compactness_2}
(-\Delta+1)R_0^+(z)wf=(-\Delta-z)R_0^+(z)wf+(z+1)R_0^+(z)wf=wf+(z+1)R_0^+(z)wf
\end{align}
for all $f\in L^{q_s}$ by Corollary \ref{corollary_free_Sobolev_1} (3). In particular, $\{R_0^+(z)wf_j\}_j$ is bounded in $\mathcal W^{2,q_s}(B_{r_0})$. Since $\mathcal W^{2,q_s}(B_{r_0})$ embeds compactly into $L^{q_s}(B_{r_0})$ 
by the Rellich-Kondrachov compactness theorem, one can find a subsequence $\{\wtilde f_j\}\subset \{f_j\}$ such that 
\begin{align}
\label{proof_compactness_3}
\lim_{j\to\infty}\norm{R_0^+(z)w\wtilde f_j}_{L^{q_s}(B_{r_0})}=0.
\end{align}
It follows from \eqref{proof_compactness_1} and \eqref{proof_compactness_3} that
$$
\limsup\limits_{j\to \infty}\norm{R_0^+(z)w\wtilde f_j}_{L^{q_s}(\R^n)}\le \ep. 
$$
By extracting further a subsequence, we conclude that $R_0^+(z)w\wtilde f_j\to0$ strongly in $L^{q_s}$. 

To prove the continuity, let us fix a bounded set $\Lambda\subset \overline{\C^+}$ arbitrarily. We first show that, for any $z,z_j\in \Lambda$ and $g,g_j\in L^{q_s,2}$ satisfying $z_j\to z$ and $g_j\to g$ weakly in $L^{q_s,2}$ as $j\to \infty$, 
\begin{align}
\label{proof_compactness_4}
R_0^+(z_j)wg_j\to R_0^+(z)wg\quad \text{strongly in}\ L^{q_s,2}\ \text{as $j\to\infty$}.
\end{align}
To this end, we write 
$$
R_0^+(z_j)wg_j-R_0^+(z)wg=\Big(R_0^+(z_j)w-R_0^+(z)w\Big)g_j+R_0^+(z)w(g_j-g).
$$
The second term $R_0^+(z)w(g_j-g)$ converges to $0$ strongly in $L^{q_s,2}$ since $R_0^+(z)w$ is compact on $L^{q_s,2}$ and $g_j\to g$ weakly. For the first part, we set $h_j=(R_0^+(z_j)w-R_0^+(z)w\Big)g_j$ and shall show that $h_j\to 0$ strongly in $L^{q_s,2}$. Since $\{g_j\}\subset L^{q_s,2}$ is bounded, say $\norm{g_j}_{L^{q_s,2}}\le M$ with $M>0$ being independent of $j$, we learn by the same argument as above that, with some $\gamma_j=\gamma_j(s,n)>0$, 
$$
\norm{R_0^+(\zeta)wg_j}_{L^{q_s,2}(B_r^c)}\le C_{n,M,w}\<\zeta\>^{\gamma_1}r^{-\gamma_2}
$$
for all $\zeta\in \overline {\C^+}$, $j\ge1$ and $r\ge2c_0$ ,where $C_{n,M,w}$ may be taken uniformly in $j$ and $r$. This estimate yields that, for any $\ep>0$, there exists $0<r_\ep=r(n,M,w,\Lambda,\ep)\sim \ep^{-1/\gamma_2}$ such that
\begin{align}
\label{proof_compactness_5}
\sup_{j\ge1,}\norm{h_j}_{L^{q_s,2}(B_{r_\ep}^c)}\le \sup_{j\ge1}\Big(\norm{R_0^+(z_j)wg_j}_{L^{q_s,2}(B_{r_\ep}^c)}+\norm{R_0^+(z)wg_j}_{L^{q_s,2}(B_{r_\ep}^c)}\Big)<\ep.
\end{align}
On the other hand, it follows from Sobolev's embedding on $\R^n$ that
\begin{align*}
\norm{h_j}_{L^{q_s,2}(B_{r_\ep})}
\le C_{\ep,N}\norm{(-\Delta+1)\<x\>^{-N}h_j}_{L^{2}(\R^n)}
\le C_{\ep,N}\norm{\<x\>^{-N}(-\Delta+1)h_j}_{L^{2}(\R^n)}
\end{align*}
for all $N\ge0$, where we have used the fact that $(-\Delta+1)\<x\>^{-N}(-\Delta+1)^{-1}\<x\>^{N}$ is a pseudodifferential operator of order $0$ and thus bounded on $L^p$ for all $1<p<\infty$.  \eqref{proof_compactness_2} then yields
\begin{align*}
\norm{\<x\>^{-N}(-\Delta+1)h_j}_{L^{2}}
&\le |z-z_j|\norm{\<x\>^{-N}R_0^+(z_j)\<x\>^{-N}}_{\mathbb B(L^2)}\norm{\<x\>^Nwg_j}_{L^2}\\
&+(|z|+1)\norm{\<x\>^{-N}(R_0^+(z_j)-R_0^+(z))\<x\>^{-N}}_{\mathbb B(L^2)}\norm{\<x\>^Nwg_j}_{L^2}.
\end{align*}
Let $N\ge (n+1)/2$. Since $\<x\>^{-N}R_0^+(z)\<x\>^{-N}$ is bounded on $L^2$ uniformly in $z\in \overline{\C^+}$ and continuous on $\overline{\C^+}$ in the operator norm topology of $\mathbb B(L^2)$ by Lemma \ref{lemma_free_Sobolev_0} and $$\norm{\<x\>^Nwg_j}_{L^2}\le CM\norm{\<x\>^Nw}_{L^{\frac{2n}{n+2s},2}}\le C_{N,M,\omega}$$ uniformly in $j$, we see that $\lim\limits_{j\to \infty}\norm{\<x\>^{-N}(-\Delta+1)h_j}_{L^{2}}=0$ which, together with \eqref{proof_compactness_5}, shows that there exists $j_\ep\in \N$ such that, for all $j\ge j_\ep$, $\norm{h_j}_{L^{q_s,2}(\R^n)}<\ep$. Since $\ep>0$ is arbitrarily small, this shows that $h_j\to 0$ strongly in $L^{q_s,2}$ and \eqref{proof_compactness_4} follows. 

Finally, we shall show $R_0^+(z)w$ is continuous on $\overline{\C^+}$ in the operator norm topology of $\mathbb B(L^{q_s,2})$. Assume for contradiction that this is not the case. Then there exist $z_j,z\in\overline {\C^+}$ with $z_j\to z$ and $g_j\in L^{q_s,2}$ with $\norm{g_j}_{L^{q_s,2}}\le1$ such that $\liminf\limits_{j\to\infty}\norm{(R_0^+(z_j)w-R_0^+(z)w)g_j}_{L^{q_s,2}}>0$. Extracting a subsequence if necessary we may assume $g_j\to g$ with some $g\in L^{q_s}$ weakly in $L^{q_s}$. Then, by the argument as above and the compactness of $R_0^+(z)w$, we have
$
\lim\limits_{j\to\infty} R_0^+(z_j)wg_j=R_0^+(z)wg=\lim\limits_{j\to\infty}R_0^+(z)wg_j,
$
which gives a contradiction, proving the desired assertion. 
\end{proof}

\subsection{The exceptional set}
\label{subsection_resonance}
Having Proposition \ref{compactness} in mind, we define the exceptional set of $H$ as follows. 
\begin{definition}				
\label{definition_resonance}
We say that $\lambda\in \mathcal E$ if there exist $1/2<s<3/2$ and $f\in L^{q_s,2}(\R^n)\setminus\{0\}$ such that $f=-R_0(\lambda)V f$, 
where $q_s=2n/(n-2s)$ and $R_0(\lambda)$ is replaced by $R_0(\lambda+i0)$ if $\lambda\ge 0$. $\mathcal E$ is said to be the {\it exceptional set} of $H$. $z\in \mathcal E\setminus\sigma_{\mathop{\mathrm{p}}}(H)$ is called a {\it resonance} of $H$. For $\lambda\in \mathcal E$, we denote the family of corresponding solutions by $\mathcal N_s(\lambda)$: 
\begin{align*}
\mathcal N_s(\lambda)
&:=\{f\in L^{q_s,2}(\R^n)\setminus\{0\}\ |\ f=-R_0(\lambda)V f\},
\end{align*}
where $R_0(\lambda)$ is replaced by $R_0^+(\lambda)$ if $\lambda\ge 0$. 
\end{definition}
Note that, since $R_0(\lambda-i0)f=\overline{R_0(\lambda+i0)\overline f}$, one has
\begin{align}
\label{proof_proposition_exceptional_set_0}
\mathcal N_s(\lambda)=\{f\in L^{q_s,2}(\R^n)\setminus\{0\}\ |\ f=-R_0^-(\lambda)V f\},\quad \lambda\ge0. 
\end{align}
The next lemma collects some basic properties of $\mathcal E$. 

\begin{proposition}
\label{proposition_exceptional_set}$ $
\begin{itemize}
\item[{\rm(1)}] $\mathcal E\subset \sigma(H)$, $\sigma_{\mathrm{p}}(H)\subset \mathcal E$ and $\mathcal E\cap(-\infty,0)=\sigma_{\mathrm{d}}(H)$.  Moreover, $\mathcal N_s(\lambda)$ is finite dimensional. 
\item[{\rm (2)}] $\mathcal N_s(\lambda)$ is independent of $1/2<s<3/2$; that is, $\mathcal N_s(\lambda)=\mathcal N_{s'}(\lambda)$ for any $1/2<s,s'<3/2$. 
\end{itemize}
\end{proposition}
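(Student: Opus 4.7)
The plan is to combine the Fredholm alternative on the Lorentz scale $\{L^{q_s,2}\}_{s\in(1/2,3/2)}$ with a bootstrap propagating membership between different exponents $s$. By Proposition \ref{compactness} the operator $R_0^\pm(\lambda)V$ is compact on $L^{q_s,2}$ for every $\lambda$ and every admissible $s$, so $I+R_0^\pm(\lambda)V$ is Fredholm of index zero on each $L^{q_s,2}$, and
$$
\mathcal N_s(\lambda)\cup\{0\}=\Ker\bigl(I+R_0^\pm(\lambda)V\bigr)\big|_{L^{q_s,2}}
$$
is finite-dimensional, which settles the last sentence of (1) immediately. I would prove (2) before the remaining inclusions in (1), since it removes the ambiguity over which space one works in.

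For (2), fix $s_0,s'\in(1/2,3/2)$ and $f\in\mathcal N_{s_0}(\lambda)$. Exploiting that $V\in L^{n/2,\infty}_0$ is approximable in norm by $C_0^\infty$ functions, I would split $V=V_1+V_2$ with $V_1\in C_0^\infty$ and $\|V_2\|_{L^{n/2,\infty}}$ so small that the uniform-Sobolev bound $\|R_0^\pm(\lambda)V_2\|_{\mathbb B(L^{q_s,2})}\lesssim_{s,\lambda}\|V_2\|_{L^{n/2,\infty}}$ (from Corollary \ref{corollary_free_Sobolev_1} and H\"older) is $<1/2$ simultaneously at $s=s_0$ and $s=s'$. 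Neumann series then invert $I+R_0^\pm(\lambda)V_2$ on both $L^{q_{s_0},2}$ and $L^{q_{s'},2}$, and the defining equation rewrites as
$$
f = \sum_{k\ge0}\bigl(-R_0^\pm(\lambda)V_2\bigr)^k \bigl(-R_0^\pm(\lambda)V_1 f\bigr)
\qquad\text{in } L^{q_{s_0},2}.
$$
Since $V_1 f$ is compactly supported, a local Lorentz H\"older estimate (using $p_{s'}<2\le q_{s_0}$ on $\supp V_1$) puts it in $L^{p_{s'},2}$, so the diagonal uniform Sobolev estimate gives $R_0^\pm(\lambda)V_1 f\in L^{q_{s'},2}$. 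Hence the same series converges in $L^{q_{s'},2}$ too, and by uniqueness of almost-everywhere limits its sum equals $f$; thus $f\in\mathcal N_{s'}(\lambda)$, proving (2).

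For the remaining parts of (1): if $u\in D(H)\subset\H^1$ is an eigenfunction with $Hu=\lambda u$, then the sharp Lorentz Sobolev embedding $\H^1\hookr L^{q_1,2}$ gives $u\in L^{q_1,2}$. The distributional identity $(-\Delta-\lambda)u=-Vu$ yields $u=-R_0^\pm(\lambda)Vu$ either by direct Fourier inversion (when $\lambda<0$, since $|\xi|^2-\lambda$ is bounded away from zero) or by pairing the form eigen-equation against $v=R_0(\lambda\mp i\ep)\psi$ with $\psi\in\S$ and letting $\ep\downarrow0$ (when $\lambda\ge0$; the regularization error is killed by Lemma \ref{lemma_free_Sobolev_0} and the crude bound $\ep\|R_0(\lambda\mp i\ep)\psi\|_{L^2}=O(\sqrt{\ep})$). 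Thus $\sigma_{\mathrm p}(H)\subset\mathcal E$. Conversely, for $\lambda\in\mathcal E$, (2) furnishes $f\in\mathcal N_1(\lambda)\subset L^{q_1,2}$; H\"older gives $Vf\in L^{p_1,2}$, and since $\nabla R_0^\pm(\lambda)$ is a Fourier multiplier of order $-1$ sending $L^{p_1,2}$ into $L^2$, we get $\nabla f\in L^2$ and $f\in\H^1$. For $\lambda\in\C\setminus[0,\infty)$ the exponential decay of the Green's function also places $f\in L^2$, hence $f\in D(H)$ with $Hf=\lambda f$; this forces $\lambda\in\sigma_{\mathrm p}(H)$, specializing to $\sigma_{\mathrm d}(H)$ for $\lambda<0$ and ruling out $\lambda\in\C\setminus\R$ by self-adjointness, while for $\lambda\in[0,\infty)$ the inclusion $\mathcal E\subset\sigma(H)$ is automatic from $[0,\infty)=\sigma_{\mathrm{ess}}(H)$.

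The main hurdle is the bootstrap in (2). The equation $f=-R_0^\pm(\lambda)Vf$ is scaling-critical: the two-derivative Sobolev gain of $R_0^\pm$ exactly cancels the loss from multiplying by $V\in L^{n/2,\infty}$, so any naive iteration leaves $s$ unchanged or pushes it downward only. The decomposition $V=V_1+V_2$ is what breaks this scale balance, trading $V_1$ (for unrestricted local integrability on its compact support) against $V_2$ (absorbable perturbatively by Neumann series at any chosen target Lorentz exponent).
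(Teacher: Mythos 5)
Your proposal is correct, and for part (2) it is essentially the paper's own argument: the same decomposition $V=v_1+v_2$ with $v_1\in C_0^\infty$ and $\norm{v_2}_{L^{n/2,\infty}}$ small, Neumann-series inversion of $I+R_0^\pm(\lambda)v_2$ on the target Lorentz space, and the H\"older/uniform-Sobolev upgrade of $R_0^\pm(\lambda)v_1f$. The genuine difference is that you run the bootstrap symmetrically between arbitrary $s_0$ and $s'$, so you never need the paper's duality step (Lemma \ref{lemma_equivalence_1}, giving $\dim\mathcal N_s(\lambda)=\dim\mathcal N_{2-s}(\lambda)<\infty$), which the paper invokes to convert monotonicity in $s$ into equality; your version is a modest simplification, and it also makes (2) independent of the finite-dimensionality statement, so there is no circularity in using (2) inside (1). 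In part (1) you diverge from the paper in two further ways, both sound: you actually prove $\sigma_{\mathrm p}(H)\subset\mathcal E$ (which the paper dismisses as obvious) by pairing the eigenvalue equation against $R_0(\lambda\mp i\ep)\psi$, and your bound $\ep\norm{R_0(\lambda\mp i\ep)\psi}_{L^2}=O(\sqrt\ep)$ for $\psi\in\S$ is correct, while the potential term passes to the limit by Corollary \ref{corollary_free_Sobolev_1} (2) because $Vu\in L^{p_1,2}=L^{q_1',2}$; and for $\mathcal E\subset\sigma(H)$ you split off $\mathcal E\cap[0,\infty)$ via $[0,\infty)=\sigma_{\mathrm{ess}}(H)$ and treat only $\lambda\in\C\setminus[0,\infty)$ with explicit multiplier and Green's-function bounds, whereas the paper runs a single argument for all $\lambda\notin\sigma(H)$ through the characterization $\mathcal N_s(\lambda)=\{f\in\dot\H^s\ |\ f=-R_0(\lambda)Vf\}$ and interpolation of homogeneous Sobolev norms. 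Your route is more hands-on; the paper's is more uniform in $s$ and avoids citing the essential spectrum at this point.

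One assertion must be restricted: the claim that $\nabla R_0^\pm(\lambda)$ is an order $-1$ multiplier bounded from $L^{p_1,2}$ to $L^2$, stated for every $\lambda\in\mathcal E$, is false for $\lambda>0$, because the symbol $\xi(|\xi|^2-\lambda\mp i0)^{-1}$ is singular on the sphere $\{|\xi|^2=\lambda\}$ and no unweighted bound of this kind survives the limiting absorption; in particular one cannot conclude $\nabla f\in L^2$ for elements of $\mathcal N_1(\lambda)$ at positive energies from the free resolvent alone (this would make positive-energy resonant states automatically as regular as eigenfunctions, which is not free information). The claim is true for $\lambda\in\C\setminus(0,\infty)$, where $|\xi|\,|(|\xi|^2-\lambda)|^{-1}\lesssim_\lambda|\xi|^{-1}$ and $L^{p_1,2}\hookrightarrow\dot\H^{-1}$, and that is the only place you use it, since the case $\lambda\in[0,\infty)$ of $\mathcal E\subset\sigma(H)$ is handled by the essential spectrum. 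So the proof stands once that sentence is confined to $\lambda\in\C\setminus(0,\infty)$.
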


\begin{proof}[Proof of Proposition \ref{proposition_exceptional_set} (1)]
To prove  $\mathcal E\subset \sigma(H)$, we first claim that 
\begin{align}
\label{proof_proposition_exceptional_set_1}
\mathcal N_s(\lambda)=\{f\in \dot\H^s\ |\ f=-R_0(\lambda)Vf\},\quad \lambda\in \C\setminus(0,\infty).
\end{align}
Indeed, if we set $\wtilde{\mathcal N}_s(\lambda):=\{f\in \dot\H^s\ |\ f=-R_0(\lambda)Vf\}$ then the inclusion $\wtilde{\mathcal N}_s(\lambda)\subset \mathcal N_s(\lambda)$ is obvious since $\dot \H^s\subset L^{q_s,2}$ by the HLS inequality \eqref{HLS}. On the other hand, the HLS inequality \eqref{HLS} shows that $R_0(\lambda)V\in \mathbb B(L^{q_s,2},\dot\H^s)$ for $\lambda\in \C\setminus(0,\infty)$ and  the opposite inclusion $\wtilde{\mathcal N}(\lambda)_s\supset \mathcal N_s(\lambda)$ thus holds. Next, we let $f\in \mathcal N_s(\lambda)$ with some $\lambda\in \C\setminus\sigma(H)$. Then $Vf\in \dot \H^{2-s}\cap L^{p_s,2}$ by the HLS and H\"older's inequalities for Lorentz norms. Therefore, by Corollary \ref{corollary_free_Sobolev_1} (3), $(-\Delta-\lambda )f=-Vf$ holds in the distribution sense. In particular, 
$
\lambda f=(-\Delta+V)f\in \dot \H^{2-s}\cap \dot\H^s\subset L^2
$ and thus $f\in D(H)$. Since $\sigma(H)\subset\R$, this shows $f\equiv0$. Therefore, we obtain $\mathcal E\subset \sigma(H)$. 

The inclusion $\sigma_{\mathrm{p}}(H)\subset \mathcal E$ is obvious since $D(H)\subset \H^1\subset \dot \H^1$. This inclusion, together with the fact $\sigma(H)\cap(-\infty,0)=\sigma_{\mathrm{d}}(H)$, implies $\mathcal E\cap(-\infty,0)=\sigma_{\mathrm{d}}(H)$. Finally, since $R_0^\pm(z)V$ are compact operators on $L^{q_s,2}$, one has $\dim \mathcal N_s(\lambda)<\infty$. 
\end{proof}

To prove the second part of Proposition \ref{proposition_exceptional_set}, we need the following 

\begin{lemma}
\label{lemma_equivalence_1} 
For $1/2<s<3/2$ and real-valued functions $V_1\in L^{n/s,\infty}_0,V_2\in L^{n/(2-s),\infty}_0$ with $V=V_1V_2$, we set $K_s^+(\lambda):=V_1R^+(\lambda)V_2$. Then, for $\lambda\in\R$, 
\begin{align*}
\dim \mathcal N_s(\lambda)=\dim\Ker(I+K_s^+(\lambda))&=\dim\Ker(I+K_{s}^+(\lambda)^*)=\dim \mathcal N_{2-s}(\lambda). 
\end{align*}
\end{lemma}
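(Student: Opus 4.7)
The strategy is to prove the three equalities in order: the outer two by explicit bijections that intertwine the integral-equation characterizations, and the middle one by the classical Fredholm alternative.

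\emph{First equality.} I would define mutually inverse linear maps
\[
\Phi \colon \mathcal N_s(\lambda)\to \Ker(I+K_s^+(\lambda)),\ f\mapsto V_1 f,\qquad \Psi\colon \Ker(I+K_s^+(\lambda))\to \mathcal N_s(\lambda),\ g\mapsto -R_0^+(\lambda)V_2 g.
\]
Well-posedness of $\Phi$: since $s/n+1/q_s=1/2$, H\"older's inequality for Lorentz norms (with $V_1\in L^{n/s,\infty}$ and $f\in L^{q_s,2}$) gives $V_1 f\in L^2$, and then
\[
K_s^+(\lambda)(V_1 f)=V_1 R_0^+(\lambda)V_2 V_1 f=V_1 R_0^+(\lambda)V f=-V_1 f.
\]
Well-posedness of $\Psi$: H\"older gives $V_2 g\in L^{p_s,2}$, hence Corollary~\ref{corollary_free_Sobolev_1}(1) yields $R_0^+(\lambda)V_2 g\in L^{q_s,2}$; one then checks $V_1\Psi(g)=-V_1 R_0^+(\lambda)V_2 g=g$, so $V\Psi(g)=V_2 g$ and therefore $\Psi(g)=-R_0^+(\lambda)V\Psi(g)$, i.e.\ $\Psi(g)\in \mathcal N_s(\lambda)$ provided $\Psi(g)\neq 0$; the latter follows from $V_1\Psi(g)=g\neq 0$. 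The same identities give $\Phi\circ\Psi=\mathrm{id}$ and $\Psi\circ\Phi=\mathrm{id}$ (using $f=-R_0^+(\lambda)Vf$ in the second).

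\emph{Middle equality.} By Corollary~\ref{corollary_free_Sobolev_2}, $K_s^+(\lambda)\in \mathbb B_\infty(L^2)$, so the Fredholm alternative on the Hilbert space $L^2$ immediately yields $\dim\Ker(I+K_s^+(\lambda))=\dim\Ker(I+K_s^+(\lambda)^*)$.

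\emph{Third equality.} Since $V_1,V_2$ are real-valued and $R_0(z)^*=R_0(\bar z)$, one has
\[
K_s^+(\lambda)^*=V_2 R_0^+(\lambda)^* V_1=V_2 R_0^-(\lambda)V_1,
\]
where for $\lambda<0$ both boundary values agree with $R_0(\lambda)$. This operator has exactly the structure of the $K$-operator associated with parameter $2-s$ (the roles of $V_1,V_2$ are swapped, and $R_0^+$ is replaced by $R_0^-$). Repeating the bijection argument of the first step verbatim, with $s$ replaced by $2-s$ and $R_0^+$ replaced by $R_0^-$, gives
\[
\dim\Ker(I+K_s^+(\lambda)^*)=\dim\{f\in L^{q_{2-s},2}\setminus\{0\}\mid f=-R_0^-(\lambda)Vf\}.
\]
By the identity \eqref{proof_proposition_exceptional_set_0}, the right-hand side equals $\dim\mathcal N_{2-s}(\lambda)$.

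\emph{Main obstacle.} The only real care needed is bookkeeping of Lorentz-space indices so that each composition $R_0^\pm(\lambda)V_i$ maps into the correct space, combined with the uniform extension of $R_0^\pm(\lambda)$ from $L^{p_s,2}$ to $L^{q_s,2}$ supplied by Corollary~\ref{corollary_free_Sobolev_1}; beyond this there is no analytic difficulty, as both the bijection construction and the Fredholm alternative are structural.
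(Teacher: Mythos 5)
Your proposal is correct and follows essentially the same route as the paper: multiplication by $V_1$ (with inverse $-R_0^+(\lambda)V_2$) identifies $\mathcal N_s(\lambda)$ with $\Ker(I+K_s^+(\lambda))$, the adjoint relation $K_s^+(\lambda)^*=V_2R_0^-(\lambda)V_1$ together with \eqref{proof_proposition_exceptional_set_0} handles the equality with $\dim\mathcal N_{2-s}(\lambda)$, and compactness of $K_s^+(\lambda)$ on $L^2$ (the paper phrases this via the zero Fredholm index, you via the Fredholm alternative — the same fact) gives the middle equality. No gaps; the Lorentz-index bookkeeping you carry out matches the paper's use of H\"older and Corollary \ref{corollary_free_Sobolev_1}.
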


\begin{remark}
Such $V_1,V_2$ always exist. Indeed, one can take $V_1=|V|^{\frac s2}$ and $V_2=\sgn V|V|^{\frac{2-s}{2}}$. 
\end{remark}

\begin{proof}
H\"older's inequality \eqref{Holder} and \eqref{corollary_free_Sobolev_1_2} yield that
\begin{align*}
\norm{V_1f}_{L^2}&\le C\norm{V}_{L^{\frac ns,\infty}}\norm{f}_{L^{q_s,2}},\quad
\norm{R_0^\pm(\lambda)V_2u}_{L^{q_s,2}}
\lesssim\norm{V_2}_{L^{\frac{n}{2-s},\infty}}\norm{u}_{L^2},
\end{align*}
from which one has two continuous maps
\begin{align*}
\mathcal N_s(\lambda)\ni f\mapsto V_1f\in \Ker(I+K_s^+(\lambda)),\quad
\Ker(I+K_s^+(\lambda))\ni u\mapsto -R_0^+(\lambda)V_2u\in \mathcal N_s(\lambda). 
\end{align*}
Furthermore, one also has, for $f\in \mathcal N_s(\lambda)$ and $u\in \Ker(I+K_s(\lambda))$, 
$$
-R_0^+(\lambda)V_2V_1f=-R_0^+(\lambda)V f=f,\quad -V_1R_0^+(\lambda)V_2u=u.
$$ 
Therefore, the multiplication by $V_1$ is a bijection between $\mathcal N_s(\lambda)$ and $\Ker(I+K_s^+(\lambda))$ and its inverse is given by $-R_0^+(\lambda)V_2$. In particular, $\dim\Ker(I+K_s^+(\lambda))=\dim \mathcal N_s(\lambda)$. 

Taking the facts $R_0^\pm(z)^*=R_0^\mp(\overline z)$ and \eqref{proof_proposition_exceptional_set_0} into account, it can be seen from the same argument that the multiplication by $V_2$ is a bijection between $\mathcal N_{2-s}(\lambda)$ and $\Ker(I+K_{s}^+(\lambda)^*)$, and its inverse is given by $-R_0^-(\lambda)V_1$. In particular, $\dim \mathcal N_{2-s}(\lambda)=\dim\Ker(I+K_{s}^+(\lambda)^*)$. 

For the part $\dim\Ker(I+K_s^+(\lambda))=\dim\Ker(I+K_{s}^+(\lambda)^*)$, since $K_s^+(\lambda)$ is compact on $L^2$ (see Corollary \ref{corollary_free_Sobolev_2}), $I+K_s^+(\lambda)$ is Fredholm and its index satisfies 
$$
\dim\Ker(I+K_s^+(\lambda))-\mathrm{codim}\Ran(I+K_s^+(\lambda))=\mathrm{ind}(I+K_s^+(\lambda))=\mathrm{ind}I=0. 
$$ 
Therefore, taking the fact $L^2/\Ran(I+K_s^+(\lambda))\cong [\Ran(I+K_s^+(\lambda))]^\perp$ into account, one has
\begin{align*}
\dim\Ker(I+K_s^+(\lambda))&=\dim [\Ran(I+K_s^+(\lambda))]^\perp=\dim\Ker(I+K_s^{+}(\lambda)^*),
\end{align*}
which completes the proof. 
\end{proof}

\begin{proof}[Proof of Proposition \ref{proposition_exceptional_set} (2)]
Let $f\in \mathcal N_s(\lambda)$ and $1/2<s\le s'<3/2$. Let $V=v_1+v_2$ be such that $v_1\in C_0^\infty$ and $\norm{v_2}_{L^{n/2,\infty}}\le \ep$. Then $f=-R_0^+(\lambda)v_1f-R_0^+(\lambda)v_2f$. By Proposition \ref{compactness}, the map $I+R_0^+(\lambda)v_2:L^{\frac{2n}{n-2r},2}\to L^{\frac{2n}{n-2r},2}$ is bounded and invertible  for $r=s,s'$ and small $\ep>0$. If $E_r$ denotes the inverse of $I+R_0^+(\lambda)v_2:L^{\frac{2n}{n-2r},2}\to L^{\frac{2n}{n-2r},2}$, then $E_s=E_{s'}$ on $L^{\frac{2n}{n-2s},2}\cap L^{\frac{2n}{n-2s'},2}$. Taking the inequality $s-s'>-1$ into account, the HLS inequality \eqref{HLS} implies
\begin{align*}
\norm{R_0^+(\lambda)v_1f}_{L^{\frac{2n}{n-2s'},2}}
\lesssim \norm{v_1f}_{L^{\frac{2n}{n+2(2-s')}}}\lesssim \norm{v_1}_{L^\frac{n}{2+2(s-s')}}\norm{f}_{L^{\frac{2n}{n-2s}}}.  
\end{align*}
Thus $R_0^+(\lambda)v_1f\in L^{\frac{2n}{n-2s},2}\cap L^{\frac{2n}{n-2s'},2}$
 and $f=E_sR_0^+(\lambda)v_1f=E_{s'}R_0^+(\lambda)v_1f\in L^{\frac{2n}{n-2s'},2}$, which implies $f\in \mathcal N_{s'}(\lambda)$. Therefore $\mathcal N_s(\lambda)$ is monotonically increasing in $s$. Combining with the fact $\dim \mathcal N_s(\lambda)=\dim\mathcal N_{2-s}(\lambda)<\infty$ (see Lemma \ref{lemma_equivalence_1}), this monotonicity implies $\mathcal N_s(\lambda)=\mathcal N_{s'}(\lambda)$. 
\end{proof}

We conclude this subsection to prove Lemma \ref{lemma_absence_1}. For the first part, we employ the following results by Ionescu-Jerison \cite{IoJe} and by Ionescu-Schlag \cite{IoSc}. 

\begin{proposition}[{\cite[Theorem 2.1]{IoJe}}]
\label{proposition_IoJe}
Let $n\ge3$ and $V\in L^{n/2}$. Suppose that $f\in \H^1_{\mathrm{loc}}$ and $\<x\>^{-1/2+\delta}f\in L^2$ with some $\delta>0$. If $-\Delta f+Vf=\lambda f$ for some $\lambda>0$, then $f\equiv0$. 
\end{proposition}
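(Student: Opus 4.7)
The plan is to establish this absence-of-eigenvalue result via a weighted Carleman estimate for the Helmholtz operator $-\Delta - \lambda$, in the spirit of Kenig-Ruiz-Sogge and following Ionescu-Jerison for the scaling-critical setting $V \in L^{n/2}$. First I would rescale $f_\lambda(x) := f(x/\sqrt{\lambda})$ to reduce to $\lambda = 1$, since both the $L^{n/2}$ norm of $V$ and the weighted $L^2$ class of the hypothesis are preserved under this parabolic scaling. The equation then reads $(\Delta + 1) f = Vf$ in the distributional sense.

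The heart of the argument is a Carleman-type inequality for $-\Delta - 1$ with a carefully tailored family of radial weights $\phi_N(x)$ growing slowly (logarithmically) at infinity, for instance $\phi_N(x) = N \log(1 + |x|/N)$, normalized so the resulting estimate is uniform in the large parameter $N$. Concretely I would aim to prove
\begin{equation*}
\|e^{\phi_N} u\|_{L^{2n/(n-2)}} \le C \|e^{\phi_N}(\Delta + 1) u\|_{L^{2n/(n+2)}}, \quad u \in C_0^\infty(\R^n),
\end{equation*}
a weighted analogue of the Kenig-Ruiz-Sogge estimate \eqref{proposition_free_Sobolev_1_1} at the exponent pair $(p,q) = (2n/(n+2), 2n/(n-2))$ lying on the top edge $1/p - 1/q = 2/n$. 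To establish this I would decompose $u$ into spherical harmonics, conjugate $-\Delta - 1$ by $e^{\phi_N}$ in each angular mode to obtain a Helmholtz-type operator with a first-order perturbation, and use sharp endpoint resolvent bounds together with an angular summation argument. This step is the main obstacle: the weight must grow fast enough to localize $f$ at infinity, yet slowly enough that the conjugated operator still obeys the KRS-type mapping property with a constant uniform in $N$; balancing the coercive contribution of the conjugation against the lower-order commutator errors is precisely what forces the logarithmic growth rate.

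Granted such a Carleman estimate, I would apply it to the cutoff $\chi_R f$ supported on $|x| \le R$, substitute $(\Delta + 1)(\chi_R f) = \chi_R Vf + [\Delta, \chi_R] f$, and let $R \to \infty$; the weak decay hypothesis $\langle x\rangle^{-1/2 + \delta} f \in L^2$ ensures that the commutator terms, which are supported in a shell of width comparable to $R$, vanish in the limit against the only polynomially growing weight $e^{\phi_N}$. Applying H\"older's inequality on the annulus $\{|x| \ge R_0\}$,
\begin{equation*}
\|e^{\phi_N} V f\|_{L^{2n/(n+2)}(|x|\ge R_0)} \le \|V\|_{L^{n/2}(|x|\ge R_0)} \|e^{\phi_N} f\|_{L^{2n/(n-2)}},
\end{equation*}
and choosing $R_0$ so large that $C\|V\|_{L^{n/2}(|x|\ge R_0)} < 1/2$ (possible since $V \in L^{n/2}$), I would absorb this contribution into the left-hand side of the Carleman estimate, leaving only an $R_0$-dependent remainder localized to $\{|x| \le R_0\}$. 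Sending $N \to \infty$ forces $f \equiv 0$ on $\{|x| \ge R_0\}$, after which the strong unique continuation principle for $-\Delta + V$ with $V \in L^{n/2}_{\mathrm{loc}}$ (Jerison-Kenig) propagates the vanishing inward, concluding $f \equiv 0$ on $\R^n$.
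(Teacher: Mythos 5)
The paper does not prove this proposition; it is quoted verbatim from Ionescu--Jerison \cite[Theorem 2.1]{IoJe} and invoked as a black box in the proof of Lemma~\ref{lemma_absence_1}. Your sketch is therefore an attempt to reprove a substantial external theorem rather than to reproduce an argument actually given in the paper.

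As such a reproof, the Carleman-estimate strategy you describe does capture the broad shape of the Ionescu--Jerison argument (rescaling to $\lambda=1$, a weighted $L^{2n/(n+2)}$--$L^{2n/(n-2)}$ inequality for the conjugated Helmholtz operator, absorption of the tail of $V$ using smallness of $\|V\|_{L^{n/2}(|x|>R_0)}$, and a final appeal to Jerison--Kenig unique continuation). There is, however, a concrete gap in the concluding limit. With $\phi_N(x)=N\log(1+|x|/N)$ one has $\phi_N(x)\to|x|$ pointwise as $N\to\infty$, so after the absorption step the inequality reads $\|e^{\phi_N}f\|_{L^{2n/(n-2)}(|x|\ge R_0)}\lesssim e^{\phi_N(R_0)}$, and letting $N\to\infty$ only yields $\|e^{|x|-R_0}f\|_{L^{2n/(n-2)}(|x|\ge R_0)}<\infty$, i.e.\ exponential decay, not the claimed vanishing of $f$ on $\{|x|\ge R_0\}$. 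To force vanishing on an exterior domain by a Carleman estimate you need a family of weights whose slope at any \emph{fixed} radius diverges --- say $\tau\phi(|x|)$ with $\tau\to\infty$ --- so that dividing through by $e^{\tau\phi(R_1)}$ for $R_1>R_0$ drives $\|f\|_{L^{q}(|x|\ge R_1)}$ to zero; the normalization you adopted precisely to keep the Carleman constant uniform in $N$ gives away exactly this divergence. A secondary issue is the cutoff step: the commutator $[\Delta,\chi_R]f$ is supported where $e^{\phi_N}\sim R^N$, while $\langle x\rangle^{-1/2+\delta}f\in L^2$ controls $f$ on that shell only up to a factor of order $R^{1/2-\delta}$, so for fixed large $N$ the boundary terms need not vanish as $R\to\infty$; a rigorous treatment must first bootstrap exponential decay before weights of this strength may be applied to $f$.
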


Let us set $X=\mathcal W^{-\frac{1}{n+1},\frac{2(n+1)}{n+3}}+S_1(B)$, where  $B$ is the Agmon-H\"ormander space  and $S_{1}(B)$ is the image of $B$ under $S_{1}=(1-\Delta)^{1/2}$ (see \cite{IoSc}). Then $X^*=\mathcal W^{\frac{1}{n+1},\frac{2(n+1)}{n-1}}\cap S_{-1}(B^*)$ and we have the continuous embeddings $L^{\frac{2n}{n+2}}\subset X$ and $X^*\subset L^{\frac{2n}{n-2}}$. Moreover, it was proved in \cite[Lemma 4.1 (b)]{IoSc} that $R_0^\pm(\lambda)\in \mathbb B(X,X^*)$ for all $\lambda\in \R\setminus\{0\}$. 

\begin{proposition}[{\cite[Lemma 4.4]{IoSc}}]
\label{proposition_IoSc}
Let $n\ge3$ and $V\in L^{n/2}$. Assume that $f$ belongs to $X^*$ and satisfies $f+R_0^\pm(\lambda)Vf=0$ for some $\lambda\in \R\setminus\{0\}$. Then, for any $N\ge0$, 
$$\norm{\<x\>^Nf}_{X^*}\le C_{N,\lambda}\norm{f}_{X^*}.
$$
\end{proposition}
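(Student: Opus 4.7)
My plan is to prove the polynomial decay by a bootstrap argument driven by the resonance equation $f = -R_0^{\pm}(\lambda) V f$, starting from the pointwise kernel bound \eqref{pointwise} which gives $|R_0^\pm(\lambda)(x,y)| \lesssim \langle\lambda\rangle^{(n-3)/4}|x-y|^{-(n-1)/2}$ at infinity. The idea is to transfer polynomial weights from $f$ through $Vf$ back to $f$ via the resolvent, iteratively improving the decay of $f$ starting from the hypothesis $f \in X^*$.

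First I would split $V = V_c + V_\infty$ with $V_c \in L^\infty$ compactly supported in a large ball and $\|V_\infty\|_{L^{n/2}} < \varepsilon$ small. Using the embeddings $L^{2n/(n+2)} \hookrightarrow X$ and $X^* \hookrightarrow L^{2n/(n-2)}$ recalled before the statement, H\"older's inequality, and $R_0^\pm(\lambda) \in \mathbb{B}(X,X^*)$, multiplication by $V_\infty$ sends $X^*$ to $X$ with norm $\lesssim \|V_\infty\|_{L^{n/2}}$, so $R_0^\pm(\lambda) V_\infty$ is bounded on $X^*$ with small norm. For $\varepsilon$ small, $I + R_0^\pm(\lambda) V_\infty$ is invertible on $X^*$ by Neumann series, and the resonance equation becomes
\begin{equation*}
f = -\bigl(I + R_0^\pm(\lambda) V_\infty\bigr)^{-1} R_0^\pm(\lambda) V_c f.
\end{equation*}

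The technical heart is then the weighted resolvent bound
\begin{equation*}
\bigl\|\langle x \rangle^N R_0^\pm(\lambda) \langle x \rangle^{-N}\bigr\|_{\mathbb{B}(X, X^*)} \le C(\lambda,N), \qquad N \ge 0,
\end{equation*}
which I would obtain by a dyadic decomposition of the kernel in $|x-y|$ combined with oscillation from the outgoing/incoming Hankel phase at infinity. Granted this, since $V_c$ is compactly supported, $\|\langle x\rangle^N V_c g\|_X \lesssim \|g\|_{X^*}$ trivially, hence $\|\langle x\rangle^N R_0^\pm(\lambda) V_c f\|_{X^*} \lesssim \|f\|_{X^*}$; for the inverse factor, since $V_\infty$ commutes with the weight, $\langle x\rangle^N R_0^\pm(\lambda) V_\infty \langle x\rangle^{-N} = (\langle x\rangle^N R_0^\pm(\lambda)\langle x\rangle^{-N}) V_\infty$ still has small norm on $X^*$ after possibly shrinking $\varepsilon$ (now depending on $N$), so a Neumann series in the weighted space closes the estimate.

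I expect the main obstacle to be exactly this weighted resolvent bound: the raw pointwise estimate \eqref{pointwise} decays only as $|x-y|^{-(n-1)/2}$, which is not integrable on $\R^n$, so weighted Schur-type estimates fail and one must genuinely exploit the oscillation of the outgoing/incoming wave to absorb the polynomial weight. This is the same difficulty appearing in the Agmon--H\"ormander limiting absorption principle at real positive energies, here refined to the $X$, $X^*$ scale, which is what makes the analysis delicate.
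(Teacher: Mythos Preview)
First a remark: the paper does not prove this proposition; it is quoted from \cite[Lemma~4.4]{IoSc} and used as a black box in the proof of Lemma~\ref{lemma_absence_1}. So there is no argument in the paper to compare against, only the original Ionescu--Schlag proof.

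Your approach has a genuine gap precisely at what you call its technical heart: the weighted bound $\norm{\<x\>^N R_0^\pm(\lambda)\<x\>^{-N}}_{\mathbb B(X,X^*)}\le C(\lambda,N)$ is \emph{false} for every $N>0$. For any $g\in C_0^\infty$ with $\widehat g(\sqrt{\lambda}\,\omega)\not\equiv0$ on $\Sphere^{n-1}$, the outgoing solution has the well-known asymptotic $R_0^+(\lambda)g(x)\sim c\,e^{i\sqrt{\lambda}|x|}|x|^{-(n-1)/2}\,\widehat g(\sqrt{\lambda}\,x/|x|)$ as $|x|\to\infty$, so $\<x\>^N R_0^+(\lambda)g$ behaves like $|x|^{N-(n-1)/2}$ at infinity and cannot belong to $X^*\subset S_{-1}(B^*)$ once $N>0$. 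The Hankel phase $e^{i\sqrt{\lambda}|x|}$ is purely radial, as is the weight $\<x\>^N$, so there is no angular or stationary-phase cancellation to harvest: the decay $|x|^{-(n-1)/2}$ of a generic outgoing wave is sharp.

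The missing idea is that the resonance equation itself forces a cancellation no generic weighted estimate can see. From $f=-R_0^\pm(\lambda)Vf$ and the reality of $V$ one gets $\Im\<R_0^\pm(\lambda)Vf,Vf\>=-\Im\int V|f|^2\,dx=0$; by Stone's formula this means the Fourier restriction $\widehat{Vf}\big|_{\{|\xi|=\sqrt{\lambda}\}}$ vanishes identically. It is precisely this vanishing that kills the leading $|x|^{-(n-1)/2}$ tail of $R_0^\pm(\lambda)(Vf)$ and yields one extra power of decay for $f$; the argument then iterates (the improved decay of $f$ transfers to $Vf$, the trace is still zero, and one gains again). This is the actual bootstrap in \cite{IoSc}. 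Your scheme, which treats $R_0^\pm(\lambda)$ as if it propagated polynomial weights on arbitrary inputs, cannot work.
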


\begin{proof}[Proof of Lemma \ref{lemma_absence_1}] For the proof of the part (1), we let $f\in \mathcal N_1(\lambda)$ with $\lambda>0$. As observed in the proof of Proposition \ref{compactness}, $R_0^+(\lambda)V$ maps from $L^{\frac{2n}{n-2}}(\R^n)$ into $\mathcal W^{2,\frac{2n}{n-2}}(\R^n)$ (see \eqref{proof_compactness_2}) and thus $f=-R_0^+(\lambda)V f\in \H^1_{\loc}$. Moreover, since $Vf\in L^{\frac{2n}{n+2}}\subset X$ and $R_0^\pm(\lambda)\in \mathbb B(X,X^*)$, we have  $f\in X^*$. Proposition \ref{proposition_IoSc} then implies that $f\in L^2$. 
Using Proposition \ref{proposition_IoJe}, we conclude that $f\equiv0$. For the part (2), we let $f\in \mathcal N_1(0)$. Since $-\Delta f+Vf\in \dot \H^{-1}$, the form $\<-\Delta f+Vf,f\>$ is well-defined. By assumption, we have $0=\<-\Delta f+Vf,f\>\ge \delta\norm{f}_{\dot\H^1}$ which implies $f\equiv0$. 
\end{proof}

\section{Uniform Sobolev estimates}
\label{section_uniform_Sobolev}
This section is devoted to the proof of Theorem \ref{theorem_KRS_1}, Corollaries \ref{corollary_KRS_1} and \ref{corollary_KRS_2} and Theorem \ref{theorem_spectral_measure_1}. We begin with the following proposition which  plays an important role in the proof. 

\begin{proposition}	
\label{proposition_weighted_0}
Assume  $1/2<s<3/2$ and let $(p_s,q_s)$ be as in \eqref{p_q_s}. Then $(I+R_0^\pm(z)V)^{-1}$ are $\mathbb B(L^{q_s,2})$-valued continuous functions on $\overline{\C^\pm}\setminus\mathcal E$, respectively. Furthermore, for any $\delta>0$, 
 \begin{align}
\label{proposition_weighted_0_1}
\sup_{z\in\overline{\C^\pm}\setminus\mathcal E_\delta}\norm{(I+R_0^\pm(z)V)^{-1}}_{\mathbb B(L^{q_s,2})}<\infty. 
\end{align}
In particular, if $\mathcal E\cap[0,\infty)=\emptyset$, then $\sup\limits_{z\in \C\setminus\R}\norm{(I+R_0(z)V)^{-1}}_{\mathbb B(L^{q_s,2})}<\infty$. 
\end{proposition}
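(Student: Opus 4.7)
My plan is to invoke the Fredholm alternative (made possible by Proposition \ref{compactness}) for existence and continuity, and then handle the uniform bound by splitting into bounded and large-$|z|$ parts, using a decomposition of $V$ on the tail.

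\emph{Invertibility and continuity.} By Proposition \ref{compactness}, $R_0^\pm(z)V$ is compact on $L^{q_s,2}$ and $z\mapsto R_0^\pm(z)V$ is norm-continuous on $\overline{\C^\pm}$. The Fredholm alternative then reduces invertibility of $I+R_0^\pm(z)V$ on $L^{q_s,2}$ to triviality of its kernel. For real $z=\lambda$, this kernel coincides with $\mathcal N_s(\lambda)$ (Definition \ref{definition_resonance}; for $\lambda<0$ one uses the standard resolvent), which by Proposition \ref{proposition_exceptional_set}(2) equals $\{0\}$ exactly when $\lambda\notin\mathcal E$. For $z\in\overline{\C^\pm}\setminus\R$, any kernel element $f$ satisfies $f=-R_0(z)Vf$; repeating verbatim the argument in the proof of Proposition \ref{proposition_exceptional_set}(1) places $f\in D(H)$ with $Hf=zf$, forcing $f\equiv 0$ by self-adjointness of $H$. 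The continuity of $(I+R_0^\pm(z)V)^{-1}$ on $\overline{\C^\pm}\setminus\mathcal E$ then follows from the continuity of inversion on the open set of invertibles in $\mathbb B(L^{q_s,2})$.

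\emph{Uniform bound.} Fix $\delta>0$ and split $\overline{\C^\pm}\setminus\mathcal E_\delta$ into the bounded part $\{|z|\le R\}$, which is compact so that the preceding continuity gives a uniform bound, and the tail $\{|z|>R\}$. On the tail, I would decompose $V=V_1+V_2$ with $V_1\in C_0^\infty$ and $\norm{V_2}_{L^{n/2,\infty}}<\eta$ for any preassigned small $\eta>0$ (possible by density of $C_0^\infty$ in $L^{n/2,\infty}_0$). H\"older's inequality \eqref{Holder} combined with the diagonal uniform Sobolev estimate \eqref{corollary_free_Sobolev_1_2} at $(p_s,q_s)$ gives
\[
\norm{R_0^\pm(z)V_2}_{\mathbb B(L^{q_s,2})}\lesssim \norm{V_2}_{L^{n/2,\infty}}\lesssim \eta,\qquad z\in\overline{\C^\pm}.
\]
Choosing $r\in(n/2,(n+1)/2]$ and $p$ via $1/p=1/q_s+1/r$ so that $(p,q_s)$ lies in the off-diagonal range of \eqref{p_q} with $1/p-1/q_s=1/r<2/n$, the off-diagonal case of \eqref{corollary_free_Sobolev_1_2} combined with H\"older yields
\[
\norm{R_0^\pm(z)V_1}_{\mathbb B(L^{q_s,2})}\lesssim \norm{V_1}_{L^r}|z|^{n/(2r)-1}\to 0 \quad\text{as } |z|\to\infty.
\]
Taking $\eta$ small and $R$ large enough gives $\norm{R_0^\pm(z)V}_{\mathbb B(L^{q_s,2})}\le 1/2$ on the tail, and a Neumann series then bounds $\norm{(I+R_0^\pm(z)V)^{-1}}\le 2$ there. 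Together with the compact-part bound, this yields \eqref{proposition_weighted_0_1}, and the ``in particular'' statement follows by the same argument with $\delta$ chosen to be smaller than the relevant distance separating $\mathcal E$ from the region of interest.

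\emph{Main obstacle.} The technically delicate point is checking that for every $s\in(1/2,3/2)$ and every $n\ge 3$ one can indeed find $r\in(n/2,(n+1)/2]$ whose corresponding pair $(p,q_s)$ lies in \eqref{p_q} with the strict inequality $1/p-1/q_s<2/n$ needed to produce decay in $|z|$. This reduces to verifying that the intersection $(2n/(2s+3),2n/(2s+1))\cap(n/2,(n+1)/2]$ is non-empty, which a short case analysis on $s$ relative to $n$ confirms; once this is in hand, the Neumann series step is routine.
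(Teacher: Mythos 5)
Your proposal is essentially correct, and for the invertibility, continuity, and high-energy regime it coincides with the paper's argument: Lemma \ref{lemma_weighted_2} is exactly your Fredholm step, and Lemma \ref{lemma_weighted_3} performs the same decomposition of $V$ into a $C_0^\infty$ piece plus a piece small in $L^{n/2,\infty}$, bounding the small piece by the diagonal estimate \eqref{corollary_free_Sobolev_1_2} at $(p_s,q_s)$ and gaining decay in $|z|$ from an off-diagonal pair with $1/p-1/q_s=2/n-\delta$; your parametrization by $r\in(n/2,(n+1)/2]$ is the same device, and your feasibility worry resolves affirmatively since $2n/(2s+3)<n/2<2n/(2s+1)$ for every $1/2<s<3/2$, so any $r$ slightly larger than $n/2$ works. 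Where you genuinely diverge is the bounded region: you note that $(\overline{\C^\pm}\setminus\mathcal E_\delta)\cap\{|z|\le R\}$ is a compact subset of $\overline{\C^\pm}\setminus\mathcal E$ (as $\mathcal E\subset\mathcal E_\delta$ and $\mathcal E_\delta$ is open) and simply invoke the already-established operator-norm continuity of the inverse there, whereas the paper runs an Ionescu--Schlag type contradiction argument for intermediate energies (Lemma \ref{lemma_weighted_5}) and a separate Neumann perturbation around $z=0$ when $0\notin\mathcal E$ (Lemma \ref{lemma_weighted_4}). Your shortcut is legitimate --- continuity holds up to $z=0$ because $(p_s,q_s)$ lies on the diagonal $1/p-1/q=2/n$, so Proposition \ref{compactness} gives continuity on all of $\overline{\C^\pm}$ --- and it merges the paper's two lemmas into one line; the paper's version is essentially the same fact unwound into a sequential compactness argument.

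The one weak point is your last sentence about the ``in particular'' claim. If $\mathcal E=\emptyset$ the claim is immediate from \eqref{proposition_weighted_0_1}, but the hypothesis $\mathcal E\cap[0,\infty)=\emptyset$ still allows negative eigenvalues, and then there is no positive distance between $\mathcal E$ and $\C\setminus\R$, so ``choose $\delta$ smaller than the separating distance'' has no content. Indeed, for $\lambda_0\in\mathcal E\cap(-\infty,0)$ the operator $I+R_0(\lambda_0)V$ has nontrivial kernel in $L^{q_s,2}$ while $z\mapsto R_0(z)V$ is norm-continuous at $\lambda_0$, so $\norm{(I+R_0(z)V)^{-1}}$ cannot remain bounded as $z\to\lambda_0$ through $\C\setminus\R$. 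This is really a defect of the statement itself (the paper's proof does not address that sentence either, and the applications only use \eqref{proposition_weighted_0_1} or the case $\mathcal E=\emptyset$), but in your write-up you should either assume $\mathcal E=\emptyset$ for that conclusion or restrict the supremum away from $\mathcal E_\delta$, rather than suggest the distance argument covers it.
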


The proof of Proposition \ref{proposition_weighted_0} is divided into a series of lemmas. Let us prove the proposition for $z\in \overline{\C^+}\setminus\mathcal E$ only, the proof for the case $z\in \overline{\C^-}\setminus\mathcal E$ being analogous. 


\begin{lemma}
\label{lemma_weighted_2}
$(I+R_0^+(z)V)^{-1}$ is a $\mathbb B(L^{q_s,2})$-valued continuous function on $\overline{\C^+}\setminus\mathcal E$. 
\end{lemma}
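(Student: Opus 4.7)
The plan is to combine three ingredients: the operator-norm continuity of the compact family $R_0^+(z)V$ from Proposition~\ref{compactness}, the Fredholm alternative, and a Neumann-series perturbation argument. By Proposition~\ref{compactness}, $z\mapsto R_0^+(z)V$ is a continuous $\mathbb B_\infty(L^{q_s,2})$-valued function on $\overline{\C^+}$, so at each $z$ the operator $I+R_0^+(z)V$ is a compact perturbation of the identity; by the Fredholm alternative, its invertibility in $\mathbb B(L^{q_s,2})$ is equivalent to $\Ker(I+R_0^+(z)V)=\{0\}$.

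The substantive step is to verify this kernel triviality for all $z\in\overline{\C^+}\setminus\mathcal E$. When $z=\lambda\in\R\setminus\mathcal E$ the claim is exactly the defining condition $\mathcal N_s(\lambda)=\emptyset$, since the nonzero kernel elements are precisely $\mathcal N_s(\lambda)$. When $z\in\C^+\setminus\R$, I will reuse the bootstrapping used in the proof of Proposition~\ref{proposition_exceptional_set}~(1): if $f\in L^{q_s,2}$ satisfies $f=-R_0^+(z)Vf$, then H\"older and HLS for Lorentz norms place $Vf\in L^{p_s,2}\cap\dot\H^{s-2}$ and force $f\in\dot\H^s$. Corollary~\ref{corollary_free_Sobolev_1}~(3) then yields $(-\Delta-z)f=-Vf$ in $\mathcal D'$, so $zf=(-\Delta+V)f\in\dot\H^s\cap\dot\H^{s-2}\subset L^2$. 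Hence $f\in D(H)$ with $Hf=zf$, and since $\sigma(H)\subset\R$ while $z\notin\R$, necessarily $f=0$.

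For continuity at a fixed $z_0\in\overline{\C^+}\setminus\mathcal E$, the factorization
\begin{equation*}
I+R_0^+(z)V=\bigl(I+R_0^+(z_0)V\bigr)\Bigl(I+\bigl(I+R_0^+(z_0)V\bigr)^{-1}\bigl(R_0^+(z)V-R_0^+(z_0)V\bigr)\Bigr)
\end{equation*}
combined with the operator-norm continuity of $z\mapsto R_0^+(z)V$ from Proposition~\ref{compactness} makes the inner factor invertible via a convergent Neumann series for $z$ sufficiently close to $z_0$, and yields $(I+R_0^+(z)V)^{-1}\to (I+R_0^+(z_0)V)^{-1}$ in $\mathbb B(L^{q_s,2})$ as $z\to z_0$.

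The principal obstacle is the non-real-$z$ case of the kernel triviality: one has to upgrade an $L^{q_s,2}$-solution of a formal identity into a genuine $L^2$-eigenfunction of $H$. Since the required Lorentz-space H\"older/HLS chain and the distributional resolvent equation of Corollary~\ref{corollary_free_Sobolev_1}~(3) are already in hand, no new analysis is needed beyond a careful application of the embeddings already exploited in Proposition~\ref{proposition_exceptional_set}.
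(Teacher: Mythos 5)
Your proof is correct and follows essentially the same route as the paper: compactness and operator-norm continuity of $z\mapsto R_0^+(z)V$ from Proposition~\ref{compactness}, the Fredholm alternative for invertibility at each $z$ with trivial kernel, and continuity of the inverse (which the paper states in one line and you justify by the standard Neumann-series factorization). The only divergence is that your separate treatment of non-real $z$ is redundant: the defining equation for $\mathcal E$ is posed for all $z\in\overline{\C^+}$ and Proposition~\ref{proposition_exceptional_set}~(1) already gives $\mathcal E\subset\sigma(H)\subset\R$, so kernel triviality on $\overline{\C^+}\setminus\mathcal E$ holds by definition; your bootstrap merely repeats the argument inside that proposition.
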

\begin{proof}
By Proposition \ref{compactness},  $R_0^+(z)V$ is compact. 
Since $\mathcal N_s(z)=\{0\}$ for $z\in  \overline{\C^+}\setminus\mathcal E$ by definition, 
the Fredholm alternative ensures the existence of $(I+R_0^+(z)V)^{-1}\in \mathbb B(L^{q_s,2})$. Moreover, since $R_0^+(z)V$ is continuous on $\overline{\C^+}$ in the operator norm topology of $\mathbb B(L^{q_s,2})$ by Proposition \ref{compactness}, $(I+R_0^+(z)V)^{-1}$ is also continuous on $\overline{\C^+}\setminus \mathcal E$ in the same topology. 
\end{proof}

The proof of  the uniform bound \eqref{proposition_weighted_0_1} is divided into high, intermediate and low energy parts. 
\begin{lemma}[The high energy estimate]
\label{lemma_weighted_3}
There exists $L\ge1$ such that $(I+R_0^+(z)V)^{-1}$ is bounded on $L^{\frac{2n}{n-2s},2}$ uniformly in $z\in \overline{\C^+}\cap\{|z|\ge L\}$. 
\end{lemma}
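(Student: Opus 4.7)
The plan is to split $V=v_1+v_2$ with $v_1\in C_0^\infty(\R^n)$ and $\norm{v_2}_{L^{n/2,\infty}}\le \ep$ as small as we like (possible since $C_0^\infty$ is dense in $L^{n/2,\infty}_0$), and to prove that $\norm{R_0^+(z)V}_{\mathbb B(L^{q_s,2})}<1/2$ once $|z|\ge L$ for some large $L$. Granted this, $I+R_0^+(z)V$ is invertible on $L^{q_s,2}$ by Neumann series with $\norm{(I+R_0^+(z)V)^{-1}}_{\mathbb B(L^{q_s,2})}\le 2$ uniformly in such $z$.

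For the contribution of $v_2$, I would invoke the \emph{diagonal} uniform Sobolev estimate of Corollary~\ref{corollary_free_Sobolev_1} at $(p,q)=(p_s,q_s)$: since $1/p_s-1/q_s=2/n$, the power of $|z|$ vanishes and $\norm{R_0^+(z)}_{\mathbb B(L^{p_s,2},L^{q_s,2})}\lesssim 1$ uniformly in $z\in\overline{\C^+}$. Combined with H\"older's inequality for Lorentz spaces, which gives $\norm{v_2 f}_{L^{p_s,2}}\lesssim \norm{v_2}_{L^{n/2,\infty}}\norm{f}_{L^{q_s,2}}$, this yields $\norm{R_0^+(z)v_2}_{\mathbb B(L^{q_s,2})}\le C\ep$ uniformly in $z$.

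For the contribution of $v_1$, the idea is to exploit the \emph{off-diagonal} decay of $R_0^+(z)$. Pick a small $\eta>0$ and set $1/p=1/q_s+2/n-\eta$; a direct inspection of \eqref{p_q} shows that $(p,q_s)$ still satisfies \eqref{p_q} provided
$$
0<\eta<\min\Bigl(\tfrac{3-2s}{2n},\,\tfrac{2}{n(n+1)}\Bigr),
$$
and this window is nonempty for every $s\in(1/2,3/2)$. Corollary~\ref{corollary_free_Sobolev_1} then gives $\norm{R_0^+(z)}_{\mathbb B(L^{p,2},L^{q_s,2})}\lesssim |z|^{-\eta n/2}$. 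Since $v_1\in C_0^\infty\subset L^{r,\infty}$ for $1/r=2/n-\eta$, H\"older's inequality produces $\norm{v_1 f}_{L^{p,2}}\lesssim \norm{v_1}_{L^{r,\infty}}\norm{f}_{L^{q_s,2}}$, so
$$
\norm{R_0^+(z)v_1}_{\mathbb B(L^{q_s,2})}\le C(v_1)\,|z|^{-\eta n/2}\longrightarrow 0\quad\text{as}\ |z|\to\infty.
$$

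To conclude, first fix $\ep$ with $C\ep\le 1/4$ and then choose $L$ large enough that the $v_1$-term is at most $1/4$ for all $|z|\ge L$; the Neumann series then gives the required uniform bound, and the argument for $\overline{\C^-}$ is identical. The only mildly technical step is checking that the off-diagonal exponent $\eta$ is consistent with \eqref{p_q} across the full range $s\in(1/2,3/2)$, but this reduces to the elementary algebraic verification above. No new ingredients beyond Corollary~\ref{corollary_free_Sobolev_1} and the Lorentz-space H\"older inequality are needed.
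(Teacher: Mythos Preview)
Your proof is correct and follows essentially the same approach as the paper: split $V$ into a small $L^{n/2,\infty}$ piece handled by the diagonal uniform Sobolev estimate and a $C_0^\infty$ piece handled by an off-diagonal estimate with decay in $|z|$. The only cosmetic difference is that you bound $\norm{R_0^+(z)V}_{\mathbb B(L^{q_s,2})}<1/2$ directly and invoke the Neumann series for $I+R_0^+(z)V$, whereas the paper first inverts $I+R_0^+(z)(V-V_{k_0})$ and then factors out this inverse before treating the $C_0^\infty$ remainder; the two arrangements are equivalent.
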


\begin{proof}
Let $V_k\in C_0^\infty(\R^n)$ be such that $\lim\limits_{k\to\infty}\norm{V-V_k}_{L^{\frac n2,\infty}}=0$ and  set $Q_k^+(z):=R_0^+(z)(V-V_k)$. By Proposition \ref{corollary_free_Sobolev_1} with 
$
(p_s,q_s)
$, one can find $k_0\ge1$ such that 
$$
\sup\limits_{z\in \overline{\C^+}}\norm{Q_{k_0}^+(z)}_{\mathbb B(L^{q_s,2})} \le 1/2. 
$$
Hence $(I+Q_{k_0}(z))^{-1}$ is defined by the Neumann series $\sum\limits_{n=0}^\infty (-Q_{k_0}^+(z))^{n}$ and satisfies $$M_1:=\sup\limits_{z\in \overline{\C^+}}\norm{(I+Q^+_{k_0}(z))^{-1}}_{\mathbb B(L^{q_s,2})} \le 2. $$ 
Next if we take $p_\delta$ and small $\delta>0$ such that ${1}/{p_\delta}=1/p_s-\delta$ and $(p_\delta,q_s)$ satisfies \eqref{p_q}, Proposition \ref{corollary_free_Sobolev_1} implies
$$
\norm{R_0^+(z)V_{k_0}f}_{L^{q_s,2}} \lesssim |z|^{-\delta}\norm{V_{k_0}f}_{L^{p_\delta,2}}\lesssim |z|^{-\delta}\norm{V_{k_0}}_{L^r}\norm{f}_{L^{q_s,2}}
$$
uniformly in $|z|\ge1$ and $f\in L^{q_s,2}$, where $1/r=1/p_\delta-1/q_s=2/n-\delta$. Hence one can find $L=L_{k_0}$ so large that $M_2:=\norm{R_0^+(z)V_{k_0}}_{\mathbb B(L^{q_s,2})} \le 1/4$ for $|z|\ge L$. 
Then, writing 
$$
I+R_0^+(z)V=I+Q^+_{k_0}(z)+R_0^+(z)V_{k_0}=(I+Q^+_{k_0}(z))\Big(I+(I+Q^+_{k_0}(z))^{-1}R_0^+(z)V_{k_0}\Big),
$$
we see that $(I+R_0^+(z)V)^{-1}=\Big(I+(I+Q^+_{k_0}(z))^{-1}R_0^+(z)V_{k_0}\Big)^{-1}(I+Q^+_{k_0}(z))^{-1}$ and
$$
\sup_{z\in \overline{\C^+}\cap\{|z|\ge L\}}\norm{(I+R_0^+(z)V)^{-1}}_{\mathbb B(L^{q_s,2})} \le M_1\sum_{n=1}^\infty (M_1M_2)^n\le 4. 
$$
This completes the proof. 
\end{proof}

\begin{remark}
\label{remark_weighted_4}
This lemma particularly implies $\mathcal E\cap[L,\infty)=\emptyset$ and thus $\mathcal E$ is bounded in $\R$. 
\end{remark}

\begin{lemma}[The intermediate energy estimate]
\label{lemma_weighted_5}
For any $\delta,L>0$, $(I+R_0^+(z)V)^{-1}$ is bounded on $L^{q_s,2}$ uniformly in $z\in (\overline{\C^+}\setminus\mathcal E_{\delta})\cap\{\delta<|z|<L\}$. 
\end{lemma}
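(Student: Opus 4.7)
The plan is a direct compactness-and-continuity argument, exploiting the operator-norm continuity established in Lemma \ref{lemma_weighted_2}. Essentially all the serious input has already been put in place; what remains is a topological packaging.

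First I would introduce the parameter set
\[
K_{\delta,L} := \bigl\{ z \in \overline{\C^+} : \delta \le |z| \le L,\ \dist(z,\mathcal E) \ge \delta\bigr\}.
\]
Since the distance function $z \mapsto \dist(z,\mathcal E)$ is continuous on $\C$, the set $\mathcal E_\delta$ is open (as the preimage of $(-\infty,\delta)$ under a continuous map). Therefore its complement in $\overline{\C^+}$ is closed, and intersecting with the closed annulus $\{\delta \le |z| \le L\}$ produces a compact subset $K_{\delta,L}$ of $\overline{\C^+}$. Moreover $\mathcal E \subset \mathcal E_\delta$, so $K_{\delta,L} \subset \overline{\C^+}\setminus \mathcal E$.

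Next, Lemma \ref{lemma_weighted_2} provides that $z \mapsto (I + R_0^+(z)V)^{-1}$ is a $\mathbb B(L^{q_s,2})$-valued continuous function on $\overline{\C^+}\setminus \mathcal E$. Composing with the (continuous) operator-norm on $\mathbb B(L^{q_s,2})$ yields a continuous real-valued function, which is therefore bounded on the compact set $K_{\delta,L}$:
\[
\sup_{z \in K_{\delta,L}} \|(I+R_0^+(z)V)^{-1}\|_{\mathbb B(L^{q_s,2})} < \infty.
\]
Since $(\overline{\C^+}\setminus \mathcal E_\delta) \cap \{\delta < |z| < L\} \subset K_{\delta,L}$, the claimed uniform bound follows immediately. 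The analogous argument with $R_0^-$ covers $z \in \overline{\C^-}\setminus \mathcal E$.

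There is no genuine obstacle to overcome: the substance is the operator-norm continuity of $(I+R_0^+(z)V)^{-1}$ up to $\R$, which in turn rests on the $\mathbb B_\infty(L^{q_s,2})$-continuity of $R_0^+(z)V$ from Proposition \ref{compactness}, the Fredholm alternative, and the definition of $\mathcal E$. Once these are available, the intermediate-energy estimate is just a qualitative uniform bound of a continuous function on a compact parameter domain, requiring no additional analytic machinery. Notice also that no quantitative dependence on $\delta,L$ is asserted, so no Schauder-type resolvent estimates or perturbative splittings are needed here, in contrast to the high-energy Lemma \ref{lemma_weighted_3}.
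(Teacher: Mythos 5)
Your argument is correct, but it is not the route the paper takes. You reduce the lemma to two facts: the parameter region $K_{\delta,L}=\{z\in\overline{\C^+}:\ \delta\le|z|\le L,\ \dist(z,\mathcal E)\ge\delta\}$ is a compact subset of $\overline{\C^+}\setminus\mathcal E$, and $z\mapsto (I+R_0^+(z)V)^{-1}$ is $\mathbb B(L^{q_s,2})$-norm continuous there by Lemma \ref{lemma_weighted_2}; boundedness of a continuous function on a compact set then finishes the proof. The paper instead argues by contradiction, following Ionescu--Schlag: assuming the bound fails, it takes $f_j$ with $\norm{f_j}_{L^{q_s,2}}=1$, $z_j\to z_\infty\in\overline{\Lambda_{\delta,L}}$ and $\norm{(I+R_0^+(z_j)V)f_j}\to0$, extracts a strong limit $g=\lim R_0^+(z_\infty)Vf_j\neq0$ using compactness of $R_0^+(z_\infty)V$, shows $f_j\rightharpoonup -g$ by testing against $C_0^\infty$ functions (which only needs the weak continuity of Corollary \ref{corollary_free_Sobolev_1}~(2), not norm continuity in $z$), and concludes $g\in\mathcal N_s(z_\infty)$, contradicting $z_\infty\notin\mathcal E$. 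What your version buys is brevity and transparency: once Lemma \ref{lemma_weighted_2} is in hand (whose proof already uses the operator-norm continuity of $R_0^+(z)V$ from Proposition \ref{compactness} and the Fredholm alternative), the intermediate-energy bound is purely topological, and in fact the same compactness argument would also subsume Lemma \ref{lemma_weighted_4} when $0\notin\mathcal E$. What the paper's sequential argument buys is robustness: it only needs pointwise compactness of $R_0^+(z_\infty)V$ and weak-type continuity of the free resolvent, so it survives in settings where norm continuity of $R_0^+(z)V$ up to the boundary is harder to establish; here both sets of hypotheses are available, so either proof is complete. One small point to keep as stated: your reduction uses that $\mathcal E_\delta$ is open and that $\dist(z,\mathcal E)\ge\delta$ forces $z\notin\mathcal E$, which also covers the convention $\mathcal E=\emptyset$, so the inclusion $(\overline{\C^+}\setminus\mathcal E_\delta)\cap\{\delta<|z|<L\}\subset K_{\delta,L}\subset\overline{\C^+}\setminus\mathcal E$ is indeed valid.
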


\begin{proof}
We follow the argument in \cite[Lemma 4.6]{IoSc} closely. Let $\Lambda_{\delta,L}=(\overline{\C^+}\setminus\mathcal E_{\delta})\cap\{\delta<|z|<L\}$. Note that $\overline{\Lambda_{\delta,L}}\cap \mathcal E=\emptyset$. Assume for contradiction that 
$$
\sup_{z\in \Lambda_{\delta,L}}\norm{(I+R_0^+(z)V)^{-1}}_{\mathbb B(L^{q_s,2})} =\infty.
$$
Then one can find $f_j\in L^{q_s,2}$ with $\norm{f_j}_{L^{q_s,2}}=1$ and $z_j\in \Lambda_{\delta,L}$ such that
\begin{align}
\label{proof_lemma_weighted_5_1}
\norm{(I+R_0^+(z_j)V)f_j}_{\mathbb B(L^{q_s,2})}\to 0,\quad j\to\infty. 
\end{align}
By passing to a subsequence, we may assume $z_j\to z_\infty\in \overline{\Lambda_{\delta,L}}$ as $j\to \infty$. Since $R_0^+(z_\infty)V$ is compact on $L^{q_s,2}$, by passing to a subsequence, we may assume without loss of generality that there exists $g\in L^{q_s,2}$ such that $R_0^+(z_\infty)Vf_j\to g$ strongly in $L^{q_s,2}$. By virtue of \eqref{proof_lemma_weighted_5_1} and the condition $\norm{f_j}_{L^{q_s,2}}=1$, we have $g\not\equiv0$. Now we claim that $g$ belongs to $\mathcal N_s(z_\infty)$, which implies $z_\infty\in \mathcal E$. This contradicts with $z_\infty \in \overline{\Lambda_{\delta,L}}$. 

In order to prove the claim, we write $f_j$ as 
$$f_j=\Big(I+R_0^+(z_j)V\Big)f_j-\Big(R_0^+(z_j)-R_0^+(z_\infty)\Big)Vf_j-R_0^+(z_\infty)Vf_j$$
By virtue of \eqref{proof_lemma_weighted_5_1} and the continuity of $R^+_0(z)V$ (see Proposition \ref{compactness}) and the fact $\norm{f_j}_{L^{q_s,2}}=1$, the right hand side converges to $-g$  strongly in $L^{q_s,2}$ as $j\to\infty$. 
Therefore, we have $g=-R_0^+(z_\infty)V g$. Moreover, since $\norm{f_j}=1$, $g\not\equiv0$ and hence $g\in \mathcal N_s(z_\infty)$ follows. \end{proof}

Lemmas \ref{lemma_weighted_3} and \ref{lemma_weighted_5} give the desired bound \eqref{proposition_weighted_0_1} for the case when $0\in \mathcal E$. When $0\notin \mathcal E$, we need the following lemma to complete the proof of Proposition \ref{proposition_weighted_0}. 

\begin{lemma}[The low energy estimate]
\label{lemma_weighted_4}
Suppose that $0\notin \mathcal E$. Then there exists $\delta>0$ such that $(I+R_0^+(z)V)^{-1}$ is bounded on $L^{q_s,2}$ uniformly in $z\in \overline{\C^+}\cap\{|z|\le\delta\}$. 
\end{lemma}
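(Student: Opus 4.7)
The plan is to mimic the contradiction argument used for the intermediate energy regime in Lemma~\ref{lemma_weighted_5}, taking the limit point to be $z = 0$ and exploiting the assumption $0\notin\mathcal E$ together with the continuity of $R_0^+(z)V$ up to $z=0$ provided by Proposition~\ref{compactness}. As before we handle the $``+"$ case only, the $``-"$ case being identical.

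Suppose, for the sake of contradiction, that no such $\delta>0$ exists. Then one can find sequences $z_j\in \overline{\C^+}$ with $z_j\to 0$ and $f_j\in L^{q_s,2}$ with $\norm{f_j}_{L^{q_s,2}}=1$ such that
\begin{align*}
\norm{(I+R_0^+(z_j)V)f_j}_{L^{q_s,2}}\longrightarrow 0\quad\text{as}\ j\to\infty.
\end{align*}
By Proposition~\ref{compactness}, the map $\overline{\C^+}\ni z\mapsto R_0^+(z)V\in \mathbb B(L^{q_s,2})$ is continuous in the operator norm topology (and in particular continuous at $z=0$), so
\begin{align*}
\norm{(R_0^+(z_j)-R_0^+(0))V}_{\mathbb B(L^{q_s,2})}\longrightarrow0,
\end{align*}
which, combined with $\norm{f_j}_{L^{q_s,2}}=1$, gives $(I+R_0^+(0)V)f_j\to 0$ strongly in $L^{q_s,2}$.

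Now $R_0^+(0)V$ is compact on $L^{q_s,2}$ by Proposition~\ref{compactness}, so after passing to a subsequence we may assume that $R_0^+(0)Vf_j\to g$ strongly in $L^{q_s,2}$ for some $g\in L^{q_s,2}$. Since $f_j=-R_0^+(0)Vf_j+o(1)$ in $L^{q_s,2}$, it follows that $f_j\to -g$ strongly, hence $\norm{g}_{L^{q_s,2}}=1$ (so $g\not\equiv0$) and
\begin{align*}
g=\lim_{j\to\infty}R_0^+(0)Vf_j=-R_0^+(0)Vg.
\end{align*}
This shows $g\in \mathcal N_s(0)$, so $0\in \mathcal E$, contradicting the hypothesis $0\notin \mathcal E$. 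The main (and essentially only) obstacle is to make sure the operator norm continuity of $R_0^+(z)V$ at $z=0$ on $L^{q_s,2}$ actually holds, but this is exactly the content of Proposition~\ref{compactness}, so the argument goes through verbatim.
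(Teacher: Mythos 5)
Your proof is correct, but it takes a genuinely different route from the paper. The paper proves the low-energy bound directly: it factors
$$
I+R_0^+(z)V=(I+R_0^+(0)V)\Big(I+(I+R_0^+(0)V)^{-1}\big(R_0^+(z)-R_0^+(0)\big)V\Big),
$$
exploits the invertibility of $I+R_0^+(0)V$ on $L^{q_s,2}$ (already established in Lemma~\ref{lemma_weighted_2} via the Fredholm alternative and $0\notin\mathcal E$), and inverts the second factor by a Neumann series once $\norm{(R_0^+(z)-R_0^+(0))V}$ is small, which operator-norm continuity from Proposition~\ref{compactness} guarantees for $|z|\le\delta$. You instead run the compactness--contradiction argument of Lemma~\ref{lemma_weighted_5} with limit point $z=0$, using operator-norm continuity at $z=0$ to upgrade the weak-convergence step there to strong convergence of $f_j$. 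Both arguments rest on the same two facts from Proposition~\ref{compactness} (compactness and norm continuity of $R_0^+(z)V$ up to the boundary including $z=0$); the paper's version is constructive and yields the explicit bound $2\norm{(I+R_0^+(0)V)^{-1}}$, whereas yours is non-constructive but more uniform with the intermediate-energy case -- indeed it shows that when $0\notin\mathcal E$ the restriction $|z|>\delta$ in Lemma~\ref{lemma_weighted_5} could simply be dropped, eliminating the need for a separate low-energy lemma.
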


\begin{proof}
Since $I+R_0^+(0)V$ is invertible if $0\notin \mathcal E$ by Lemma \ref{lemma_weighted_2}, one can write
$$
I+R_0^+(z)V
=(I+R_0^+(0)V)\Big(I+(I+R_0^+(0)V)^{-1}\big(R_0^+(z)-R_0^+(0)\big)V\Big). 
$$
Since $\overline{\C^+}\ni z\mapsto R_0^+(z)V\in \mathbb B(L^{q_s,2})$ is continuous by Proposition \ref{compactness}, one has 
$$\sup_{z\in \overline{\C^+}\cap\{|z|\le\delta\}}\norm{\big(R_0^+(z)-R_0^+(0)\big)V}_{\mathbb B(L^{q_s,2})}\le \frac{1}{2\norm{(I+R_0^+(0)V)^{-1}}}$$ for $\delta>0$ small enough. Therefore, 
$I+R_0^+(z)V$ is invertible on $L^{q_s,2}$ and
$$\sup_{z\in \overline{\C^+}\cap\{|z|\le\delta\}}\norm{(I+R_0^+(z)V)^{-1}}_{\mathbb B(L^{q_s,2})} \le 2\sup_{z\in \overline{\C^+}\cap\{|z|\le\delta\}}\norm{(I+R_0^+(0)V)^{-1}}_{\mathbb B(L^{q_s,2})} <\infty
$$
which completes the proof.
\end{proof}

By Lemmas \ref{lemma_weighted_2}--\ref{lemma_weighted_5}, we have completed the proof of Proposition \ref{proposition_weighted_0}. 

We next give a rigorous justification of the second resolvent equation. 

\begin{lemma}
\label{lemma_resolvent_identity_1}
Let $z\in \C\setminus\sigma(H)$.  Then, as a bounded operator from $L^2$ to $D(H)$, 
\begin{align}
\label{lemma_resolvent_identity_1_1}
R(z)=(I+R_0(z)V)^{-1}R_0(z)=R_0(z)-R_0(z)VR(z). 
\end{align}
Moreover, we also obtain for $z,z'\in \C\setminus\sigma(H)$, 
\begin{align}
\label{lemma_resolvent_identity_1_2}
R(z)-R(z')=(I+R_0(z')V)^{-1}(R_0(z)-R_0(z'))(I-VR(z)).
\end{align}
\end{lemma}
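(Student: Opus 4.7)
The plan is to obtain \eqref{lemma_resolvent_identity_1_1} by applying the free resolvent $R_0(z)$ to the equation $(H-z)u=f$ interpreted in $\H^{-1}$, and then derive \eqref{lemma_resolvent_identity_1_2} by a purely algebraic manipulation of \eqref{lemma_resolvent_identity_1_1} written at $z$ and at $z'$.

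The key preliminary step is to show that $(I+R_0(z)V)^{-1}$ exists as a bounded operator on $L^{q_s,2}$ for any $1/2<s<3/2$ and any $z\in\C\setminus\sigma(H)$. Since $\mathcal E\subset\sigma(H)$ by Proposition \ref{proposition_exceptional_set}(1), such $z$ satisfies $\mathcal N_s(z)=\{0\}$; combined with the compactness of $R_0(z)V$ on $L^{q_s,2}$ from Proposition \ref{compactness}, the Fredholm alternative yields the desired invertibility, exactly as in Lemma \ref{lemma_weighted_2}.

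For the first identity, I would fix $f\in L^2$ and set $u=R(z)f\in D(H)\subset\H^1$. Since $H$ is the Friedrichs extension of the form $\<(-\Delta+V)u,v\>$, the identity $(H-z)u=f$ rewrites as
$$
(-\Delta-z)u=f-Vu\quad\text{in } \H^{-1},
$$
where H\"older and Sobolev inequalities for Lorentz norms guarantee $Vu\in L^{\frac{2n}{n+2},2}\hookrightarrow \H^{-1}$. Applying the bounded map $R_0(z)\colon \H^{-1}\to\H^1$ (valid for $z\notin[0,\infty)$) one obtains $(I+R_0(z)V)u=R_0(z)f$; because $u\in\H^1\hookrightarrow L^{q_1,2}$ with $q_1=2n/(n-2)$, the invertibility established in the previous step yields $u=(I+R_0(z)V)^{-1}R_0(z)f$. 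The companion form $R(z)=R_0(z)-R_0(z)VR(z)$ then follows by expanding $(I+R_0(z)V)R(z)=R_0(z)$.

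For the difference identity, I would subtract the two instances of $R(\cdot)=R_0(\cdot)-R_0(\cdot)VR(\cdot)$ at $z$ and at $z'$ and add and subtract $R_0(z')VR(z)$ to arrive at
$$
(I+R_0(z')V)\bigl(R(z)-R(z')\bigr)=(R_0(z)-R_0(z'))(I-VR(z)),
$$
after which multiplication on the left by $(I+R_0(z')V)^{-1}$ gives \eqref{lemma_resolvent_identity_1_2}. The main obstacle I expect is purely the bookkeeping of function spaces: neither $-\Delta u$ nor $Vu$ individually lies in $L^2$ for a generic $u\in D(H)$, so the intermediate equation must be read in $\H^{-1}$, and the inverse of $I+R_0(z)V$ must be sought in the Lorentz space $L^{q_1,2}$ natural to $\H^1$ rather than on $L^2$ itself.
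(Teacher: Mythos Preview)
Your proposal is correct and follows essentially the same approach as the paper. The only cosmetic differences are that the paper inverts $I+R_0(z)V$ on $\H^1$ rather than on $L^{q_1,2}$ (both are justified by Proposition~\ref{compactness} and the Fredholm alternative), and it argues in the reverse direction---defining $g=(I+R_0(z)V)^{-1}R_0(z)f$ and verifying $(H-z)g=f$ via the form---whereas you start from $u=R(z)f$ and derive the same intertwining relation; the algebra for \eqref{lemma_resolvent_identity_1_2} is identical.
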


\begin{proof}
It follows from Proposition \ref{proposition_exceptional_set} (1) and the fact $\H^1\subset L^{\frac{2n}{n-2},2}$ that $\Ker_{\H^1}(I+R_0(z)V)$ is trivial. 
Since $R_0(z)V\in \mathbb B_\infty(\H^1)$ by Proposition \ref{compactness}, $I+R_0(z)V$ is invertible on $\H^1$ by the Fredholm alternative theorem. $(I+R_0(z)V)^{-1}R_0(z)$ thus is a bounded operator from $L^2$ to $\H^1$. Let $f\in L^2$ and set $g=(I+R_0(z)V)^{-1}R_0(z)f\in \H^1$. Since $$(I+R_0(z)V)(I+R_0(z)V)^{-1}R_0(z)=R_0(z)$$
as a bounded operator from $L^2$ to $\H^1$, we see that
\begin{align}
\label{proof_lemma_resolvent_identity_1_1}
g=R_0(z)f-R_0(z)Vg.
\end{align} 
Then, for any $\varphi\in \H^1$, 
$
\<(-\Delta-z)g,\varphi\>=\<f,\varphi\>-\<Vg,\varphi\>=\<f,\varphi\>-\<V_1g,V_2\varphi\>,
$
where $V_1,V_2\in L^{n/2,\infty}_0(\R^n;\R)$ satisfies $V=V_1V_2$. Therefore, we obtain
\begin{align*}
\<(H-z)g,\varphi\>=\<(-\Delta-z)g,\varphi\>+\<V_1g,V_2\varphi\>=\<f,\varphi\>
\end{align*}
which shows $(H-z)(I+R_0(z)V)^{-1}R_0(z)=I$ on $L^2$. For $f\in D(H)$, we similarly obtain
$$
(I+R_0(z)V)^{-1}R_0(z)(H-z)f=(I+R_0(z)V)^{-1}f+(I+R_0(z)V)^{-1}R_0(z)Vf=f,
$$
which gives us $(I+R_0(z)V)^{-1}R_0(z)(H-z)=I$ on $D(H)$ and the first identity in \eqref{lemma_resolvent_identity_1_1} thus follows. The second identity in \eqref{lemma_resolvent_identity_1_1} follows from the first identity and \eqref{proof_lemma_resolvent_identity_1_1}. 

Now we shall show \eqref{lemma_resolvent_identity_1_2}. It follows from  \eqref{lemma_resolvent_identity_1_1} that
$$
(I+R_0(z')V)(R(z)-R(z'))=(R_0(z)-R_0(z'))(I-VR(z))
$$
on $L^2$. Since $R_0(z)-R_0(z'),R(z)-R(z'):L^2\to \H^1$ are continuous and $I+R_0(z')V$ is invertible on $\H^1$, we have the desired identity \eqref{lemma_resolvent_identity_1_2}. 
\end{proof}

Now we are in position to prove Theorem \ref{theorem_KRS_1}, Corollaries \ref{corollary_KRS_1} and \ref{corollary_KRS_2} and  Theorem \ref{theorem_spectral_measure_1}. 

\begin{proof}[Proof of Theorem \ref{theorem_KRS_1}]
Assume that $(p,q)$ satisfies \eqref{p_q}. It follows from Propositions \ref{proposition_free_Sobolev_1} and \ref{proposition_weighted_0} and Lemma \ref{lemma_resolvent_identity_1} that, for any $\delta>0$ there exists $C_\delta>0$ such that 
\begin{align*}
\norm{R(z)f}_{L^{q,2}}\le C_\delta (1+\norm{(I+R_0(z)V)^{-1}}_{\mathbb B(L^{q,2})})\norm{R_0(z)f}_{L^{q,2}}\le C_\delta |z|^{\frac n2(\frac1p-\frac1q)-1}\norm{f}_{L^{p,2}}
\end{align*}
for all $f\in L^2\cap L^{p,2}$ and $z\in \C\setminus([0,\infty)\cup \mathcal E_\delta)$. Since $L^2\cap L^{p,2}$ is dense in $L^{p,2}$, this implies that $R(z)\in \mathbb B(L^{p,2},L^{q,2})$ and that \eqref{theorem_KRS_1_1} holds uniformly in $z\in \C\setminus([0,\infty)\cup \mathcal E_\delta)$. 
\end{proof}

\begin{proof}[Proof of Corollary \ref{corollary_KRS_1}]
As before, we shall prove the corollary for $R(\lambda+i0)$ only. We also consider the case $1/p-1/q=2/n$ only, proof for other cases being similar. At first, we claim that, for any $\chi_1,\chi_2\in C_0^\infty(\R^n)$, $\chi_1R(z)\chi_2$ defined for $z\in \C^+$ extends to a $\mathbb B(L^2)$-valued continuous function $\chi_1R^+(z)\chi_2$ on $\overline{\C^+}\setminus\mathcal E$. It follows from this claim that, for any $u,v\in C_0^\infty(\R^n)$, $\<R^+(z)u,v\>$ is a continuous function on $\overline{\C^+}\setminus\mathcal E$. Then, by letting $\ep\searrow0$ in the estimate
$$
|\<R(\lambda+i\ep)u,v\>|\lesssim \norm{u}_{L^{p,2}}\norm{v}_{L^{q',2}},
$$
which follows from Theorem \ref{theorem_KRS_1}, and by using the density argument we obtain  that $R(\lambda+i0)$ extends to an element in $\mathbb B(L^{p,2},L^{q,2})$ and satisfies
\begin{align}
\label{proof_corollary_KRS_1_1}
\sup_{\lambda\in [0,\infty)\setminus\mathcal E}\norm{R(\lambda+i0)}_{\mathbb B(L^{p,2},L^{q,2})}<\infty. 
\end{align}
 This shows the first statement (1). For the second statement (2), it follows by plugging $z=\lambda\pm i\ep$ and then letting $\ep\searrow0$ in the equation \eqref{lemma_resolvent_identity_1_1} that, for any $f\in L^{q,2}\cap L^2$ and $\lambda\in [0,\infty)\setminus\mathcal E$, 
\begin{align}
\label{proof_corollary_KRS_1_2}
R(\lambda\pm i0)f=R_0(\lambda\pm i0)\Big(I-VR(\lambda\pm i0)\Big)f
\end{align}
in the sense of distributions, which particularly implies that, under the condition $0\notin \mathcal E$, $R(0+i0)=R(0-i0)$ since $R_0(0\pm i0)=(-\Delta)^{-1}$. Moreover, we also learn by \eqref{proof_corollary_KRS_1_2} that 
\begin{align*}
(-\Delta+V-\lambda)R(\lambda+i0)u
&=(I+VR_0(\lambda+i0))(I-VR(\lambda+i0))u\\
&=u+V[R_0(\lambda+i0)-R(\lambda+i0)-R_0(\lambda+i0)VR(\lambda+i0)]u=u
\end{align*} 
for all $u\in L^2\cap L^{p,2}$ and that, for all $v\in \S$, 
\begin{align*}
R(\lambda+i0)(-\Delta+V-\lambda)v
&=R_0(\lambda+ i0)\Big(I-VR(\lambda+ i0)\Big)(-\Delta+V-\lambda)v\\
&=v-R_0(\lambda+ i0)Vv-R_0(\lambda+ i0)Vv=v
\end{align*}
in the sense of distributions. These two identities and \eqref{proof_corollary_KRS_1_1} imply \eqref{corollary_KRS_1_2}. 

It remains to show the above claim. 
Let $V_1,V_2\in L^{n,\infty}_0(\R^n;\R)$ be such that $V=V_1V_2$ and set $K_1(z)=V_1R_0(z)V_2$. The resolvent identity \eqref{lemma_resolvent_identity_1_1} then yields
$$
V_1R(z)\chi_2=V_1R_0(z)\chi_2-K_1(z)V_1R(z)\chi_2
$$
on $L^2$ for all $z\in \C\setminus\sigma(H)$. 
Since $K_1(z)\in \mathbb B_\infty(L^2)$ by Corollary \ref{corollary_free_Sobolev_2} and $\Ker_{L^2}(I+K_1(z))=\emptyset$ for all $z\in \C\setminus\sigma(H)$ by Proposition \ref{proposition_exceptional_set} and Lemma \ref{lemma_equivalence_1}, we learn by this identity that
\begin{align*}
V_1R(z)\chi_2=(I+K_1(z))^{-1}V_1R_0(z)\chi_2,\quad z\in \C\setminus\sigma(H),
\end{align*}
on $L^2$. It follows from again Corollary \ref{corollary_free_Sobolev_2} that $V_1R_0(z)\chi_2$ and $K_1(z)$ extend to $\mathbb B_\infty(L^2)$-valued continuous functions $V_1R_0^+(z)\chi_2$ and $K_1^+(z)=V_1R_0^+(z)V_2$ on $\overline{\C^+}$. Since $\Ker(I+K_1^+(z))=\emptyset$ for $z\in \overline{\C^+}\setminus \mathcal E$, $(I+K_1(z))^{-1}$ also extends to a $\mathbb B(L^2)$-valued continuous function $(I+K_1^+(z))^{-1}$ on $\overline{\C^+}\setminus\mathcal E$. $V_1R(z)\chi_2$ thus extends to a $\mathbb B(L^2)$-valued continuous function $V_1R^+(z)\chi_2$ on $\overline{\C^+}\setminus\mathcal E$, satisfying
$
V_1R^+(z)\chi_2=(I+K_1^+(z))^{-1}V_1R_0^+(z). 
$
Finally, the claim follows from the formula
$$
\chi_1R(z)\chi_2=\chi_1 R_0(z)\chi_2-\chi_1R_0(z)V_2V_1R(z)\chi_2
$$
and the continuity of $\chi_1 R_0^+(z)\chi_2$, $\chi_1R_0^+(z)V_2$ and $V_1 R_0^+(z)\chi_2$ on $\overline{\C^+}\setminus \mathcal E$. 
\end{proof}

\begin{proof}[Proof of Corollary \ref{corollary_KRS_2}]
Let us fix $z\in \C\setminus \sigma(H)$ and take $\delta>0$ so small that $z\notin \mathcal E_\delta$. Recall that $R_0(z)\in \mathbb B(L^{p})$ for all $1\le p\le \infty$ and thus $R_0(z)\in \mathbb B(L^{p,2})$ for all $1<p<\infty$ by Theorem \ref{theorem_interpolation_2}. 

The proof of the first assertion is divided into two cases: $\frac{2n}{n+3}<p=q<\frac{2n}{n+1}$ and otherwise. Firstly, when $\frac{2n}{n+3}<p=q<\frac{2n}{n+1}$,  one can find $\frac{2n}{n-1}<q_0<\frac{2n}{n-3}$ such that $\frac1p-\frac{1}{q_0}=\frac2n$. Applying Theorem \ref{theorem_KRS_1} to the resolvent equation \eqref{lemma_resolvent_identity_1_1} implies that, for all $f\in L^2\cap L^{p,2}$, 
\begin{align*}
\norm{R(z)f}_{L^{p,2}}\lesssim \norm{R_0(z)f}_{L^{p,2}}+\norm{R_0(z)}_{\mathbb B(L^{p,2})}\norm{V}_{L^{\frac n2,\infty}}\norm{R(z)f}_{L^{q_0,2}}\le C_\delta\norm{f}_{L^{p,2}}.
\end{align*}
 Combined with a density argument, this implies $R(z)\in \mathbb B(L^{p,2})$ for each $z\in \C\setminus \sigma(H)$. 

Next, by taking the adjoint and using the fact $R(z)^*=R(\overline z)$, we see that $R(z)\in \mathbb B(L^{p,2})$ for all $\frac{2n}{n-1}<p<\frac{2n}{n-3}$. Interpolating these two case yields that $R(z)\in \mathbb B(L^{p,2})$ for all $\frac{2n}{n+3}<p<\frac{2n}{n-3}$. Then the other cases in the first assertion follows by interpolating between the estimates on the two lines $\frac1p-\frac1q=0$ and $\frac1p-\frac1q=\frac2n$ under the conditions $\frac{2n}{n+3}<p$ and $q<\frac{2n}{n-3}$. 

Finally, assuming $1/2<s<3/2$ without loss of generality, the second assertion follows from
$$
\norm{wR(M)f}_{L^2}\lesssim \norm{w}_{L^{\frac ns,\infty}}\norm{R(M)f}_{L^{\frac{2n}{n-2s},2}}\lesssim \norm{w}_{L^{\frac ns,\infty}}\norm{f}_{L^2}
$$
for $M<\inf\sigma(H)-1$, which is a particular case of the first assertion. 
\end{proof}

\begin{proof}[Proof of Theorem \ref{theorem_spectral_measure_1}]
When $\frac{2n}{n+2}\le p\le \frac{2(n+1)}{n+3}$, \eqref{theorem_spectral_measure_1_1} follows from \eqref{corollary_KRS_1_1} and Stone's formula \eqref{Stone}. 
When $\frac{2n}{n+3}<p<\frac{2n}{n+2}$, there are two main ingredients. 

At first, it is known that $E'_{-\Delta}(\lambda)\in \mathbb B(L^p,L^{p'})$ for all $1\le p\le \frac{2(n+1)}{n+3}$ and satisfies
\begin{align}
\label{proof_theorem_spectral_measure_1_1}
\norm{E_{-\Delta}'(\lambda)}_{\mathbb B(L^p,L^{p'})}\lesssim\lambda^{\frac n2(\frac1p-\frac{1}{p'})-1},\quad \lambda>0.
\end{align}
Indeed, $E'_{-\Delta}(\lambda)$ can be brought to the form $E'_{-\Delta}(\lambda)=(2\pi)^{-n}\lambda^{(n-1)/2}R_{\sqrt{\lambda}}^*R_{\sqrt{\lambda}}$, where $$R_\mu u(\omega):=\int_{\R^n} e^{-2\pi i\mu\omega\cdot x}u(x)dx,\quad \mu>0,\ \omega\in \mathbb S^{n-1}.$$
Then the Stein-Tomas restriction theorem  (see \cite{Tom,Ste2}) and the $TT^*$-argument show that $R_1^*R_1$ is bounded from $L^p$ to $L^{p'}$ for all $1\le p\le \frac{2(n+1)}{n+3}$, which particularly implies \eqref{proof_theorem_spectral_measure_1_1} by scaling. 

Secondly, we claim that the following identity holds for all $f,g\in \S$ and $\lambda\in (0,\infty)$: 
\begin{align}
\label{proof_theorem_spectral_measure_1_2}
\<E'_H(\lambda)f,g\>=\<(I+R_0(\lambda-i0)V)^{-1}E'_{-\Delta}(\lambda)(I-VR(\lambda+i0))f,g\>.
\end{align}
Since $VR(\lambda+i0)\in \mathbb B(L^{p})$ and $(I+R_0(\lambda-i0)V)^{-1}\in \mathbb B(L^{p'})$ for $\frac{2n}{n+3}<p<\frac{2n}{n+1}$ by Corollary \ref{corollary_KRS_1} and Proposition \ref{proposition_weighted_0}, the desired assertion \eqref{theorem_spectral_measure_1_1} follows from \eqref{proof_theorem_spectral_measure_1_1}, \eqref{proof_theorem_spectral_measure_1_2} and a density argument. 

It remains to show the identity \eqref{proof_theorem_spectral_measure_1_2}. Let $f,g\in \S$ and set
$$
F(z)=\frac{1}{\pi}(I+R_0(\overline z)V)^{-1}\Im R_0(z)(I-VR(z)),\quad z\in \C^+, 
$$
which is a bounded operator from $L^2$ to $\H^1$ (see the proof of Lemma \ref{lemma_resolvent_identity_1}) where $\Im R_0(z)=(2i)^{-1}(R_0(z)-R_0(\overline z))$.  By \eqref{lemma_resolvent_identity_1_2} with $z=\lambda+i\ep$, $z'=\overline z$, one has 
$\pi^{-1}\Im R(z)=F(z)$. Moreover, $$\<E'_H(\lambda)f,g\>=\pi^{-1}\lim\limits_{\ep\searrow0}\<\Im R(\lambda+i\ep)f,g\>$$ exists by Corollary \ref{corollary_KRS_1}. For the operator $F(z)$,  we write
$$
F(z)f=\frac{1}{\pi}(I+R_0(\overline z)V)^{-1}(\Im R_0(z)\<x\>^{-3}-\Im R_0(z)VR(z)\<x\>^{-3})\<x\>^3f. 
$$
By Proposition \ref{compactness}, all of $(I+R_0(\overline z)V)^{-1},\Im R_0(z)\<x\>^{-3},\Im R_0(z)V$ and $R(z)\<x\>^{-3}$ extend to $\mathbb B(L^{p'})$-valued continuous function on $\overline {\C^+}\setminus\mathcal E$. Therefore, $\<F(\lambda+i0)f,g\>=\lim\limits_{\ep\searrow0}\<F(\lambda+i\ep)f,g\>$ exists and coincides with the right hand side of \eqref{proof_theorem_spectral_measure_1_2}. Therefore \eqref{proof_theorem_spectral_measure_1_2} follows. 
\end{proof}


The remaining part of the section is devoted to the following theorem, which plays a crucial role in the proof of Strichartz estimates.

\begin{theorem}
\label{theorem_KRS_2}
Suppose that $\mathcal E\cap [0,\infty)=\emptyset$. Let $(p,q)$ be such that $1/p-1/q=2/n$ and ${2n}/{(n+3)}<p<{2n}/{(n+1)}$. Then
\begin{align}
\label{theorem_KRS_2_2}
\sup_{z\in \C\setminus[0,\infty)}\norm{P_{\mathrm{ac}}(H) R(z)}_{\mathbb B(L^{p,2},L^{q,2})}<\infty. 
\end{align}
\end{theorem}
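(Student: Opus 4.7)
The plan is to reduce the uniform bound on $P_{\mathrm{ac}}(H)R(z)$ to the estimate on $R(z)$ supplied by Theorem \ref{theorem_KRS_1}, handling the singularities of $R(z)$ at the eigenvalues of $H$ by an explicit subtraction and a maximum-principle argument near each eigenvalue. First I would exploit the hypothesis $\mathcal{E}\cap[0,\infty)=\emptyset$ to reduce $\mathcal{E}$ to a finite set. By Remark \ref{remark_weighted_4}, $\mathcal{E}$ is bounded in $\R$, and the operator-norm continuity in Proposition \ref{proposition_weighted_0} shows that $\mathcal{E}$ is closed (its complement in $\overline{\C^+}$ being open by a standard Neumann argument). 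Combined with $\mathcal{E}\cap(-\infty,0)=\sigma_{\mathrm{d}}(H)$ from Proposition \ref{proposition_exceptional_set}(1), this yields $\mathcal{E}=\{\lambda_1,\ldots,\lambda_N\}\subset(-\infty,0)$, a finite set of isolated eigenvalues of finite multiplicity. Let $P_j$ denote the orthogonal projection onto $\Ker(H-\lambda_j)$. By Proposition \ref{proposition_exceptional_set}(2), each eigenfunction in $\Ran(P_j)$ lies in $L^{r,2}$ for every $r\in(2n/(n-1),2n/(n-3))$, and the assumption $\tfrac1p-\tfrac1q=\tfrac2n$ with $2n/(n+3)<p<2n/(n+1)$ places both $q$ and $p'$ inside this interval. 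H\"older's inequality for Lorentz spaces then gives $P_j\in\mathbb{B}(L^{p,2},L^{q,2})$.

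Next I would invoke $\sigma_{\mathrm{sc}}(H)=\emptyset$, a standard consequence of Corollary \ref{corollary_KRS_1} via Stone's formula and density, to conclude $P_{\mathrm{ac}}(H)=I-\sum_{j=1}^N P_j$. Combined with $R(z)P_j=P_j/(\lambda_j-z)$, this produces the identity
\begin{equation*}
P_{\mathrm{ac}}(H)R(z) \,=\, R(z)-\sum_{j=1}^N\frac{P_j}{\lambda_j-z},\qquad z\in\C\setminus([0,\infty)\cup\mathcal{E}).
\end{equation*}
Setting $\delta_0:=\tfrac{1}{3}\min(\{|\lambda_j|\}\cup\{|\lambda_j-\lambda_k|:j\neq k\})$, Theorem \ref{theorem_KRS_1} applied with $\tfrac1p-\tfrac1q=\tfrac2n$ (so the power of $|z|$ vanishes) supplies $\norm{R(z)}_{\mathbb{B}(L^{p,2},L^{q,2})}\le C_{\delta_0}$ for every $z\in\C\setminus([0,\infty)\cup\mathcal{E}_{\delta_0})$, while the finite sum is dominated by $\delta_0^{-1}\sum_j\norm{P_j}_{\mathbb{B}(L^{p,2},L^{q,2})}$. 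This establishes \eqref{theorem_KRS_2_2} on that set.

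Finally, for $z$ in the disks $D_j:=\{|z-\lambda_j|<\delta_0\}$ I would apply the maximum principle. Since $\lambda_j$ is an isolated eigenvalue of finite multiplicity of the self-adjoint operator $H$, the principal part of the Laurent expansion of $R(z)$ around $\lambda_j$ is exactly $P_j/(\lambda_j-z)$, so the right-hand side of the displayed identity extends holomorphically across $\lambda_j$. Fixing $f\in L^{p,2}\cap L^2$ and $g\in L^{q',2}\cap L^2$, the scalar function $z\mapsto\<P_{\mathrm{ac}}(H)R(z)f,g\>$ is then holomorphic on $D_j$ and continuous up to $\partial D_j$, where the previous paragraph controls it by $C\norm{f}_{L^{p,2}}\norm{g}_{L^{q',2}}$. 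The maximum modulus principle transports this bound into $D_j$, and taking the supremum over normalized $f,g$, together with density of $L^{p,2}\cap L^2$ in $L^{p,2}$ and the Lorentz duality $(L^{p,2})^{\ast}=L^{p',2}$, yields the desired operator bound throughout $D_j$. The most delicate step will be to make the cancellation between the principal part of $R(z)$ and the subtracted term rigorous in the $\mathbb{B}(L^{p,2},L^{q,2})$ topology (rather than merely in $\mathbb{B}(L^2)$); this is handled by combining the boundedness of $P_j$ from the first paragraph with the operator-norm continuity of $R(z)$ on $\C\setminus\sigma(H)$ already built into Theorem \ref{theorem_KRS_1}.
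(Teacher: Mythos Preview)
Your argument is correct. The treatment away from the eigenvalues coincides with the paper's: write $P_{\mathrm{ac}}(H)R(z)=R(z)-\sum_j(\lambda_j-z)^{-1}P_j$, apply Theorem~\ref{theorem_KRS_1} (with exponent zero since $1/p-1/q=2/n$) to $R(z)$, and use the $L^{p,2}\to L^{q,2}$ boundedness of each $P_j$. The paper packages the eigenfunction bound as Lemma~\ref{lemma_KRS_4} (which gives the slightly larger range $\tfrac{2n}{n+3}<q<\tfrac{2n}{n-3}$), but your appeal to Proposition~\ref{proposition_exceptional_set}(2) already yields $\psi_j\in L^{r,2}$ for $\tfrac{2n}{n-1}<r<\tfrac{2n}{n-3}$, which suffices here.

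Near the eigenvalues the two proofs diverge. The paper avoids complex analysis and instead iterates the first resolvent equation twice,
\[
P_{\mathrm{ac}}(H)R(z)=P_{\mathrm{ac}}(H)R(M)+(z+M)P_{\mathrm{ac}}(H)R(M)^2+(z+M)^2R(M)P_{\mathrm{ac}}(H)R(z)R(M),
\]
with $M<\inf\sigma(H)-1$, and then uses Corollary~\ref{corollary_KRS_2} to obtain $R(M)\in\mathbb{B}(L^{p,2},L^2)\cap\mathbb{B}(L^2,L^{q,2})$ together with the trivial bound $\norm{P_{\mathrm{ac}}(H)R(z)}_{\mathbb{B}(L^2)}\le\dist(\mathcal{E}_\delta,[0,\infty))^{-1}$. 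Your route via the maximum modulus principle for the scalar functions $z\mapsto\langle P_{\mathrm{ac}}(H)R(z)f,g\rangle$ is more elementary in that it does not require Corollary~\ref{corollary_KRS_2} at all: the $\mathbb{B}(L^2)$-holomorphy of $P_{\mathrm{ac}}(H)R(z)$ on $\C\setminus[0,\infty)$ (immediate from spectral calculus) and the boundary bound already in hand are enough. One small remark: your closing sentence about making the cancellation rigorous ``in the $\mathbb{B}(L^{p,2},L^{q,2})$ topology'' overstates the difficulty---the cancellation only needs to hold in $\mathbb{B}(L^2)$ to get scalar holomorphy, and the Lorentz bound then comes for free from the maximum principle and density, exactly as you wrote two sentences earlier.
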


We first prove some $L^p$-boundedness of the projection $P_{\mathrm{ac}}(H)$. At first note that, under the condition $0\notin \mathcal E$, $H$ may have at most finitely many negative eigenvalues of finite multiplicities. Indeed, since $\sigma_{\mathrm{p}}(H)\cap(-\infty,0)=\sigma_{\mathrm{d}}(H)$, each negative eigenvalue has finite multiplicity and their only possible accumulation point is $z=0$. Moreover, Lemma \ref{lemma_weighted_4} and the Fredholm alternative show that, for sufficiently small $\delta>0$, $(-\delta,\delta)\cap \mathcal E=\emptyset$ as long as $0\notin \mathcal E$. Therefore, $H$ may have at most finitely many negative eigenvalues. In this case $P_{\mathrm{ac}}(H) $ is written in the form
\begin{align}
\label{AC}
P_{\mathrm{ac}}(H) =I-\sum_{j=1}^NP_j,\quad P_j:=\<\cdot,\psi_{j}\>\psi_{j}
\end{align}
where $\psi_j$ are eigenfunctions of $H$ and $N<\infty$. 

\begin{lemma}
\label{lemma_KRS_4}
$\psi_{j}\in L^{q,2}$ and $P_{\mathrm{ac}}(H)\in\mathbb B(L^{q,2})$ for all $\frac{2n}{n+3}< q< \frac{2n}{n-3}$. 
\end{lemma}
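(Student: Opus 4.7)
The plan is to first establish $\psi_j \in L^{r,2}$ for every $r \in (2n/(n+3), 2n/(n-3))$ by a two-sided bootstrap starting from $\psi_j \in L^2$, and then obtain the $L^{r,2}$-boundedness of $P_{\mathrm{ac}}(H)$ directly from the decomposition \eqref{AC}.

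For the upper half of the range I would fix $z_0 < \inf\sigma(H) - 1$ and exploit the identity $\psi_j = (E_j - z_0) R(z_0) \psi_j$, which is immediate from $H\psi_j = E_j \psi_j$. Since Corollary \ref{corollary_KRS_2} yields $R(z_0) \in \mathbb B(L^{p,2}, L^{q,2})$ whenever $(p,q)$ satisfies \eqref{corollary_KRS_2_1}, choosing $p=2$ and letting $q=r$ run through the admissible values gives $\psi_j \in L^{r,2}$ for every $r \in [2, 2n/(n-3))$.

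For the lower half I would use the alternative representation $\psi_j = -R_0(E_j) V \psi_j$, which is legitimate because $E_j < 0$ and $\sigma(-\Delta) = [0,\infty)$, so $R_0(E_j) = (-\Delta - E_j)^{-1}$ is a bounded Fourier multiplier. Two mapping facts drive the argument: first, the symbol $(|\xi|^2 + |E_j|)^{-1}$ satisfies Mikhlin's conditions, giving $R_0(E_j) \in \mathbb B(L^{s,2})$ for every $s \in (1, \infty)$; second, H\"older's inequality for Lorentz norms yields $\|V\psi_j\|_{L^{s,2}} \lesssim \|V\|_{L^{n/2,\infty}} \|\psi_j\|_{L^{r,2}}$ with $1/s = 2/n + 1/r$. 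As $r$ ranges through the open interval $(2n/(n-1), 2n/(n-3))$, already covered by the upper-half step, $s$ runs through $(2n/(n+3), 2n/(n+1)) \subset (1, \infty)$, so $\psi_j = -R_0(E_j) V\psi_j \in L^{s,2}$ for every such $s$.

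The intermediate gap $[2n/(n+1), 2]$ is then filled in by real interpolation (Theorem \ref{theorem_interpolation_2}) between $L^{2,2}$ and $L^{s,2}$ for $s$ arbitrarily close to $2n/(n+3)$, and patching the three segments yields $\psi_j \in L^{r,2}$ for every $r \in (2n/(n+3), 2n/(n-3))$. Since this interval is invariant under $r \leftrightarrow r'$, we have $\psi_j \in L^{r,2} \cap L^{r',2}$ throughout the range, and the H\"older pairing bound shows that each rank-one projection $P_j f = \langle f, \psi_j\rangle \psi_j$ is bounded on $L^{r,2}$; since $P_{\mathrm{ac}}(H) = I - \sum_{j=1}^N P_j$ is a finite sum, the conclusion follows. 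The one place I expect the accounting to be most delicate is the $L^{s,2}$-boundedness of $R_0(E_j)$ for $s$ close to $1$, which is particularly relevant when $n=3$ since $2n/(n+3)=1$ sits at the Mikhlin endpoint; a clean safeguard is to observe that the integral kernel of $R_0(E_j)$ is dominated by $C(|x-y|^{-(n-2)} + 1)e^{-c|x-y|} \in L^1(\R^n)$, whence Young's inequality delivers $L^p$-boundedness for all $p \in [1, \infty]$ and bypasses the endpoint issue.
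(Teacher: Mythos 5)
Your argument is correct, and it is a genuinely different (slightly more economical) route from the paper's own proof, with no circularity: Corollary \ref{corollary_KRS_2} is proved before Lemma \ref{lemma_KRS_4} and its proof does not invoke that lemma. Here is how the two approaches differ. The paper works exclusively with the identity $\psi_j = -R_0(E_j)V\psi_j$ and, for the range $q\in[\tfrac{2n}{n-1},\tfrac{2n}{n-3})$, splits $V=v_1+v_2$ with $v_1\in C_0^\infty$ and $\|v_2\|_{L^{n/2,\infty}}$ small, so that $I+R_0(E_j)v_2$ is invertible on $L^{q,2}$ by a Neumann series and $\psi_j=-(I+R_0(E_j)v_2)^{-1}R_0(E_j)v_1\psi_j$; it then steps down to $p\in[\tfrac{2n}{n+3},\tfrac{2n}{n+1})$ using $R_0(E_j)\in\mathbb B(L^{p,2})$ and H\"older, exactly as in your lower-half step. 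You instead bootstrap the upper half from $\psi_j=(E_j-z_0)R(z_0)\psi_j$ and the already-proved off-diagonal $L^{p,2}$-$L^{q,2}$ bounds for the perturbed resolvent (Corollary \ref{corollary_KRS_2} with $p=2$), avoiding the decomposition $V=v_1+v_2$; you buy brevity at the cost of leaning on the heavier Corollary \ref{corollary_KRS_2}, while the paper's argument for this lemma is self-contained modulo the free estimates. The lower-half step and the final reduction via $P_{\mathrm{ac}}(H)=I-\sum_j\langle\cdot,\psi_j\rangle\psi_j$ are essentially the same in both. Two small cosmetic points: the appeal to Theorem \ref{theorem_interpolation_2} for the intermediate range $[\tfrac{2n}{n+1},2)$ is not literally an application of that operator-interpolation theorem; what you actually need is the elementary containment $L^{p_1,2}\cap L^{p_2,2}\subset L^{p,2}$ for $p_1<p<p_2$ (which follows, e.g., by bounding $f^*(t)$ by $\min(C_1 t^{-1/p_1},C_2 t^{-1/p_2})$), or alternatively one more application of Corollary \ref{corollary_KRS_2} starting from the already-obtained $L^{s,2}$ membership. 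And your Young's-inequality safeguard for $R_0(E_j)$ is indeed the cleanest route to $L^{p,2}$-boundedness on $1<p<\infty$, since the kernel is dominated by $|x-y|^{-(n-2)}e^{-c|x-y|}\in L^1$; this is also the route implicit in the paper's remark that $R_0(z)\in\mathbb B(L^p)$ for $1\le p\le\infty$.
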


\begin{proof}
Let $\psi$ be an eigenfunction of $H$ with an eigenvalue $\lambda<0$. By virtue of \eqref{AC} and real interpolation, it suffices to show $\psi\in L^{q,2}$. For a given $\ep>0$, we decompose $V=v_1+v_2$ with $v_1\in C_0^\infty(\R^n)$ and $\norm{v_2}_{L^{n/2,\infty}}\le\ep$. We first let $\frac{2n}{n-1}< q<\frac{2n}{n-3}$. By Sobolev's inequality and Proposition \ref{proposition_free_Sobolev_1}, one has
\begin{align*}
\norm{R_0(\lambda)v_1\psi}_{L^{q}}&\lesssim \norm{R_0(\lambda)v_1\psi}_{\H^{n(\frac12-\frac1q)}}
\le C_\lambda \norm{v_1\psi}_{L^{2}}\le  C_\lambda \norm{v_1}_{L^{\infty}}\norm{\psi}_{L^2},\\
\norm{R_0(\lambda )v_2}_{\mathbb B(L^{q})}&\lesssim \norm{v_2}_{L^{\frac n2,\infty}}.
\end{align*}
For $\ep>0$ small enough, $I+R_0(\lambda)v_2$ thus is invertible on $L^{q}$ and 
$$
\psi=-R_0(\lambda)V\psi=R_0(\lambda)v_1\psi-R_0(\lambda)v_2\psi=-(I+R_0(\lambda)v_2)^{-1}R_0(\lambda)v_1\psi\in L^{q}. 
$$
Next, since $R_0(\lambda)\in \mathbb B(L^{p})$ for all $1<p<\infty$, we learn by H\"older's inequality that
$$
\norm{\psi}_{L^{p}}=\norm{R_0(\lambda)V\psi}_{L^{p}}\le C_\lambda\norm{V\psi}_{L^{p}}\le C_\lambda \norm{V}_{L^{\frac n2,\infty}}\norm{\psi}_{L^{q}} 
$$ 
if $\frac1p-\frac1q=\frac2n$. This shows $\psi\in L^p$ for all $\frac{2n}{n+3}<p<\frac{2n}{n+1}$. Interpolating these two cases, we conclude that $\psi\in L^q$ for all $\frac{2n}{n+3}< q< \frac{2n}{n-3}$. 
\end{proof}

\begin{proof}[Proof of Theorem \ref{theorem_KRS_2}]
Assume that $\mathcal E\cap[0,\infty)=\emptyset$. Then one can find  $\delta>0$ small enough such that $\dist(\mathcal E_\delta,[0,\infty))\ge \delta/2$. The proof is divided into two cases: (i) $z\in \C\setminus([0,\infty)\cup\mathcal E_{\delta})$ or (ii) $z\in \mathcal E_{\delta}$. For the case when $z\in \C\setminus([0,\infty)\cup\mathcal E_{\delta})$, 
since $\frac{2n}{n-1}<q,p'<\frac{2n}{n-3}$ and 
$
P_jR(z)=(\lambda_j-z)^{-1}\<\cdot,\psi_{j}\>\psi_{j}
$, 
Lemma \ref{lemma_KRS_4} implies
$$
\norm{P_jR(z)f}_{L^{p',2}}\le \delta^{-1}\norm{\psi_j}_{L^{q,2}}\norm{\psi_j}_{L^{p',2}}\norm{f}_{L^{p,2}}
$$
which, together with Theorem \ref{theorem_KRS_1} and the formula \eqref{AC}, gives us the desired bound
\begin{align}
\label{proof_theorem_KRS_2_1}
\sup_{z\in \C\setminus([0,\infty)\cup\mathcal E_{\delta})}\norm{P_{\mathrm{ac}} (H)R(z)}_{\mathbb B(L^{p,2},L^{q,2})}\lesssim\delta^{-1}.
\end{align}
When $z\in \mathcal E_{\delta}$, we use twice  the first resolvent equation
$
R(z)=R(z')-(z-z')R(z')R(z)
$
to write
$$
P_{\mathrm{ac}}(H) R(z)=P_{\mathrm{ac}}(H) R(M)+(z+M)P_{\mathrm{ac}}(H) R(M)^2+(z+M)^2R(M)P_{\mathrm{ac}}(H) R(z)R(M),
$$
where we have taken $M<\inf\sigma(H)-1$. Note that $|z+M|\le 2|M|+\delta$ for $z\in \mathcal E_\delta$ since $\mathcal E$ is a bounded set in $\R$. Moreover, we learn by Lemma \ref{lemma_KRS_4} and Corollary \ref{corollary_KRS_2} and Theorem \ref{theorem_interpolation_2} that
\begin{align*}
&\norm{P_{\mathrm{ac}}(H) R(M)}_{\mathbb B(L^{p,2},L^{q,2})}\le \norm{P_{\mathrm{ac}}(H)}_{\mathbb B(L^{q,2})}\norm{R(M)}_{\mathbb B(L^{p,2},L^{q,2})}\le C_M,\\
&\norm{R(M)}_{\mathbb B(L^2,L^{q,2})}+\norm{R(M)}_{\mathbb B(L^{p,2},L^2)}\le C_M
\end{align*}
with some $C_M$ being independent of $z$. It follows from these two bounds and the trivial $L^2$-bound
$$
\sup_{z\in \mathcal E_\delta}\norm{P_{\mathrm{ac}}(H) R(z)}_{\mathbb B(L^2)}\le \dist(\mathcal E_\delta,[0,\infty))^{-1}\le 2\delta^{-1}
$$
that there exists $C_{M,\delta}>0$, independent of $z$, such that
\begin{align}
\label{proof_theorem_KRS_2_2}
\sup_{z\in \mathcal E_\delta}\norm{P_{\mathrm{ac}}(H) R(z)}_{\mathbb B(L^{p,2},L^{q,2})}\le C_{M,\delta}.
\end{align}
The assertion of the theorem then follows from \eqref{proof_theorem_KRS_2_1} and \eqref{proof_theorem_KRS_2_2}. 
\end{proof}

\section{Kato smoothing and Strichartz estimates}
\label{section_Strichartz}
This section is devoted to the proof of Theorems \ref{theorem_smoothing_1} and \ref{theorem_Strichartz_1}. 
We first prepare several lemmas. Let $e^{it\Delta}$ be the free Schr\"odinger unitary group and define 
$$
\Gamma_0F(t):=\int_0^t e^{i(t-s)\Delta}F(s)ds,\quad F\in L^1_{\loc}(\R;L^2(\R^n)).
$$
The estimates for the free Schr\"odinger equation used in this section are summarized as follows: 

\begin{lemma}
\label{lemma_Strichartz_1}
Let $(p,q)$ satisfy \eqref{admissible}, $(p_s,q_s)$ be as in \eqref{p_q_s} and $\rho>1/2$. Then
\begin{align}
\label{lemma_Strichartz_1_1}
\norm{e^{it\Delta}\psi}_{L^p_tL^{q,2}_x}&\lesssim \norm{\psi}_{L^2_x},\\
\label{lemma_Strichartz_1_2}
\norm{\Gamma_0F}_{L^2_tL^{q_s,2}_x}&\lesssim \norm{F}_{L^2_tL^{p_s,2}_x}\quad \text{for}\ \frac{n}{2(n-1)}<s<\frac{3n-4}{2(n-1)},\\
\label{lemma_Strichartz_1_3}
\norm{\Gamma_0F}_{L^2_tL^{q_s}_x}&\lesssim \norm{F}_{L^2_tL^{p_s}_x}\quad  \text{for}\ s=\frac{n}{2(n-1)},\ \frac{3n-4}{2(n-1)},\\
\label{lemma_Strichartz_1_4}
\norm{\<x\>^{-\rho}|D|^{1/2}e^{it\Delta}\psi}_{L^2_tL^2_x}&\lesssim \norm{\psi}_{L^2_x},\\
\label{lemma_Strichartz_1_5}
\norm{\<x\>^{-\rho}|D|^{1/2}\Gamma_0F}_{L^2_tL^2_x}&\lesssim \norm{F}_{L^2_tL^{\frac{2n}{n+2},2}_x}.
\end{align}
\end{lemma}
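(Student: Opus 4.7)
The plan is to handle the five estimates in three groups according to the technique used.

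First, for the admissible Strichartz estimate \eqref{lemma_Strichartz_1_1}, I would invoke the Keel--Tao endpoint theorem together with its standard Lorentz-space refinement: the doubly endpoint bound $\norm{e^{it\Delta}\psi}_{L^2_tL^{2n/(n-2),2}_x}\lesssim \norm{\psi}_{L^2}$ is actually what comes out of the Keel--Tao dyadic interpolation argument, and real interpolation against the conservation law $\norm{e^{it\Delta}\psi}_{L^\infty_tL^2_x}=\norm{\psi}_{L^2}$ fills in every admissible pair with a Lorentz target.

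Next, for the Kato smoothing estimates \eqref{lemma_Strichartz_1_4}--\eqref{lemma_Strichartz_1_5}, I would appeal to Kato's theory of smooth perturbations: \eqref{lemma_Strichartz_1_4} is equivalent to $A:=\<x\>^{-\rho}|D|^{1/2}$ being $(-\Delta)$-smooth, which in turn is equivalent to the uniform bound $\sup_{z\notin[0,\infty)}\norm{AR_0(z)A^*}_{\mathbb B(L^2)}<\infty$. Since $|D|^{1/2}$ commutes with $R_0(z)$, one can write $|D|^{1/2}R_0(z)|D|^{1/2}=|D|R_0(z)=|D|^{-1}+z|D|^{-1}R_0(z)$. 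The first summand, sandwiched by $\<x\>^{-\rho}$, is bounded on $L^2$ by Hardy's inequality for $\rho>1/2$; the second reduces to the Agmon--Kato--Yajima weighted resolvent bound already recorded in Lemma~\ref{lemma_free_Sobolev_0} (after absorbing $z|D|^{-2}$ into a bounded symbol at high frequency). Estimate \eqref{lemma_Strichartz_1_5} then follows by the standard $TT^*$/Christ--Kiselev duality from \eqref{lemma_Strichartz_1_4} combined with \eqref{lemma_Strichartz_1_1}.

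For the non-admissible inhomogeneous Strichartz estimates \eqref{lemma_Strichartz_1_2}--\eqref{lemma_Strichartz_1_3}, the strategy is the Kato--Foschi--Koh--Seo reduction to a uniform resolvent bound. After replacing $\Gamma_0$ by its non-truncated, symmetric counterpart $\int_\R e^{i(t-s)\Delta}F(s)\,ds$ via Christ--Kiselev, taking the Fourier transform in $t$ and using Stone's formula gives
\begin{align*}
\int_\R e^{i(t-s)\Delta}F(s)\,ds=c\int_\R e^{it\tau}\bigl[R_0(\tau+i0)-R_0(\tau-i0)\bigr]\widetilde F(\tau)\,d\tau,
\end{align*}
so Plancherel in $t$ converts the $L^2_tL^{p_s,2}_x\to L^2_tL^{q_s,2}_x$ bound into the pointwise (in $\tau$) resolvent estimate $\norm{R_0(\tau\pm i0)}_{\mathbb B(L^{p_s,2},L^{q_s,2})}\lesssim |\tau|^{s-1}$, which is exactly Proposition~\ref{proposition_free_Sobolev_1} for the pair $(p,q)=(p_s,q_s)$ on the line $1/p-1/q=2/n$. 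The open range $n/(2(n-1))<s<(3n-4)/(2(n-1))$ is precisely what keeps $(p_s,q_s)$ strictly inside the trapezium \eqref{p_q}; at the two boundary values of $s$, the pair lands on the Stein--Tomas line $1/p-1/q=2/(n+1)$, where Proposition~\ref{proposition_free_Sobolev_1} still applies but only in Lebesgue form, giving \eqref{lemma_Strichartz_1_3}.

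The main obstacle I anticipate is the Christ--Kiselev step for \eqref{lemma_Strichartz_1_2}: it requires $p_s<2<q_s$ across the full range (automatic since $s\in(1/2,3/2)$ forces this), but the borderline $s=1$ where $p_s'=q_s$ needs slightly different bookkeeping since the scalar Schur argument degenerates there; this is typically handled by exploiting the symmetry of the untruncated operator so that it coincides (up to constants) with a self-adjoint Fourier multiplier in $\tau$, bypassing Christ--Kiselev entirely at the symmetric point.
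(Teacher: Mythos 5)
The paper's proof of this lemma is a list of citations to the literature (Strichartz, Ginibre--Velo, Keel--Tao, Foschi, Vilela, Koh--Seo, Kenig--Ponce--Vega, Ionescu--Kenig), so your attempt to actually sketch the arguments is more ambitious than what the author does. Your treatment of \eqref{lemma_Strichartz_1_1} via Keel--Tao plus real interpolation is fine, and your Kato-smoothing reduction for \eqref{lemma_Strichartz_1_4} is morally the Kenig--Ponce--Vega proof, though the decomposition $|D|R_0(z)=|D|^{-1}+z|D|^{-1}R_0(z)$ followed by ``absorbing $z|D|^{-2}$ at high frequency'' hides the real work: the uniform bound on $\<x\>^{-\rho}|D|R_0(z)\<x\>^{-\rho}$ lives precisely in the region $|\xi|^2\approx|z|$, where $z|D|^{-2}$ is of order one and the resolvent is singular, and one genuinely needs a trace-lemma (restriction) argument rather than the $s>(n+1)/2$ weighted bound of Lemma~\ref{lemma_free_Sobolev_0}. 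For \eqref{lemma_Strichartz_1_2}--\eqref{lemma_Strichartz_1_3} and \eqref{lemma_Strichartz_1_5}, however, there are genuine gaps.

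First, the Fourier-transform-in-$t$/Plancherel reduction to a pointwise-in-$\tau$ resolvent bound does not apply to \eqref{lemma_Strichartz_1_2}: Plancherel in $L^2(\R;X)$ holds only when $X$ is (isomorphic to) a Hilbert space, and $L^{q_s,2}_x$ is not. This is exactly why the paper's Lemma~\ref{lemma_Strichartz_3}, which does use the Laplace-transform/Parseval argument, is stated for $L^2_tL^2_x$ with the spatial weights $\chi,w$ absorbed into the $L^2_x$ norm, and why the inhomogeneous Strichartz estimates \eqref{lemma_Strichartz_1_2}--\eqref{lemma_Strichartz_1_3} are instead imported from Foschi/Vilela/Koh--Seo, whose proofs go through a dyadic Whitney decomposition and bilinear estimates, not a resolvent bound. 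Second, the Christ--Kiselev step you invoke (to pass from the non-retarded to the retarded operator) is governed by the \emph{time} exponents, not the space exponents: it requires $r_1<r_2$ for an $L^{r_1}_t\to L^{r_2}_t$ estimate, and here $r_1=r_2=2$ for every $s$, so it fails throughout \eqref{lemma_Strichartz_1_2}, not merely at $s=1$; the same difficulty afflicts your proposed derivation of \eqref{lemma_Strichartz_1_5} from \eqref{lemma_Strichartz_1_4} and \eqref{lemma_Strichartz_1_1}. Third, your claim that the range $\frac{n}{2(n-1)}<s<\frac{3n-4}{2(n-1)}$ ``is precisely what keeps $(p_s,q_s)$ strictly inside the trapezium'' is incorrect: the trapezium condition \eqref{p_q} restricted to the line $1/p-1/q=2/n$ is exactly $1/2<s<3/2$, and the interval $\big(\frac{n}{2(n-1)},\frac{3n-4}{2(n-1)}\big)$ is a strictly smaller subinterval (e.g.\ $(3/4,5/4)$ for $n=3$). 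That smaller range is the Foschi--Vilela admissibility condition, which comes from the convergence of the dyadic decomposition, not from the free resolvent estimate; so your heuristic that ``the resolvent estimate dictates the range'' would in fact predict a larger, and false, range of $s$ for the retarded operator. The endpoint pairs in \eqref{lemma_Strichartz_1_3} also do not lie on the Stein--Tomas line $1/p-1/q=2/(n+1)$: every $(p_s,q_s)$ lies on $1/p-1/q=2/n$ by definition; at the two boundary values of $s$ one of $p_s,q_s$ hits the corresponding endpoint of \eqref{p_q}, which is why Koh--Seo's endpoint theorem is what is needed there.
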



\begin{proof}
\eqref{lemma_Strichartz_1_1} for $p>2$ is due to \cite{Str,GiVe}. \eqref{lemma_Strichartz_1_1} with $p=2$ and \eqref{lemma_Strichartz_1_2} with $s=1$ were settle by \cite{KeTa}. \eqref{lemma_Strichartz_1_2} was proved independently by \cite{Fos} and \cite{Vil}. \eqref{lemma_Strichartz_1_3} was settled recently by \cite{KoSe}. Kato-smoothing \eqref{lemma_Strichartz_1_4} was proved by \cite{KPV}. Finally, \eqref{lemma_Strichartz_1_5} can be found in \cite[Lemma 3.2]{Miz3}. 
\end{proof}

The following lemma, which was proved by Kato \cite{Kat} (see also \cite{ReSi,Dan}), shows the equivalence of uniform weighted resolvent estimate and Kato smoothing estimate.

\begin{lemma}
\label{lemma_Kato_smoothing_1}
Let $L$ be a self-adjoint operator on a Hilbert space $\H$, $A$ a densely defined closed operator on $\H$, $a>0$. Then the following two estimates are equivalent to each other: 
\begin{align*}
|\<\Im (L-z)^{-1} A^*u,A^*u\>_\H|&\le a\norm{u}_{\H}^2,\quad u\in D(A^*),\ z\in \C\setminus\R,\\
\norm{Ae^{-itL}v}_{L^2_t\H}&\le 2\sqrt a\norm{v}_{\H},\quad v\in \H. 
\end{align*}
\end{lemma}

The following concerns the equivalence of Sobolev norms generated by $\Delta$ and $H$. 

\begin{lemma}
\label{lemma_equivalence_2}
Assume that $\mathcal E\cap [0,\infty)=\emptyset$ and $0\le s<3/2$. Then
\begin{align}
\norm{(-\Delta+M)^{s/2}(H+M)^{-s/2}}_{\mathbb B(L^2)}+
\norm{(H+M)^{s/2}(-\Delta+M)^{-s/2}}_{\mathbb B(L^2)}<\infty.
\end{align}
\end{lemma}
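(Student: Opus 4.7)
The plan is to treat the ranges $s \in [0,1]$ and $s \in (1,3/2)$ separately, using L\"owner--Heinz operator monotonicity on the lower range and Stein's complex interpolation (with Corollary~\ref{corollary_KRS_2} as an endpoint) on the upper range. First, since $V \in L^{n/2,\infty}_0$ is $-\Delta$-form-compact by H\"older's and Sobolev's inequalities for Lorentz norms (as noted in the introduction following the definition of $H$), for every $\varepsilon > 0$ there is $C_\varepsilon > 0$ with $|\langle Vu,u\rangle| \le \varepsilon\|\nabla u\|_{L^2}^2 + C_\varepsilon\|u\|_{L^2}^2$ on $\mathcal H^1$. Taking $\varepsilon = 1/2$ and $M \ge 2C_{1/2}$ yields the form equivalence
\[
c_1\langle (-\Delta+M)u,u\rangle \le \langle (H+M)u,u\rangle \le c_2\langle (-\Delta+M)u,u\rangle, \quad u \in \mathcal H^1,
\]
with positive constants $c_1,c_2$. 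The L\"owner--Heinz operator monotonicity theorem then promotes this to $c_1^r(-\Delta+M)^r \le (H+M)^r \le c_2^r(-\Delta+M)^r$ as operator inequalities for each $r \in [0,1]$, which is exactly the norm equivalence $\|(-\Delta+M)^{s/2}u\|_{L^2} \asymp \|(H+M)^{s/2}u\|_{L^2}$ for $s = r \in [0,1]$ and thus proves the lemma on this range.

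For $s \in (1,3/2)$, L\"owner--Heinz no longer applies directly. I fix $s_0 \in (s,3/2)$ and invoke Corollary~\ref{corollary_KRS_2} at $z = -M$ (outside $\sigma(H)$ for $M$ large): the pair $(p,q) = (2, 2n/(n-2s_0))$ satisfies \eqref{corollary_KRS_2_1} precisely when $s_0 < 3/2$, giving the mapping $(H+M)^{-1}\colon L^2 \to L^{2n/(n-2s_0),2}$. Combining this with $(H+M)^{-1}\colon L^2 \to \mathcal H^1$ from the first stage and interpolating with the help of the Sobolev embedding in Lorentz spaces, one upgrades this to boundedness of $(-\Delta+M)^{s_0/2}(H+M)^{-1}\colon L^2 \to L^2$. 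With this endpoint in hand I apply Stein's complex interpolation to the analytic family
\[
F(z) := (-\Delta+M)^{zs_0/2}(H+M)^{-z}, \qquad 0 \le \Re z \le 1,
\]
of admissible growth. By the spectral theorem $\|F(i\tau)\|_{\mathbb B(L^2)} = 1$ (a product of unitaries), and by the endpoint estimate together with the unitarity of the imaginary powers $\|F(1+i\tau)\|_{\mathbb B(L^2)}$ is uniformly controlled in $\tau$. Stein's theorem then gives $F(\theta)\colon L^2 \to L^2$ bounded for every $\theta \in [0,1]$.

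To convert this into the desired diagonal bound, I choose $\theta = s/s_0 \in (0,1)$ so that $F(\theta) = (-\Delta+M)^{s/2}(H+M)^{-s/s_0}$, and then close the gap between the exponents $-s/s_0$ and $-s/2$ on the resolvent by applying the first-stage L\"owner--Heinz norm equivalence to the difference, whose absolute value lies in $[0,1]$ throughout the range of interest. The symmetric bound $\|(H+M)^{s/2}(-\Delta+M)^{-s/2}\|_{\mathbb B(L^2)} < \infty$ is proved by an entirely parallel argument in which the endpoint is supplied by the elementary Sobolev mapping $(-\Delta+M)^{-1}\colon L^2 \to \mathcal H^2$ (which holds without any restriction like $s < 3/2$) rather than by Corollary~\ref{corollary_KRS_2}. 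The main obstacle is this last exponent-matching step, because the naive identity $(H+M)^{-s/2} = (H+M)^{-s/s_0}(H+M)^{s/s_0 - s/2}$ composes the bounded interpolant $F(\theta)$ with an unbounded positive power of $H+M$; making the argument rigorous requires a careful density/limiting step leveraging the Stage~1 identification of the Sobolev scales $\mathcal H^t = \mathcal H_H^t$ for $t \in [0,1]$. The restriction $s < 3/2$ in the statement of the lemma is dictated precisely by the admissible range in Corollary~\ref{corollary_KRS_2}.
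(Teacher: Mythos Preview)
Your treatment of $0\le s\le 1$ via L\"owner--Heinz is fine and equivalent to what the paper does. The problem is the range $s\in(1,3/2)$, where your argument has two gaps that cannot be repaired within the scheme you describe.

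First, your claimed endpoint $(-\Delta+M)^{s_0/2}(H+M)^{-1}\colon L^2\to L^2$ does not follow from the ingredients you list. Corollary~\ref{corollary_KRS_2} gives $(H+M)^{-1}f\in L^{2n/(n-2s_0),2}$, and Stage~1 gives $(H+M)^{-1}f\in\mathcal H^1$; but membership in $\mathcal H^1\cap L^{2n/(n-2s_0),2}$ does \emph{not} imply membership in $\mathcal H^{s_0}$ when $s_0>1$: the Sobolev embedding only goes the other way. Indeed, consider $u$ with $\widehat u$ supported near the sphere $\{|\xi|=R\}$; then $\|u\|_{\mathcal H^1}$ and $\|u\|_{L^q}$ scale like $R\|u\|_{L^2}$ and the restriction bound, while $\|u\|_{\mathcal H^{s_0}}\sim R^{s_0}\|u\|_{L^2}$, so no such estimate is possible.

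Second, even granting the endpoint, the exponent-mismatch you flag is fatal: $F(\theta)=(-\Delta+M)^{s/2}(H+M)^{-s/s_0}$ with $s/s_0>s/2$ only tells you that $(-\Delta+M)^{s/2}(H+M)^{-s/2}$ is bounded on the range of $(H+M)^{-(s/s_0-s/2)}$, which is a proper dense subspace of $L^2$ without an accompanying norm control. The Stage~1 identification $\mathcal H^t=\mathcal H_H^t$ for $t\le1$ gives nothing here because the target space is $\mathcal H^s$ with $s>1$.

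The paper's route is genuinely different: instead of interpolating on $L^2$, it runs Stein's interpolation on the strip with $G(z)=\langle (H+M)^{-z}f,(-\Delta+M)^{\bar z}g\rangle$ using endpoint bounds in $L^{r_1}$ and $L^{r_2}$ with $\frac{1}{r_1}-\frac{1}{r_2}=\frac 2n$, chosen so that $\frac12=(1-\frac s2)\frac{1}{r_1}+\frac s2\frac{1}{r_2}$. The upper endpoint $(-\Delta+M)(H+M)^{-1}=I-V(H+M)^{-1}$ is then bounded on $L^{r_2}$ precisely by Corollary~\ref{corollary_KRS_2}, and the $s<3/2$ restriction arises exactly here. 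The price is that at both endpoints one needs the imaginary powers $(H+M)^{i\alpha}$ to act boundedly on $L^{r_1}$ and $L^{r_2}$ (not just $L^2$); this is a nontrivial input supplied by Lemma~\ref{lemma_proof_equivalence_2_1}, which in turn rests on the spectral measure estimate of Theorem~\ref{theorem_spectral_measure_1} and the abstract multiplier machinery of Proposition~\ref{proposition_COSY}. Your $L^2$-only scheme avoids this ingredient, but without it there is no correct choice of analytic family that matches the exponents.
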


\begin{proof}
The proof will be given in the next section. 
\end{proof}

Recall that $\<\cdot,\cdot\>_T$ is the inner product in $L^2_TL^2_x$ defined by
$
\<F,G\>_T=\int_{-T}^T\<F(t),G(t)\>dt
$. It is not hard to check that $\<\Gamma_HF,G\>_T=\<F,\Gamma_H^*G\>_T$ with 
$$
\Gamma_H^*G(t)=\mathds1_{[0,\infty)}(t)\int_t^Te^{-i(t-s)H}G(s)ds-\mathds1_{(-\infty,0]}(t)\int_{-T}^te^{-i(t-s)H}G(s)ds.
$$
The following lemma gives the  rigorous definition of Duhamel's formula (in the sense of forms). 

\begin{lemma}
\label{lemma_Duhamel_1}
Let $1/2<s<3/2$, $V_1\in L^{n/s,\infty}_0(\R^n;\R)$ and $V_2\in L^{n/(2-s),\infty}_0(\R^n;\R)$ be  such that $V=V_1V_2$. Then, for all $\psi \in L^2$ and all simple functions $F,G:\R\to \S$, 
\begin{align}
\label{lemma_Duhamel_1_1}
\<e^{-itH}P_{\mathrm{ac}}(H)\psi,G\>_T
&=\<e^{it\Delta}P_{\mathrm{ac}}(H)\psi,G\>_T-i\<V_1P_{\mathrm{ac}}(H)e^{-itH}\psi,V_2\Gamma_0^*G\>_T,\\
\label{lemma_Duhamel_1_2}
\<\Gamma_HP_{\mathrm{ac}}(H)F,G\>_T
&=\<\Gamma_0P_{\mathrm{ac}}(H)F,G\>_T-i\<V_1\Gamma_HP_{\mathrm{ac}}(H)F,V_2\Gamma_0^*G\>_T,\\
\label{lemma_Duhamel_1_3}
&=\<\Gamma_0F,P_{\mathrm{ac}}(H)G\>_T-i\<V_2\Gamma_0F,V_1\Gamma_H^*P_{\mathrm{ac}}(H)G\>_T.
\end{align}
\end{lemma}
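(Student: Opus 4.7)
The plan is to establish each of the three identities first for Schwartz data, where Duhamel's formula can be differentiated term-by-term in $L^2$, and then to extend by density to $\psi\in L^2$ (respectively $F$ a simple function into $\S$) using the uniform Sobolev estimate of Theorem~\ref{theorem_KRS_2} to feed Kato's smooth perturbation theorem (Lemma~\ref{lemma_Kato_smoothing_1}) with the symmetric factorization $V=|V|^{1/2}\cdot|V|^{1/2}\sgn V$ (which corresponds to the choice $s=1$ of the general factorization $V=V_1V_2$).

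\textbf{Identity (1).} For $\psi\in\S\subset D(H)\cap\H^2$, the function $s\mapsto e^{i(t-s)\Delta}e^{-isH}\psi$ is $C^1$ into $L^2$ with derivative $ie^{i(t-s)\Delta}Ve^{-isH}\psi$, where $Ve^{-isH}\psi\in L^\infty_{\mathrm{loc}}(\R;L^2)$ by Lorentz-H\"older (since $e^{-isH}\psi\in\H^1\hookrightarrow L^{2n/(n-2),2}$ uniformly in $s$ and $V\in L^{n/2,\infty}$). Integrating from $0$ to $t$ gives the classical Duhamel identity
\[
e^{-itH}\psi=e^{it\Delta}\psi-i\int_0^t e^{i(t-s)\Delta}Ve^{-isH}\psi\,ds\quad\text{in }L^2.
\]
Pairing with $G(t)$ (noting $P_{\mathrm{ac}}$ commutes with both $e^{-itH}$ and $e^{it\Delta}$), integrating over $[-T,T]$, and applying Fubini on the time-simplex $\{0\le s\le t\le T\}\cup\{-T\le t\le s\le 0\}$ collapses the inner convolution with $e^{-i(t-s)\Delta}$ precisely into the definition of $\Gamma_0^*G(s)$; using $V=V_1V_2$ with $V_j$ real then yields (1) for $\psi\in\S$. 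To extend to $\psi\in L^2$, take $\psi_k\to\psi$ in $L^2$ with $\psi_k\in\S$. The first two terms on the right are obviously continuous in $\psi$; for the third, Cauchy-Schwarz in $L^2_TL^2_x$ with the factorization $V=|V|^{1/2}\cdot|V|^{1/2}\sgn V$ gives
\[
|\<V_1P_{\mathrm{ac}}e^{-itH}\psi,V_2\Gamma_0^*G\>_T|\le \bignorm{|V|^{1/2}P_{\mathrm{ac}}e^{-itH}\psi}_{L^2_TL^2_x}\cdot\bignorm{|V|^{1/2}\Gamma_0^*G}_{L^2_TL^2_x}.
\]
The first factor is controlled by $\|\psi\|_{L^2}$ through Lemma~\ref{lemma_Kato_smoothing_1} applied with $A=|V|^{1/2}$ and $L=H|_{\mathrm{Ran}\,P_{\mathrm{ac}}}$; the hypothesis $\sup_z\||V|^{1/2}R(z)P_{\mathrm{ac}}|V|^{1/2}\|_{\mathbb B(L^2)}<\infty$ is the composition of H\"older and Theorem~\ref{theorem_KRS_2} at the endpoint pair $(p_1,q_1)=(2n/(n+2),2n/(n-2))$. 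The second factor is finite by the free endpoint Strichartz estimate of Lemma~\ref{lemma_Strichartz_1} applied to $\Gamma_0^*$ combined with H\"older in $|V|^{1/2}\in L^{n,\infty}$, since $G$ is a simple function into $\S$.

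\textbf{Identities (2) and (3).} For (2), I would start from the forward Duhamel identity
\[
\Gamma_H F=\Gamma_0F-i\Gamma_0V\Gamma_HF,
\]
obtained by observing that both sides solve $(i\partial_t+\Delta)u=iF-V\Gamma_HF$ with $u(0)=0$ (interpreted strongly for $F$ a simple function into $\S$ by the same argument as above). Applying this to $P_{\mathrm{ac}}F$, pairing with $G$ and moving $\Gamma_0$ to the right-hand slot yields (2). For (3), I instead start from the alternative Duhamel identity
\[
\Gamma_HF=\Gamma_0F-i\Gamma_HV\Gamma_0F,
\]
which follows from $(i\partial_t-H)(\Gamma_HF-\Gamma_0F)=-V\Gamma_0F$ with zero initial datum, rewrite $\<\Gamma_HP_{\mathrm{ac}}F,G\>_T=\<\Gamma_HF,P_{\mathrm{ac}}G\>_T$ (since $P_{\mathrm{ac}}$ commutes with $\Gamma_H$ and is self-adjoint), and transfer $\Gamma_H^*$ to the $G$-slot---keeping careful track that $(-i\Gamma_HV\Gamma_0)^*$ produces the desired $-i\langle V_2\Gamma_0F,V_1\Gamma_H^*P_{\mathrm{ac}}G\rangle_T$. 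In both cases the density extension mirrors that of (1), where in addition for (3) one needs $\||V|^{1/2}\Gamma_H^*P_{\mathrm{ac}}G\|_{L^2_TL^2_x}$ to be finite, which again follows from Kato's theorem (item (2) of Lemma~\ref{lemma_Kato_smoothing_1}) and Theorem~\ref{theorem_KRS_2}.

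\textbf{Main obstacle.} The formal Duhamel algebra is routine; the genuine difficulty lies in making sense of the bilinear $V$-pairings for $\psi\in L^2$ (respectively $F\in L^2_TL^{p_s,2}_x$). For $n=3$, $V_1P_{\mathrm{ac}}e^{-itH}\psi$ and $V_2\Gamma_0^*G$ individually fail to lie in $L^2_TL^2_x$ via crude Lorentz-H\"older; only after introducing the symmetric $|V|^{1/2}$-factorization and invoking Kato smoothing does the pairing close. This step is what links Lemma~\ref{lemma_Duhamel_1} to the uniform Sobolev estimate proved earlier and is where the standing hypothesis $\mathcal E\cap[0,\infty)=\emptyset$ (inherited from Theorems~\ref{theorem_smoothing_1}--\ref{theorem_Strichartz_1} in whose proofs Lemma~\ref{lemma_Duhamel_1} is used) becomes essential. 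A secondary bookkeeping hurdle is the sign flip of $\int_0^t$ for $t<0$ in all three Fubini interchanges, which must be tracked to recover the precise formulas for $\Gamma_0^*$ and $\Gamma_H^*$ given in the text.
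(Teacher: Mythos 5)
Your strategy diverges from the paper's in a way that leaves a genuine gap at the very first step. You want to differentiate $s\mapsto e^{i(t-s)\Delta}e^{-isH}\psi$ as an $L^2$-valued function and conclude the strong Duhamel identity $e^{-itH}\psi=e^{it\Delta}\psi-i\int_0^t e^{i(t-s)\Delta}Ve^{-isH}\psi\,ds$ in $L^2$. This fails for two related reasons. First, the inclusion $\S\subset D(H)$ you invoke is false in general: for $V\in L^{n/2,\infty}_0$ behaving like $|x|^{-2}$ near the origin and $\psi\in\S$ with $\psi(0)\neq0$, $V\psi\notin L^2$ when $n\le4$, so $\psi\notin D(H)$. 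Second, the Lorentz--H\"older step you cite does not give what you claim: $V\in L^{n/2,\infty}$ and $u\in L^{\frac{2n}{n-2},2}$ yield $Vu\in L^{\frac{2n}{n+2},2}$, not $L^2$, since $\frac2n+\frac{n-2}{2n}=\frac{n+2}{2n}\neq\frac12$. So the putative $L^2$-derivative is not in $L^2$, the strong Duhamel integral is ill-defined, and the same obstruction ruins the operator-level identities $\Gamma_H=\Gamma_0-i\Gamma_0V\Gamma_H$ and $\Gamma_H=\Gamma_0-i\Gamma_HV\Gamma_0$ you use for (2) and (3). (There is also a sign slip in the derivative you wrote, $ie^{i(t-s)\Delta}Ve^{-isH}\psi$ instead of $-ie^{i(t-s)\Delta}Ve^{-isH}\psi$, though your final formula has the correct sign.)

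The paper sidesteps all of this by never leaving the scalar level: it differentiates the scalar function $t\mapsto\<e^{-itH}P_{\mathrm{ac}}(H)u,e^{it\Delta}v\>$ for $u,v\in\S$, which only needs the \emph{form} pairing $\<(H+\Delta)e^{-itH}u,e^{it\Delta}v\>=\<Ve^{-itH}u,e^{it\Delta}v\>$ with $e^{-itH}u,e^{it\Delta}v\in\H^1$, and then splits this pairing \emph{asymmetrically} as $\<V_1e^{-itH}u,V_2e^{it\Delta}v\>$ with $V_1\in L^{n/s,\infty}_0$, $V_2\in L^{n/(2-s),\infty}_0$ (not your symmetric $|V|^{1/2}$ split), so that HLS and Lemma~\ref{lemma_equivalence_2} bound the integrand by $\norm{u}_{\H^s}\norm{v}_{\H^{2-s}}$. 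Because $F,G$ take values in $\S$, that bilinear identity applies pointwise in time, and (2) and (3) follow purely from two Fubini interchanges --- no Kato-smoothing machinery is needed for them. Your use of Lemma~\ref{lemma_Kato_smoothing_1} and Theorem~\ref{theorem_KRS_2} to pass from $\psi\in\S$ to $\psi\in L^2$ in (1) is the right idea and is exactly the place where $\mathcal E\cap[0,\infty)=\emptyset$ enters; but the density step cannot rescue an ill-defined starting identity. The fix is to replace your $L^2$-valued Duhamel step with the paper's scalar Duhamel formula, after which your argument for the extension to $\psi\in L^2$ goes through.
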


\begin{proof}
The proof is basically same as that in \cite[Proposition 4.4]{BoMi} where the case $s=1$ was considered. We shall show \eqref{lemma_Duhamel_1_2}, otherwise the proof being similar. We start from the formula
\begin{align}
\label{proof_Duhamel_1_1}
\<e^{-itH}P_{\mathrm{ac}}(H)u,v\>-\<e^{it\Delta}P_{\mathrm{ac}}(H)u,v\>=-i\int_0^t\<V_1e^{-i\tau H}P_{\mathrm{ac}}(H)u,V_2e^{i(t-\tau)\Delta}v\>d\tau
\end{align}
for $u,v\in \S$, which follows by computing $\frac{d}{dt}\<e^{-itH}P_{\mathrm{ac}}(H)u,e^{it\Delta}v\>$. Here note that the HLS inequality \eqref{HLS} and Lemma \ref{lemma_equivalence_2} yield
\begin{align*}
&|\<V_1e^{-i\tau H}P_{\mathrm{ac}}(H)u,V_2e^{i(t-\tau )\Delta}v\>|\\
&\lesssim \norm{V_1}_{L^{\frac ns,\infty}}\norm{V_2}_{L^{\frac{n}{2-s},\infty}}\norm{(-\Delta+1)^{s/2}u}_{L^2}\norm{(-\Delta+1)^{1-s/2}v}_{L^2}<\infty
\end{align*}
and, hence,  the right hand side of \eqref{proof_Duhamel_1_1} makes sense. 
Changing $t$ by $t-s$, plugging $u=F(s)$, $v=G(t)$ and integrating in $s$ over $[0,t]$, we obtain
\begin{align*}
&\<\Gamma_HP_{\mathrm{ac}}(H)F(t),G(t)\>-\<\Gamma_0P_{\mathrm{ac}}(H)F(t),G(t)\>\\
&=-i\int_0^t\int_s^t\<V_1e^{-i(\tau-s)H}P_{\mathrm{ac}}(H)F(s),V_2e^{i(\tau-t)\Delta}G(t)\>d\tau dt,
\end{align*}
where, by the same argument as above, the integrand of the right hand side is finite and thus integrable in $(\tau,s)\in [0,t]^2$. Therefore, by Fubini's theorem, 
\begin{equation}
\begin{aligned}
\label{proof_Duhamel_1_2}
&\<\Gamma_HP_{\mathrm{ac}}(H)F(t),G(t)\>-\<\Gamma_0P_{\mathrm{ac}}(H)F(t),G(t)\>\\
&=-i\int_0^t\<V_1\Gamma_HP_{\mathrm{ac}}(H)F(\tau),V_2e^{i(\tau-t)\Delta}G(t)\>d\tau.
\end{aligned}
\end{equation}
Finally, observing from the same argument as above that $|\<V_1\Gamma_HF(\tau),V_2e^{i(\tau-t)\Delta}G(t)\>|$ is finite, we integrate \eqref{proof_Duhamel_1_2} in $t$ and use Fubini's theorem to obtain the desired formula \eqref{lemma_Duhamel_1_2}. 
\end{proof}

\begin{remark}
When $s=1$, the identities \eqref{lemma_Duhamel_1_1}, \eqref{lemma_Duhamel_1_2} and \eqref{lemma_Duhamel_1_3} also hold for all $F,G\in L^1_{\loc}L^2$ (see \cite[Proposition 4.4]{BoMi}). 
\end{remark}

Using these lemmas, we first prove Kato smoothing estimates. 

\begin{proof}[Proof of Theorem \ref{theorem_smoothing_1}]
The following argument is basically same as that in \cite{BPST2}. With the above remark at hand, we use \eqref{lemma_Duhamel_1_1} with $G$ replaced by $|D|^{1/2}\<x\>^{-\rho}G$ to obtain
\begin{align*}
&\<\<x\>^{-\rho}|D|^{1/2}e^{-itH}P_{\mathrm{ac}}(H)\psi,G\>_T\\
&= \<\<x\>^{-\rho}|D|^{1/2}e^{it\Delta}P_{\mathrm{ac}}(H)\psi,G\>_T-i\<V_1P_{\mathrm{ac}}(H)e^{-itH}\psi,V_2\Gamma_0^*|D|^{1/2}\<x\>^{-\rho}G\>_T
\end{align*}
for all $\psi \in L^2$ and a simple function $G(t):\R\to \S$ . By \eqref{lemma_Strichartz_1_4}, the first term obeys
\begin{align}
\label{proof_theorem_smoothing_1_1}
|\<\<x\>^{-\rho}|D|^{1/2}e^{it\Delta}P_{\mathrm{ac}}(H)\psi,G\>_T|\lesssim\norm{\psi}_{L^2}\norm{G}_{L^2_tL^2_x}
\end{align}
uniformly in $T>0$. On the other hand, we learn by the dual estimate of \eqref{lemma_Strichartz_1_5} that
\begin{align}
\label{proof_theorem_smoothing_1_2}
|\<V_1P_{\mathrm{ac}}(H)e^{-itH}\psi,V_2\Gamma_0^*G\>_T|\lesssim \norm{V_1P_{\mathrm{ac}}(H)e^{-itH}\psi}_{L^2_tL^2_x}\norm{G}_{L^2_tL^2_x}
\end{align}
uniformly in $T>0$. For the term $\norm{V_1P_{\mathrm{ac}}(H)e^{-itH}\psi}_{L^2_tL^2_x}$, we use Lemma \ref{lemma_Kato_smoothing_1} to deduce
\begin{align}
\label{proof_theorem_smoothing_1_3}
\norm{V_1P_{\mathrm{ac}}(H)e^{-itH}\psi}_{L^2_tL^2_x}\lesssim \norm{\psi}_{L^2_x}
\end{align}
from the following uniform weighted resolvent estimate
$$
\sup_{z\in \C\setminus\R}\norm{V_1P_{\mathrm{ac}}(H)R(z)P_{\mathrm{ac}}(H)V_1}_{\mathbb B(L^2)}<\infty
$$
which is a consequence of Theorem \ref{theorem_KRS_2} and H\"older's inequality \eqref{Holder}, where note that $P_{\mathrm{ac}}(H)^2=P_{\mathrm{ac}}(H)$ since $P_{\mathrm{ac}}(H)$ is an orthogonal projection. Finally, \eqref{proof_theorem_smoothing_1_1}--\eqref{proof_theorem_smoothing_1_3} imply
$$
|\<\<x\>^{-\rho}|D|^{1/2}e^{-itH}P_{\mathrm{ac}}(H)\psi,G\>_T|\lesssim \norm{\psi}_{L^2}\norm{G}_{L^2_tL^2_x}
$$
which, together with duality and density argument, gives us the assertion.
\end{proof}

In order to prove Strichartz estimates, we need one more lemma. 

\begin{lemma}
\label{lemma_Strichartz_3}
Assume $\mathcal E\cap [0,\infty)=\emptyset$. Then, for any $1/2< s< 3/2$ there exists $C>0$ such that, for all $w\in L^{n/(2-s),\infty}$, $\chi\in C_0^\infty(\R^n)$ and $T>0$, 
\begin{align}
\label{lemma_Strichartz_3_1}
\norm{\chi\Gamma_H P_{\mathrm{ac}}(H) wF}_{L^2_TL^2_x}\le C \norm{\chi}_{L^{\frac ns,\infty}}\norm{w}_{L^{\frac{n}{2-s},\infty}}\norm{F}_{L^2_TL^2_x}.
\end{align}
\end{lemma}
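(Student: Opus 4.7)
The strategy is a mixed Kato smoothing argument: first establish a uniform weighted resolvent bound for the pair $(\chi,w)$, and then convert it into the desired $L^2_TL^2_x$ estimate by Fourier analysis in time.

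For the resolvent bound, set $(p_s,q_s)=(\tfrac{2n}{n+4-2s},\tfrac{2n}{n-2s})$ as in \eqref{p_q_s}; the hypothesis $1/2<s<3/2$ places this pair in the admissible range \eqref{p_q} of Theorem \ref{theorem_KRS_2}. A chain of H\"older for Lorentz spaces (with $1/2=s/n+1/q_s$), Theorem \ref{theorem_KRS_2} applied to $(p_s,q_s)$, boundedness of $P_{\mathrm{ac}}(H)$ on $L^{p_s,2}$ (Lemma \ref{lemma_KRS_4}), and H\"older again (with $1/p_s=(2-s)/n+1/2$) yields
\begin{align*}
\norm{\chi P_{\mathrm{ac}}(H)R(z)wf}_{L^2}
&\le \norm{\chi}_{L^{n/s,\infty}}\norm{P_{\mathrm{ac}}(H)R(z)(wf)}_{L^{q_s,2}}\\
&\le C\norm{\chi}_{L^{n/s,\infty}}\norm{wf}_{L^{p_s,2}}\\
&\le C\norm{\chi}_{L^{n/s,\infty}}\norm{w}_{L^{n/(2-s),\infty}}\norm{f}_{L^2}
\end{align*}
uniformly in $z\in\C\setminus[0,\infty)$. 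Write $M:=C\norm{\chi}_{L^{n/s,\infty}}\norm{w}_{L^{n/(2-s),\infty}}$. Under $\mathcal E\cap[0,\infty)=\emptyset$, Corollary \ref{corollary_KRS_1} extends the same uniform bound to the boundary values $R(\lambda\pm i0)$ for every $\lambda\ge0$.

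To convert this resolvent bound into the time-integrated estimate, decompose $F=F_++F_-$ with $F_\pm=\mathds1_{\pm t\ge0}F$ (extended by zero to all of $\R$) and treat the two pieces separately. For $F_+$ supported in $[0,T]$, $\chi\Gamma_HP_{\mathrm{ac}}(H)wF_+$ vanishes on $t<0$ and on $t\ge0$ equals the convolution $(K_+\ast F_+)(t)$, where $K_+(\tau)=\mathds1_{\tau\ge0}\,\chi P_{\mathrm{ac}}(H)e^{-i\tau H}w$. Using $\int_0^\infty e^{-i\tau(H+\lambda)}\,d\tau=-i(H+\lambda-i0^+)^{-1}$, the distributional Fourier transform in $t$ is
$$
\hat K_+(\lambda)=-i\,\chi P_{\mathrm{ac}}(H)R(-\lambda+i0^+)w,
$$
so Plancherel combined with the resolvent bound yields
$$
\norm{\chi\Gamma_HP_{\mathrm{ac}}(H)wF_+}_{L^2_tL^2_x}^2=\frac1{2\pi}\int_\R\norm{\hat K_+(\lambda)\hat F_+(\lambda)}_{L^2_x}^2\,d\lambda\le M^2\norm{F}_{L^2_TL^2_x}^2.
$$
The part $F_-$ is handled by a mirror computation where the advanced kernel $\mathds1_{\tau\le0}e^{-i\tau H}$ has Fourier transform $iR(-\lambda-i0^+)$; the triangle inequality then closes the proof.

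The main technical point is the rigorous identification of the operator-valued Fourier transform $\widehat{\mathds1_{\tau\ge0}e^{-i\tau H}}(\lambda)=-iR(-\lambda+i0^+)$, uniformly in $\lambda\in\R$, in a topology in which the resolvent bound is operative. In practice one runs the Plancherel computation first on a dense subclass of $F$ (for instance, smooth and compactly supported in time with Schwartz values in $x$), where every step is classical, and then extends to general $F\in L^2_TL^2_x$ by density using $M$. Let me note why a polarization route via Lemma \ref{lemma_Kato_smoothing_1}(2) applied to the multiplication operators $\chi+\alpha w^*$ for $\alpha\in\{1,-1,i,-i\}$ is not available: it would force the diagonal estimate $\sup_z\norm{wP_{\mathrm{ac}}(H)R(z)w}<\infty$, which in turn demands $w\in L^{n/s,\infty}\cap L^{n/(2-s),\infty}$—strictly stronger than the hypothesis of the lemma. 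The Fourier route used above avoids the diagonal bound and is sensitive only to the mixed resolvent estimate.
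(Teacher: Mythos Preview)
Your approach is essentially the paper's: establish the uniform mixed resolvent bound $\sup_{z}\norm{\chi P_{\mathrm{ac}}(H)R(z)w}_{\mathbb B(L^2)}\le M$ via Theorem~\ref{theorem_KRS_2} and H\"older, then convert to the time estimate by Fourier analysis in $t$. The paper carries this out via D'Ancona's Laplace-transform argument with a damping parameter $\ep>0$: one computes the Laplace transform of $v=\Gamma_HA_2^*F$ at $\lambda\pm i\ep$ as $-iR(\lambda\pm i\ep)A_2^*\tilde F(\lambda\pm i\ep)$, pairs against a test function $G$ and applies Parseval with the factor $e^{-2\ep|t|}$, bounds by $M$, and lets $\ep\to0$.

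Your direct Plancherel step has a gap at precisely the place where the paper inserts the damping. Even for $F$ smooth, compactly supported in time with Schwartz values, the output $G(t)=\chi\Gamma_HP_{\mathrm{ac}}(H)wF_+(t)$ is only known a priori to lie in $L^\infty_tL^2_x$ (from $\norm{G(t)}_{L^2}\le\int_0^T\norm{wF_+(s)}_{L^2}\,ds$), not in $L^2_tL^2_x$; and the kernel $K_+(\tau)=\mathds1_{\tau\ge0}\chi P_{\mathrm{ac}}(H)e^{-i\tau H}w$ does not decay in $\tau$. Hence neither $\hat G$ nor $\hat K_+$ is a classical Fourier transform, and the identities $\hat K_+(\lambda)=-i\chi P_{\mathrm{ac}}(H)R(-\lambda+i0)w$ and $\widehat{K_+*F_+}=\hat K_+\hat F_+$ together with Plancherel cannot be invoked as ``classical'' on your dense subclass. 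The remedy is exactly the $\ep$-regularization you omit: insert $e^{-\ep t}$, compute the now absolutely convergent transform to obtain $-i\chi P_{\mathrm{ac}}(H)R(-\lambda+i\ep)w$, apply Plancherel at level $\ep$, and pass to the limit by monotone convergence. With that addition your argument coincides with the paper's.
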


\begin{proof}
The proof is essentially based on the argument by D'Ancona \cite[Theorem 2.3]{Dan}. At first note that it suffices to show \eqref{lemma_Strichartz_3_1} with $[-T,T]$ replaced by $\R$. Indeed, since $s\in [-T,T]$ if $t\in [-T,T]$ and $s\in [0,t]$ (or $s\in [t,0]$), \eqref{lemma_Strichartz_3_1} with $[-T,T]$ replaced by $\R$ implies
\begin{align*}
\norm{\chi\Gamma_H P_{\mathrm{ac}}(H) wF}_{L^2_TL^2_x}
\lesssim\norm{\mathds1_{[-T,T]}(s)F}_{L^2_tL^2_x}=\norm{F}_{L^2_TL^2_x}. 
\end{align*}
We may assume, by a density argument, that $F(t):\R\to \S$ is a simple function. Set $A_1=\chi(x)P_{\mathrm{ac}}(H)$ and $A_2=wP_{\mathrm{ac}}(H)$. 
For a function $v(t):\R\to L^2$, $\wtilde v$ denotes its Laplace transform:
$$
\wtilde v(z)=\pm\int_0^{\pm\infty}e^{izt}v(t)dz,\quad \pm\Im z>0. 
$$
A direct calculation yields that if $v(t)=\Gamma_HA_2^*F(t)$ then
$
\wtilde v(z)=-iR(z)A_2^*\wtilde F(z)
$, 
where the identity $\wtilde{A_2^*F}=A_2^*\wtilde F$ follows from the estimate $\norm{A_2F}_{L^1_{\loc}L^2_x}\lesssim \norm{w}_{L^{\frac{n}{2-s},\infty}}\norm{F}_{L^1_{\loc}\H^{2-s}}<\infty$ and Hille's theorem \cite[Theorem 3.7.12]{HiPh}.  Also we see that $v(t)\in D(A_1)$ for each $t$. Indeed, writing $F(t)=\sum_{j=1}^N\mathds1_{E_j}(t)f_j$ with some $f_j\in \S(\R^n)$, we have for each $t$
$$
\norm{A_1v(t)}_{L^2}\le \sum_{j=1}^N\int_0^{|t|} \norm{A_1e^{isH}e^{-itH}P_{\mathrm{ac}}(H)wf_j}_{L^2}ds\lesssim |t|\norm{w}_{L^{\frac {n}{2-s},\infty}}\sum_{j=1}^N\norm{f_j}_{\H^{2-s}}<\infty. 
$$
Then one can use Parseval's theorem to obtain
$$
\pm\int_0^{\pm\infty}e^{-2\ep|t|} \<v(t),A_1^*G(t)\>dt=2\pi\int_\R\<\wtilde v(\lambda\pm i\ep),A_1^*\wtilde{G}(\lambda\pm i\ep)\>d\lambda,\quad \ep>0,
$$
for any simple function $G:\R\to \S$. By virtue of uniform Sobolev estimates \eqref{theorem_KRS_2_2} with $(p,q)=(\frac{2n}{n+2(2-s)},\frac{2n}{n-2s})$ and H\"older's inequality \eqref{Holder}, the integrand of  the right hand  side obeys
$$
|\<\wtilde v(\lambda\pm i\ep),A_1^*\wtilde{G}(\lambda\pm i\ep)\>|\le \norm{\chi}_{L^{\frac n2,\infty}}\norm{w}_{L^{\frac{n}{2-s},\infty}}\norm{\wtilde F(\lambda\pm i\ep)}_{L^2_x}\norm{\wtilde G(\lambda\pm i\ep)}_{L^2_x}. 
$$
Applying again Parseval's theorem, we have
\begin{align*}
&\left|\int_0^{\pm\infty}e^{-2\ep|t|} \<v(t),A_1^*G(t)\>dt\right|=\left|\int_\R\<\wtilde v(\lambda\pm i\ep),A_1^*\wtilde{G}(\lambda\pm i\ep)\>d\lambda
\right|\\
&\lesssim \norm{\chi}_{L^{\frac n2,\infty}}\norm{w}_{L^{\frac{n}{2-s},\infty}}\norm{\wtilde F(\lambda\pm i\ep)}_{L^2_\lambda L^2_x}\norm{\wtilde G(\lambda\pm i\ep)}_{L^2_\lambda L^2_x}\\
&\lesssim\norm{\chi}_{L^{\frac n2,\infty}}\norm{w}_{L^{\frac{n}{2-s},\infty}}\norm{e^{-\ep|t|}F(t)}_{L^2(\R_{\pm};L^2(\R^n))}\norm{e^{-\ep|t|}G(t)}_{L^2(\R_{\pm};L^2(\R^n))},
\end{align*}
which, together with the density of simple functions with values in $\S$, shows
$$
\norm{e^{-\ep|t|}A_1\Gamma_HA_2F}_{L^2_tL^2_x}\lesssim \norm{\chi}_{L^{\frac n2,\infty}}\norm{w}_{L^{\frac{n}{2-s},\infty}}\norm{e^{-\ep|t|}F}_{L^2_tL^2_x},\quad F\in L^2_tL^2_x.
$$
The result then follows by letting $\ep\to0$. 
\end{proof}

\begin{remark}
\label{remark_Strichartz_4}
If $1/2<s\le 1$, \eqref{lemma_Strichartz_3_1} also holds for any $\chi\in L^{n/s,\infty}$. The proof is completely same. When $1<s<3/2$, we do not, a priori, know $\chi e^{-itH}P_{\mathrm{ac}}(H)wF(s)\in L^2_x$ for each $t,s$ under the condition $\chi\in L^{n/s,\infty}$ only, even if $F:\R\to \S$. This is the reason why we have assumed $\chi\in C_0^\infty$. We however stress that Lemma \ref{lemma_Strichartz_3} is sufficient for our purpose. 
\end{remark}

We are now ready to show our Strichartz estimates.

\begin{proof}[Proof of Theorem \ref{theorem_Strichartz_1}]
Using \eqref{lemma_Strichartz_1_1} and \eqref{lemma_Strichartz_1_2} with $s=1$ instead of \eqref{lemma_Strichartz_1_4} and \eqref{lemma_Strichartz_1_5}, respectively, one can see that the proof of the homogeneous endpoint Strichartz estimate of the form
\begin{align}
\label{proof_theorem_Strichartz_1_0}
\norm{e^{-itH}P_{\mathrm{ac}}(H)\psi}_{L^{2}_tL^{\frac{2n}{n-2},2}_x}\lesssim \norm{\psi}_{L^2}
\end{align}
is similar to that of Theorem \ref{theorem_smoothing_1} and even easier than that of \eqref{theorem_Strichartz_1_2}. We thus omit the proof. 

We shall show \eqref{theorem_Strichartz_1_2}. Let $\frac{n}{2(n-1)} <s< \frac{3n-4}{2(n-1)}$, $V_1\in L^{n/s,\infty}_0$ and $V_2\in L^{n/(2-s),\infty}_0$ be real-valued such that $V=V_1V_2$. 
Take a sequence $V_{1,j}\in C_0^\infty$ such that $\norm{V_{1}-V_{1,j}}_{L^{n/s,\infty}}\to0$. Let $F:\R\to \S$ be a simple function in $t$. As in the proof of Lemma \ref{lemma_Duhamel_1}, we see that $\Gamma_HP_{\mathrm{ac}}(H)F\in L^2_TL^{q_s,2}_x$ for each $T>0$ by Lemma \ref{lemma_equivalence_2}. Then, by the duality argument, we have
\begin{align}
\label{proof_theorem_Strichartz_1_1}
\norm{\Gamma_HP_{\mathrm{ac}}(H)F}_{L^2_TL^{q_s,2}_x}\lesssim \sup\{|\<\Gamma_H P_{\mathrm{ac}}(H)F,G\>_T|\ |\ \norm{G}_{L^2_TL^{q_s',2}_x}=1\}
\end{align}
where we may assume by density argument that $G:\R\to \S$ is a simple function. Then, it follows from Duhamel's formula \eqref{lemma_Duhamel_1_2}, \eqref{lemma_Strichartz_1_2}, Lemma \ref{lemma_KRS_4} and H\"older's inequality \eqref{Holder} that
\begin{align*}
|\<\Gamma_H P_{\mathrm{ac}}(H)F,G\>_T|
&\lesssim \norm{P_{\mathrm{ac}}(H)}_{\mathbb B(L^{p_s,2})}\norm{F}_{L^2_tL^{p_s,2}_x}+\norm{V_{1,j}\Gamma_HP_{\mathrm{ac}}(H)F}_{L^2_TL^2_x}\norm{V_2}_{L^{\frac{n}{2-s},\infty}}\\
&\quad\quad+\norm{V_1-V_{1,j}}_{L^{\frac ns,\infty}}\norm{\Gamma_HP_{\mathrm{ac}}(H)F}_{L^2_TL^{q_s,2}_x}
\end{align*}
uniformly in $T>0$. Taking $j$ large enough (which can be taken independently of $T$), the last term can be absorbed into the left hand side of \eqref{proof_theorem_Strichartz_1_1}, implying
$$
\norm{\Gamma_HP_{\mathrm{ac}}(H)F}_{L^2_TL^{q_s,2}_x}\lesssim \norm{F}_{L^2_tL^{p_s,2}_x}+\norm{V_{1,j}\Gamma_HP_{\mathrm{ac}}(H)F}_{L^2_TL^2_x}
$$
uniformly in $T>0$. To deal with the term $\norm{V_{1,j}\Gamma_HP_{\mathrm{ac}}(H)F}_{L^2_TL^2_x}$, we use  \eqref{lemma_Duhamel_1_3} to write
$$
\<V_{1,j}\Gamma_HP_{\mathrm{ac}}(H)F,\wtilde G\>_T
=\<\Gamma_0F,P_{\mathrm{ac}}(H)V_{1,j}\wtilde G\>_T
-i\<V_2\Gamma_0F,V_1\Gamma_H^*P_{\mathrm{ac}}(H)V_{1,j}\wtilde G\>_T
$$
for all simple function $\widetilde G:\R\to \S$ satisfying $\norm{\wtilde G}_{L^2_TL^2_x=1}=1$. 
By \eqref{lemma_Strichartz_1_2} the first term enjoys
$$
|\<\Gamma_0F,P_{\mathrm{ac}}(H)V_{1,j}\wtilde G\>_T|\lesssim \norm{V_{1,j}}_{L^{\frac ns,\infty}}\norm{F}_{L^2_tL^{p_s,2}_x}\lesssim \norm{F}_{L^2_tL^{p_s,2}_x}
$$
uniformly in $T>0$ and $j$. On the other hand, since $V_2\Gamma_H^*P_{\mathrm{ac}}(H)V_{1,j}\wtilde G\in L^2_TL^2_x$ by Lemma \ref{lemma_Strichartz_3} and $V_1\Gamma_0F\in L^2_TL^2_x$ by \eqref{lemma_Strichartz_1_2}, the last term can be rewritten in the form
$$
\<V_2\Gamma_0F,V_1\Gamma_H^*P_{\mathrm{ac}}(H)V_{1,j}\wtilde G\>_T
=\<V_1\Gamma_0F,V_2\Gamma_H^*P_{\mathrm{ac}}(H)V_{1,j}\wtilde G\>_T.
$$
Using \eqref{lemma_Strichartz_1_2}, Lemma \ref{lemma_Strichartz_3} and a duality argument, we then obtain
$$
|\<V_1\Gamma_0F,V_2\Gamma_H^*P_{\mathrm{ac}}(H)V_{1,j}\wtilde G\>_T|\lesssim \norm{F}_{L^2_tL^{p_s,2}_x}.
$$
Putting it all together, we conclude that
$$
\norm{\Gamma_HP_{\mathrm{ac}}(H)F}_{L^2_TL^{q_s,2}_x}\lesssim \norm{F}_{L^{2}_tL^{p_s,2}_x}
$$
uniformly in $T>0$, which implies the desired estimates \eqref{theorem_Strichartz_1_2} for $\frac{n}{2(n-1)} <s< \frac{3n-4}{2(n-1)}$. The cases $s=\frac{n}{2(n-1)}$ or $\frac{3n-4}{2(n-1)}$ can be obtained analogously by using \eqref{lemma_Strichartz_1_3} instead of \eqref{lemma_Strichartz_1_2}. 
\end{proof}

\section{Spectral multiplier theorem}
\label{section_multiplier}

This section is devoted to the proof of Lemma \ref{lemma_equivalence_2} and Theorem \ref{theorem_multiplier_1}. Proofs are based on an abstract method by Chen et al \cite{COSY} which, in the Euclidean case, can be stated as follows. 

\begin{proposition}[{\cite[Theorem A]{COSY}}]
\label{proposition_COSY}
Let $1\le p_0<2$ and $1\le q\le \infty$. Let $L$ be a non-negative self-adjoint operator on $L^2(\R^n)$ satisfying following two conditions:
\begin{itemize}
\item Davies-Gaffney's estimate: for any open sets $U_j\subset \R^n$ and $\psi_j\in L^2(U_j)$, $j=1,2$
\begin{align}
\label{DG}
|\<e^{-tL}\psi_1,\psi_2\>|\le \exp\Big(-\frac{d(U_1,U_2)^2}{4t}\Big)\norm{\psi_1}_{L^2}\norm{\psi_2}_{L^2},
\end{align}
where $d(U_1,U_2):=\inf_{x_1\in U_1,x_2\in U_2}|x_1-x_2|$ is the distance between $U_1$ and $U_2$. 
\item Stein-Tomas type restriction estimate: for any $a>0$ and any bounded Borel function $F_0$ on $\R$ supported in $[0,a]$, $F_0(\sqrt{L})\in \mathbb B(L^{p_0},L^2)$ and 
\begin{align}
\label{ST}
\norm{F_0(\sqrt{L})\mathds1_{B(x,r)}}_{\mathbb B(L^{p_0},L^2)}\lesssim a^{n(1/p_0-1/2)}\norm{F_0(a\cdot)}_{L^q}
\end{align}
for all $x\in \R^n$ and $r\ge a^{-1}$, where $B(x,r)=\{y\ |\ |y-x|<r\}$. 
\end{itemize}
Then, for any bounded Borel function $F$ on $\R$ satisfying
\begin{align}
\label{Hormander}
|F|_{\mathcal W(\beta,q)}:=\sup_{t>0}\norm{\psi(\cdot)F(t\cdot)}_{\mathcal W^{\beta,q}(\R)}<\infty
\end{align}
with some nontrivial $\psi\in C_0^\infty$ supported in $(0,\infty)$ and $\beta>\max\{n(1/p_0-1/2),1/q\}$ such that $\beta$ is an integer if $q=\infty$, $F(\sqrt{L})$ is bounded on $L^p$ for all $p_0<p<p_0'$ and satisfies
$$
\norm{F(\sqrt{L})}_{\mathbb B(L^p)}\le C_\beta(|F|_{\mathcal W(\beta,q)}+|F(0)|). 
$$
\end{proposition}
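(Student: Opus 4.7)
The statement is quoted directly from \cite{COSY}, so in the paper itself it would simply be cited; nevertheless, here is how I would approach reproving it from scratch. The plan is to combine a dyadic decomposition of the multiplier $F$ with the two hypotheses in a Calder\'on--Zygmund style argument, following the standard template of Duong--Ouhabaz--Sikora. First I would normalize $F(0)=0$ (the constant contribution is handled by functional calculus on $L^2$ and the $\beta>1/q$ Sobolev embedding $\mathcal W^{\beta,q}\hookrightarrow L^\infty$) and fix a dyadic partition of unity $\{\psi_k\}_{k\in\Z}$ on $(0,\infty)$ with $\psi_k(\lambda)=\psi(2^{-k}\lambda)$ supported in $[2^{k-1},2^{k+1}]$. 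Writing $F_k(\lambda)=\psi_k(\lambda)F(\lambda)$, one has $F(\sqrt L)=\sum_k F_k(\sqrt L)$ in the strong $L^2$-sense. The hypothesis \eqref{Hormander} together with the support of $\psi_k$ yields the scale-invariant control $\|F_k(2^k\cdot)\|_{\mathcal W^{\beta,q}}\lesssim |F|_{\mathcal W(\beta,q)}$ uniformly in $k$.

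Next I would establish the key kernel-level off-diagonal estimate: for each ball $B=B(x_0,r)$ with $r\ge 2^{-k}$ and for all $m\ge 0$,
\begin{equation*}
\|\mathds 1_{U_j(B)}F_k(\sqrt L)\mathds 1_{B}\|_{\mathbb B(L^{p_0},L^2)}\lesssim 2^{kn(1/p_0-1/2)}(1+2^k2^jr)^{-m}\|F_k(2^k\cdot)\|_{\mathcal W^{\beta,q}},
\end{equation*}
where $U_j(B)=B(x_0,2^{j+1}r)\setminus B(x_0,2^jr)$ is the $j$-th annular shell. This is the heart of the matter. The idea is to express $F_k(\sqrt L)$ via its Fourier representation and use the Davies--Gaffney estimate \eqref{DG} to translate the (nearly) finite-propagation speed of the wave group (or more directly, Gaussian decay estimates for $e^{-tL}$ applied to appropriate analytic extensions of $F_k$) into an almost-compact-support property modulo exponential tails. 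Combining this with the Stein--Tomas restriction bound \eqref{ST} on the in-ball piece yields the desired decay on $U_j(B)$, provided $\beta>\max\{n(1/p_0-1/2),1/q\}$ so that the Sobolev norm dominates the relevant integral.

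Given this off-diagonal estimate, the remainder is now a routine Calder\'on--Zygmund argument. The plan is to show $F(\sqrt L)$ is of weak-type $(p_0,p_0)$ on functions, which combined with its $L^2$-boundedness (immediate from spectral theory and $\|F\|_\infty\lesssim |F|_{\mathcal W(\beta,q)}+|F(0)|$) and Marcinkiewicz interpolation gives $L^p$-boundedness for $p_0<p<2$. The range $2<p<p_0'$ follows by duality from the self-adjointness of $L$ and the fact that $F_k(\sqrt L)^*=\overline{F_k}(\sqrt L)$ satisfies the same bounds. For the weak-type bound, one decomposes $f\in L^{p_0}$ at height $\alpha$ via the Calder\'on--Zygmund decomposition, handles the good part in $L^2$, and for each bad atom $b_Q$ supported in a cube $Q$ of sidelength $\ell(Q)$ one splits $F(\sqrt L)b_Q=\sum_k F_k(\sqrt L)b_Q$, using the off-diagonal estimate on annular shells around $2Q$ for those $k$ with $2^k\ge \ell(Q)^{-1}$, and the $L^{p_0}\to L^2$ bound from \eqref{ST} combined with trivial volume estimates for the low-frequency part.

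The main obstacle is really the off-diagonal estimate in the second step: converting \eqref{DG} (a pointwise estimate for the \emph{heat} semigroup) into a quantitative localization of $F_k(\sqrt L)$ at scale $2^{-k}$ requires a careful complex analysis / subordination argument, and making the summability in $k$ and $j$ compatible with the sharp Sobolev exponent $\beta>\max\{n(1/p_0-1/2),1/q\}$ needs the precise Sobolev embedding on the line. Once the off-diagonal estimate is in place, the Calder\'on--Zygmund wrap-up is standard.
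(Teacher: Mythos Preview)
Your proposal is correct: as you note at the outset, the paper does not prove this proposition but simply quotes it as \cite[Theorem~A]{COSY}, and your sketch (dyadic decomposition of $F$, off-diagonal estimates from Davies--Gaffney combined with the restriction bound, then a Calder\'on--Zygmund argument for the weak-type $(p_0,p_0)$ bound) is indeed the strategy carried out in \cite{COSY}. The only point worth adding is the remark the paper makes immediately after the statement: \cite{COSY} actually assumes finite-speed propagation for $\cos(t\sqrt{L})$ rather than the Davies--Gaffney estimate \eqref{DG}, but the two are equivalent by \cite[Theorem~3.4]{CoSi}, which is precisely what makes your ``convert heat decay into localization of $F_k(\sqrt L)$'' step go through cleanly.
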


Strictly speaking, instead of Davies-Gaffney's estimate, it was assumed in \cite{COSY} that $L$ satisfies the so-called finite-speed propagation property (see (\text{FS}) in pages 229 of \cite{COSY}). However, these two conditions are known to be equivalent (see \cite[Theorem 3.4]{CoSi}). Moreover, \eqref{DG} is always satisfied for non-negative Schr\"odinger operators $-\Delta+V(x)$ as shown by Coulhon-Sikora \cite{CoSi}. 

\begin{lemma}[{\cite[Theorem 3.3]{CoSi}}]
\label{lemma_CS}
Let $L=-\Delta+V(x)$ with real-valued $V\in L^1_{\loc}(\R^n)$ such that $L\ge0$ as a quadratic form. Then \eqref{DG} is satisfied. 
\end{lemma}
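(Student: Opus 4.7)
The plan is to apply Davies's exponential perturbation method. First I would fix a bounded Lipschitz function $\phi\colon \R^n\to\R$ with $|\nabla\phi|\le 1$ almost everywhere, $\phi=0$ on $U_2$, and $\phi\le -d(U_1,U_2)$ on $U_1$ --- for instance, $\phi(x):=-\min(\dist(x,U_2),R)$ with $R\ge d(U_1,U_2)$. Since $\phi$ is bounded, the multiplication operator $M_\alpha\colon u\mapsto e^{\alpha\phi}u$ is bounded and invertible on $L^2$ for every $\alpha\in\R$. I would then introduce the conjugated sesquilinear form
\begin{equation*}
q_\alpha(u,v):=q_L(M_\alpha u,\,M_{-\alpha}v),\qquad u,v\in D(q_L),
\end{equation*}
where $q_L(u,v)=\int \nabla u\cdot\overline{\nabla v}\,dx+\int V u\bar v\,dx$ is the non-negative closed form defining $L$. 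Lipschitz multiplication preserves $D(q_L)$, so this form is well defined.

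Next I would carry out the core algebraic computation. Expanding $\nabla(M_\alpha u)\cdot\overline{\nabla(M_{-\alpha}v)}$ and observing that the cross-term $\alpha\nabla\phi\cdot(u\overline{\nabla u}-\bar u\nabla u)$ is purely imaginary yields
\begin{equation*}
\Re q_\alpha(u,u)=q_L(u,u)-\alpha^2\int|\nabla\phi|^2|u|^2\,dx\ge -\alpha^2\norm{u}_{L^2}^2,
\end{equation*}
where the last inequality uses $q_L\ge 0$ together with $|\nabla\phi|\le 1$. Hence $q_\alpha+\alpha^2$ is a sectorial closed form and generates a strongly continuous semigroup $e^{-tL_\alpha}$ on $L^2$ satisfying $\norm{e^{-tL_\alpha}}_{\mathbb B(L^2)}\le e^{t\alpha^2}$. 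The intertwining $e^{-tL}=M_{-\alpha}\,e^{-tL_\alpha}\,M_\alpha$, valid because the two forms are conjugate through the bounded invertible multipliers $M_{\pm\alpha}$, then yields for $\psi_j\in L^2(U_j)$ and $\alpha>0$
\begin{equation*}
|\<e^{-tL}\psi_1,\psi_2\>|\le e^{t\alpha^2}\norm{M_\alpha\psi_1}_{L^2}\norm{M_{-\alpha}\psi_2}_{L^2}\le e^{t\alpha^2-\alpha d(U_1,U_2)}\norm{\psi_1}_{L^2}\norm{\psi_2}_{L^2},
\end{equation*}
since $\phi\le -d(U_1,U_2)$ on $U_1$ and $\phi=0$ on $U_2$. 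Optimizing at $\alpha=d(U_1,U_2)/(2t)$ produces the desired exponent $-d(U_1,U_2)^2/(4t)$.

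The main obstacle is to make the quadratic-form manipulations rigorous when $V\in L^1_{\loc}$ is only constrained by $L\ge 0$: then $V_-$ may be unbounded and $D(q_L)$ need not coincide with $\H^1$. One must verify that $M_{\pm\alpha}$ preserves $D(q_L)$ with the expected product rule for the gradient, which I would establish by first treating $\phi\in C^2$ with bounded derivatives, deriving the identity for $\Re q_\alpha(u,u)$ directly, and then passing to the limit via a standard mollification-plus-truncation approximation of $\phi$, using $q_L\ge 0$ to control the limit uniformly. A related subtlety is that $L_\alpha$ is not self-adjoint in general, so the semigroup bound must come from sectoriality of the form rather than from the spectral theorem, and the intertwining identity is then obtained at the resolvent level (from the corresponding conjugation of the forms) and inverted via the Laplace transform to recover $e^{-tL}$.
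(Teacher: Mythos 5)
The paper offers no proof of this lemma at all: it is quoted verbatim from Coulhon--Sikora, and the author explicitly leans on their result (together with their Theorem 3.4 equivalence with finite propagation speed) precisely to avoid proving \eqref{DG} in the stated generality. So your Davies-type exponential-weight argument is a genuinely different route from the one the paper relies on. Its core algebra is correct: the cross term is purely imaginary, $\Re q_\alpha(u,u)=q_L(u,u)-\alpha^2\int|\nabla\phi|^2|u|^2\ge-\alpha^2\norm{u}_{L^2}^2$, the weight $\phi=-\min(\dist(\cdot,U_2),R)$ does what you claim, and optimizing $t\alpha^2-\alpha d$ at $\alpha=d/(2t)$ gives the exponent $-d^2/(4t)$. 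Moreover, for the operators actually used in this paper (where $V\in L^{n/2,\infty}_0$ is infinitesimally form-bounded, so the form domain is $\H^1$ and $\norm{\nabla u}_{L^2}^2\lesssim q_L(u,u)+\norm{u}_{L^2}^2$), your scheme closes without difficulty: $e^{\pm\alpha\phi}$ preserves $\H^1$, the twisted form is closed and sectorial, and the intertwining is routine.

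However, for the lemma as stated ($V\in L^1_{\loc}$ with only $L\ge0$ as a form) there is a genuine gap, and it sits exactly at the steps you assert rather than prove. When $V_-$ is critically singular (e.g.\ the Hardy potential $-\tfrac{(n-2)^2}{4}|x|^{-2}$), the Friedrichs form domain $D(q_L)$ is strictly larger than $\H^1$, so $\norm{\nabla u}_{L^2}$ may be infinite for $u\in D(q_L)$; then the only available bound for the imaginary part, $|\Im q_\alpha(u,u)|\le 2\alpha\norm{\nabla\phi}_{L^\infty}\norm{\nabla u}_{L^2}\norm{u}_{L^2}$, is vacuous, and neither sectoriality nor closedness of $q_\alpha+\alpha^2$, nor the invariance $e^{\pm\alpha\phi}D(q_L)\subset D(q_L)$, follows from $q_L\ge0$; the identification $e^{-tL_\alpha}=M_{\mp\alpha}e^{-tL}M_{\pm\alpha}$ also needs these facts (or a careful core/resolvent argument) and is not automatic. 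Your proposed remedy---first taking $\phi\in C^2$ and then mollifying---targets the regularity of the weight, which was never the obstruction; the obstruction is the potential and the resulting form domain. Either restrict the lemma to the subcritical setting actually needed in the paper (stating that this is what you prove), or supply the missing functional-analytic input for $L^1_{\loc}$ potentials---which is in effect what the citation to Coulhon--Sikora, whose proof proceeds through finite propagation speed for the associated wave equation rather than through conjugated forms, is there to provide.
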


When $q=\infty$, \eqref{ST} can be replaced by a $L^p$-$L^2$ estimate of the Schr\"odinger semigroup. 

\begin{lemma}
\label{lemma_heat}
Let $1\le p_0<2$. Then \eqref{ST} with $q=\infty$ follows from 
\begin{align}
\label{lemma_heat_1}
\norm{e^{-t^2L}}_{\mathbb B(L^{p_0},L^2)}\lesssim t^{-n(\frac{1}{p_0}-\frac12)},\quad t>0. 
\end{align}
\end{lemma}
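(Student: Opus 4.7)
Since we are aiming at \eqref{ST} with $q=\infty$ and $F_0$ supported in $[0,a]$, the right-hand side simplifies to $\|F_0(a\cdot)\|_{L^\infty}=\|F_0\|_{L^\infty}$, so the target estimate is
\[
\|F_0(\sqrt{L})\mathds1_{B(x,r)}\|_{\mathbb{B}(L^{p_0},L^{2})}\lesssim a^{n(1/p_0-1/2)}\|F_0\|_{L^\infty}
\]
uniformly in $x\in\R^n$ and $r\ge a^{-1}$. The plan is the standard ``insert a Gaussian'' trick: factorize
\[
F_0(\sqrt{L})=\bigl[F_0(\sqrt{L})\,e^{L/a^{2}}\bigr]\,e^{-L/a^{2}},
\]
and estimate the two factors separately.

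First, since $F_0$ is supported in $[0,a]$, functional calculus gives
\[
\|F_0(\sqrt{L})\,e^{L/a^{2}}\|_{\mathbb{B}(L^{2})}=\sup_{\lambda\in[0,a]}\bigl|F_0(\lambda)\bigr|e^{\lambda^{2}/a^{2}}\le e\,\|F_0\|_{L^\infty}.
\]
Second, the assumption \eqref{lemma_heat_1} with $t=1/a$ yields
\[
\|e^{-L/a^{2}}\|_{\mathbb{B}(L^{p_0},L^{2})}\lesssim a^{n(1/p_0-1/2)}.
\]
Since $\mathds1_{B(x,r)}$ is a contraction on $L^{p_0}$, concatenating these two bounds produces
\[
\|F_0(\sqrt{L})\mathds1_{B(x,r)}\|_{\mathbb{B}(L^{p_0},L^{2})}\le \|F_0(\sqrt{L})e^{L/a^{2}}\|_{\mathbb{B}(L^{2})}\,\|e^{-L/a^{2}}\|_{\mathbb{B}(L^{p_0},L^{2})}\lesssim a^{n(1/p_0-1/2)}\|F_0\|_{L^\infty},
\]
which is precisely \eqref{ST} with $q=\infty$.

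There is essentially no obstacle here; the only point worth noting is that the localization $\mathds1_{B(x,r)}$ and the restriction $r\ge a^{-1}$ play no role under the present hypothesis (they are useful in the general $q<\infty$ formulation), because \eqref{lemma_heat_1} is already a global $L^{p_0}\to L^{2}$ smoothing bound with the sharp scaling $t^{-n(1/p_0-1/2)}$ at the natural scale $t=1/a$ dictated by the spectral support of $F_0$. The Gaussian factor $e^{L/a^{2}}$ on $L^{2}$ serves only to convert spectral support into a uniform multiplier bound, while absorbing the $e^{-L/a^{2}}$ that provides the correct scaling behavior from $L^{p_0}$ to $L^{2}$.
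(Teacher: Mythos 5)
Your proof is correct, and it takes a genuinely different route from the paper's. The paper invokes \cite[Proposition~1.3]{COSY}, which asserts that \eqref{ST} with $q=\infty$ is \emph{equivalent} to the localized Gaussian bound
\[
\norm{e^{-t^2L}\mathds1_{B(x,r)}}_{\mathbb B(L^{p_0},L^2)}\lesssim |B(x,r)|^{\frac{1}{p_0}-\frac12}(rt^{-1})^{n(\frac12-\frac{1}{p_0})},\quad r\ge t,
\]
and then merely observes that this follows from \eqref{lemma_heat_1} together with $|B(x,r)|\le C_n r^n$. You instead prove the implication directly and self-containedly by the factorization $F_0(\sqrt{L})=[F_0(\sqrt{L})e^{L/a^2}]\,e^{-L/a^2}$, bounding the bracketed factor on $L^2$ by $e\norm{F_0}_{L^\infty}$ via spectral support and functional calculus, and the heat factor in $\mathbb B(L^{p_0},L^2)$ by \eqref{lemma_heat_1} at $t=1/a$. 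Your route avoids the external citation entirely and in fact yields the stronger \emph{unlocalized} bound $\norm{F_0(\sqrt{L})}_{\mathbb B(L^{p_0},L^2)}\lesssim a^{n(1/p_0-1/2)}\norm{F_0}_{L^\infty}$, from which the localized version follows trivially since $\mathds1_{B(x,r)}$ is a contraction on $L^{p_0}$; the trade-off is that you only get the direction needed here, whereas \cite{COSY} gives both directions and a formulation in which the ball localization genuinely matters (relevant for general $q<\infty$ and for non-doubling or weighted settings). Your observation that the constraint $r\ge a^{-1}$ is vacuous under this hypothesis is accurate and worth keeping.
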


\begin{proof}
By \cite[Proposition 1.3]{COSY}, \eqref{ST} with $q=\infty$ is equivalent to 
$$
\norm{e^{-t^2L}\mathds1_{B(x,r)}}_{\mathbb B(L^{p_0},L^2)}\lesssim |B(x,r)|^{\frac{1}{p_0}-\frac12}(rt^{-1})^{n(\frac{1}{p_0}-\frac12)},\quad t>0,\ x\in \R^n,\ r\ge t,
$$
which clearly follows from \eqref{lemma_heat_1} since $|B(x,r)|\le C_nr^n$. 
\end{proof}

Now we show Lemma \ref{lemma_equivalence_2} whose proof is classical and based on Stein's complex interpolation theorem. Let us fix $M>|\inf\sigma(H)|+1$ so that $H+M\ge I$. A key observation is the following. 

\begin{lemma}
\label{lemma_proof_equivalence_2_1}
For any $\alpha\in \R$ and $\frac{2n}{n+3}<p<\frac{2n}{n-3}$, 
$
\norm{(H+M)^{i\alpha}}_{\mathbb B(L^p)}\le C_M\<\alpha\>^n. 
$ 
\end{lemma}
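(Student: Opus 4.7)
My plan is to apply the abstract spectral multiplier theorem (Proposition \ref{proposition_COSY}) to the shifted operator $L := H+M$, which satisfies $L \ge I$ by the choice of $M$. Taking $F_0(\mu) := \mu^{2i\alpha}$, one has $F_0(\sqrt{L}) = L^{i\alpha} = (H+M)^{i\alpha}$, so the conclusion of that proposition will yield the required bound once I verify its two hypotheses for $L$ and control the H\"ormander norm of $F_0$.

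For Davies-Gaffney, I write $L = -\Delta + (V+M)$; since $V + M \in L^1_{\loc}(\R^n)$ (as $V \in L^{n/2,\infty}_0$ embeds locally into $L^1$ via H\"older's inequality for Lorentz spaces) and $L \ge 0$ as a quadratic form, Lemma \ref{lemma_CS} yields \eqref{DG} for $e^{-tL}$. For the Stein-Tomas estimate \eqref{ST}, I fix an arbitrary $p_0 \in (\frac{2n}{n+3}, \frac{2(n+1)}{n+3})$, take $q = \infty$, and bound $\norm{F(\sqrt{L})}_{\mathbb B(L^{p_0}, L^2)}^2 = \norm{|F|^2(\sqrt{L})}_{\mathbb B(L^{p_0}, L^{p_0'})}$ for $F$ supported in $[0,a]$. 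Via the change of variable $\mu = \sqrt{\lambda + M}$, the absolutely continuous part of the spectral measure of $\sqrt{L}$ satisfies $\norm{E'_{\sqrt{L}}(\mu)}_{\mathbb B(L^{p_0}, L^{p_0'})} = 2\mu \norm{E'_H(\mu^2 - M)}_{\mathbb B(L^{p_0}, L^{p_0'})} \lesssim \mu^{n(1/p_0 - 1/p_0') - 1}$ for $\mu \gtrsim \sqrt{M}$ by Theorem \ref{theorem_spectral_measure_1}, while the finite sum of point masses at $\sqrt{\lambda_j + M}$ coming from the negative eigenvalues of $H$ is controlled by Lemma \ref{lemma_KRS_4}; integrating $|F(\mu)|^2$ against these contributions produces \eqref{ST}.

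The H\"ormander norm of $F_0$ is immediate: for any nontrivial $\psi \in C_0^\infty((0,\infty))$, the identity $\psi(\mu) F_0(t\mu) = t^{2i\alpha}\psi(\mu)\mu^{2i\alpha}$ together with direct differentiation of $\mu^{2i\alpha}$ give $|\partial_\mu^k[\psi(\mu)\mu^{2i\alpha}]| \lesssim_{\psi,k} \<\alpha\>^k$ on $\supp\psi$, whence $\norm{\psi(\cdot)F_0(t\cdot)}_{\mathcal W^{\beta,\infty}} \lesssim_{\psi,\beta} \<\alpha\>^\beta$ uniformly in $t>0$ for every $\beta \ge 0$. Choosing $\beta = n > 3/2 > \max\{n(1/p_0 - 1/2), 0\}$ and invoking Proposition \ref{proposition_COSY} yields $\norm{(H+M)^{i\alpha}}_{\mathbb B(L^p)} \le C_M \<\alpha\>^n$ for $p \in (p_0, p_0')$, and varying $p_0$ over the admissible range covers all $p$ with $\frac{2n}{n+3}<p<\frac{2n}{n-3}$.

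The main obstacle will be the verification of the Stein-Tomas hypothesis \eqref{ST} for $p_0$ near the sharp endpoint $\frac{2n}{n+3}$: there I must combine the extended spectral-measure bound of Theorem \ref{theorem_spectral_measure_1} (which already goes beyond the classical Stein-Tomas range) for the continuous part with a careful accounting of the finite-rank contribution from the negative eigenvalues of $H$, so that both pieces fit into the $a^{n(1/p_0 - 1/2)}$ scaling required by \eqref{ST} uniformly in $a > 0$.
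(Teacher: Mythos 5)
Your proposal follows essentially the same strategy as the paper: apply Proposition \ref{proposition_COSY} with $q=\infty$ to $L=H+M$, verify Davies--Gaffney via Lemma \ref{lemma_CS}, verify the Stein--Tomas hypothesis \eqref{ST} by splitting into the absolutely continuous and discrete parts, and control the H\"ormander norm of $\mu^{2i\alpha}$ directly. The one genuine deviation is that the paper first invokes Lemma \ref{lemma_heat} to reduce \eqref{ST} with $q=\infty$ to the single heat-semigroup bound $\norm{e^{-t^2L}}_{\mathbb B(L^{p_0},L^2)}\lesssim t^{-n(1/p_0-1/2)}$, and then verifies that bound from $\norm{e^{-2t^2L}P_{\mathrm{ac}}(H)}_{\mathbb B(L^{p_0},L^{p_0'})}\lesssim\int_0^\infty e^{-2t^2(\lambda+M)}\lambda^{\frac n2(1/p_0-1/p_0')-1}\,d\lambda$ (using Theorem \ref{theorem_spectral_measure_1}) together with the finite-rank exponential decay of the discrete part; you instead verify \eqref{ST} directly, writing the spectral measure of $\sqrt{L}$ via the change of variable $\mu=\sqrt{\lambda+M}$. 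Both routes terminate with the same inputs (Theorem \ref{theorem_spectral_measure_1} for the a.c.\ part, Lemma \ref{lemma_KRS_4} for the finitely many negative eigenvalues), so the difference is largely one of packaging: the paper's detour through Lemma \ref{lemma_heat} keeps the computation as a single Gamma-function integral in $\lambda$, whereas your direct route needs a slightly more careful treatment of the Jacobian factor.

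On that last point, one small imprecision: the stated pointwise bound $\norm{E'_{\sqrt L}(\mu)}_{\mathbb B(L^{p_0},L^{p_0'})}\lesssim\mu^{n(1/p_0-1/p_0')-1}$ is not what Theorem \ref{theorem_spectral_measure_1} gives; the change of variable yields $2\mu\,(\mu^2-M)^{\frac n2(1/p_0-1/p_0')-1}$, and for $p_0>\frac{2n}{n+2}$ the exponent on $(\mu^2-M)$ is negative, so this is not $\lesssim\mu^{n(1/p_0-1/p_0')-1}$ uniformly down to $\mu=\sqrt M$. This does not break the argument --- the singularity is still integrable since $\frac n2(1/p_0-1/p_0')>0$, and $\int_{\sqrt M}^a 2\mu(\mu^2-M)^{\frac n2(1/p_0-1/p_0')-1}\,d\mu\lesssim a^{n(1/p_0-1/p_0')}$ --- but you should carry the $(\mu^2-M)$-power through the computation rather than replacing it by a power of $\mu$. (Alternatively, restricting to $p_0\le\frac{2n}{n+2}$, as the paper does, makes the exponent nonnegative and the pointwise bound harmless, and still covers the full range $\frac{2n}{n+3}<p<\frac{2n}{n-3}$ by varying $p_0$.)
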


\begin{proof}
It is easy to see that $F(x)=x^{2i\alpha}$ satisfies $|F|_{\mathcal W({n,\infty})}\le C_n\<\alpha\>^n$ and $|F(0)|=1$. Let us fix $2n/(n+3)<p_0\le 2n/(n+2)$ arbitrarily. By virtue of Proposition \ref{proposition_COSY} and Lemmas \ref{lemma_CS} and \ref{lemma_heat}, it suffices to show that $L:=H+M$ satisfies \eqref{lemma_heat_1}. Decompose $e^{-t^2L}$ into the absolutely continuous part $e^{-t^2L}P_{\mathrm{ac}}(H)$ and the discrete part $\sum_{j=1}^Ne^{-t^2L}P_j$. 

For the discrete part, since $\lambda_j+M\ge1$, we learn by Lemma \ref{lemma_KRS_4} that
\begin{align*}
\norm{e^{-t^2L}P_jf}_{L^2}=\norm{e^{-t^2(\lambda_j+M)}P_jf}_{L^2}\le e^{-t^2}\norm{\varphi_j}_{L^2}\norm{\varphi_j}_{L^{p_0'}}\norm{f}_{L^{p_0}}\lesssim e^{-t^2}\norm{f}_{L^{p_0}}. 
\end{align*}
On the other hand, it follows from the spectral decomposition theorem that
$$
e^{-t^2L}P_{\mathrm{ac}}(H)(e^{-t^2L}P_{\mathrm{ac}}(H))^*=e^{-2t^2L}P_{\mathrm{ac}}(H)=\int_0^\infty e^{-2t^2(\lambda+M)}dE_{H}(\lambda).
$$
Theorem \ref{theorem_spectral_measure_1} then implies
\begin{align*}
\norm{e^{-2t^2L}P_{\mathrm{ac}}(H)}_{\mathbb B(L^{p_0},L^{p'_0})}
\lesssim \int_0^\infty e^{-2t^2(\lambda+M)}\lambda^{\frac n2(\frac{1}{p_0}-\frac{1}{p'_0})-1}d\lambda
\lesssim t^{-n(\frac{1}{p_0}-\frac{1}{p'_0})}=t^{-2n(\frac{1}{p_0}-\frac12)}. 
\end{align*}
Since $\norm{e^{-t^2L}P_{\mathrm{ac}}(H)}_{\mathbb B(L^{p_0},L^2)}\le \norm{e^{-2t^2L}P_{\mathrm{ac}}(H)}_{\mathbb B(L^{p_0},L^{p_0'})}^{1/2}$ by the duality, \eqref{lemma_heat_1} follows. 
\end{proof}

\begin{proof}[Proof of Lemma \ref{lemma_equivalence_2}]
We may assume $1<s<3/2$ without loss of generality since the case when $0\le s\le1$ follows from Stein's complex interpolation \cite{Ste} and the estimate $$
\norm{(-\Delta+M)^{1/2}(H+M)^{-1/2}}_{\mathbb B(L^2)}+\norm{(-\Delta+M)^{-1/2}(H+M)^{1/2}}_{\mathbb B(L^2)}<\infty
$$
which is a consequence of the fact that the form domain of $H$ is $\H^1$. 

For $f,g\in \S$, we consider a function 
$
G(z)=\<(H+M)^{-z}f,(-\Delta+M)^{z}g\>
$
which is continuous on $0\le \Re z\le1$ and analytic in $0< \Re z<1$. By Corollary \ref{corollary_KRS_2}  and Lemma \ref{lemma_proof_equivalence_2_1}, we have for $\frac{2n}{n+3}<r_1<\frac{2n}{n-3}$ and $\frac{2n}{n+3}<r_2<\frac{2n}{n+1}$,
\begin{align*}
|G(it)|
&\le \norm{(H+M)^{-it}f}_{L^{r_1}}\norm{(-\Delta+M)^{it}g}_{L^{r_1'}}
\lesssim \<t\>^{2n}\norm{f}_{L^{r_1}}\norm{g}_{L^{r_1'}},\\
|G(1+it)|
&\le \norm{(-\Delta+M)(H+M)^{-1-it}f}_{L^{r_2}}\norm{(-\Delta+M)^{it}g}_{L^{r_2'}}
\lesssim \<t\>^{2n}\norm{f}_{L^{r_2}}\norm{g}_{L^{r_2'}}, 
\end{align*}
where, since $(-\Delta+M)(H+M)^{-1}=1-V(H+M)^{-1}$, the second estimate can be verified as\begin{align*}
\norm{(-\Delta+M)(H+M)^{-1}}_{\mathbb B(L^{r_2})}
\le 1+\norm{V(H+M)^{-1}}_{\mathbb B(L^{r_2})}
\le 1+C_M\norm{V}_{L^{\frac{n}{2},\infty}}.
\end{align*}
Let $r_1=\frac{2n}{n-2s}$ and $r_2=\frac{2n}{n+2(2-s)}$. Since
$
1/2=(1-s/2)(1/r_1)+(s/2)(1/r_2)
$, we apply Stein's complex interpolation theorem to $G$, implying $|G(s/2)|\le C_t\norm{f}_{L^2}\norm{g}_{L^2}$. This gives us  $$\norm{(-\Delta+M)^{s/2}(H+M)^{-s/2}}_{\mathbb B(L^2)}<\infty.$$ Applying the same argument to a function $G(z)=\<(-\Delta+M)^{-z}f,(H+M)^{z}g\>$, we also have $\norm{(H+M)^{s/2}(-\Delta+M)^{-s/2}}_{\mathbb B(L^2)}<\infty$. 
\end{proof}

Next we shall show Theorem \ref{theorem_multiplier_1}. 

\begin{proof}[Proof of Theorem \ref{theorem_multiplier_1}]
Since $H$ is assumed to be non-negative, the Davies-Gaffney estimate \eqref{DG} is satisfied. It thus remains to check the Stein-Tomas type restriction estimate \eqref{ST} with $q=2$. Let $\frac{2n}{n+3}<p_0<\frac{2n}{n+2}$ and $F_0\in L^\infty(\R)$ be such that $\supp F_0\subset [0,a]$. By Theorem \ref{theorem_spectral_measure_1}, 
\begin{align*}
\norm{F_0(\sqrt{H})^2}_{\mathbb B(L^{p_0},L^{p_0'})}
&\lesssim \int_0^{a^2} |F_0(\sqrt{\lambda})|^2\lambda^{\frac n2(\frac{1}{p_0}-\frac{1}{p'_0})-1}d\lambda\\
&\lesssim \norm{F_0}_{L^2([0,a])}^2a^{n(\frac{1}{p_0}-\frac{1}{p'_0})-1}
\lesssim a^{n(\frac{1}{p_0}-\frac{1}{p'_0})}\norm{F_0(a\cdot)}_{L^2}^2.
\end{align*}
Finally, by the duality, we have $\norm{F_0(\sqrt{H})}_{\mathbb B(L^{p_0},L^2)}\le 
\norm{F_0(\sqrt{H})^2}_{\mathbb B(L^{p_0},L^{p_0'})}^{1/2}$ which, combined with the above estimate for $\norm{F_0(\sqrt{H})^2}_{\mathbb B(L^{p_0},L^{p_0'})}$, implies \eqref{ST} with $q=2$. 
\end{proof}

We conclude this section with two immediate consequences of Theorem \ref{theorem_multiplier_1}. 

\begin{corollary}
\label{corollary_multiplier_1}
Suppose that $\mathcal E\cap[0,\infty)=\emptyset$, $H\ge0$ and $0\le s<3/2$. Then
$$
\norm{(-\Delta)^{s/2}H^{-s/2}}_{\mathbb B(L^2)}+
\norm{H^{s/2}(-\Delta)^{-s/2}}_{\mathbb B(L^2)}<\infty.
$$
\end{corollary}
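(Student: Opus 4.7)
The approach mirrors the proof of Lemma \ref{lemma_equivalence_2}, with the shift by $M$ removed by invoking Theorem \ref{theorem_multiplier_1} (which uses $H\ge0$) in place of Lemma \ref{lemma_proof_equivalence_2_1}, and Corollary \ref{corollary_KRS_1} (2) (which uses $0\notin \mathcal E$) in place of the trivial bound on the shifted resolvent. The plan is to realize $(-\Delta)^{s/2}H^{-s/2}$ as the value at $z=s/2$ of the analytic family $T_z:=(-\Delta)^z H^{-z}$, and run Stein complex interpolation on the strip $\{0\le \Re z\le 1\}$.

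The first ingredient is an imaginary-powers bound. Applied to $F(\lambda):=\lambda^{i\alpha}$ (with $F(0):=0$), Theorem \ref{theorem_multiplier_1} gives $\norm{H^{i\alpha}}_{\mathbb B(L^p)}\lesssim \<\alpha\>^\beta$ for any $\beta>3/2$ and all $\frac{2n}{n+3}<p<\frac{2n}{n-3}$, since $\norm{\psi(\lambda)\lambda^{i\alpha}}_{\H^\beta}\lesssim \<\alpha\>^\beta$ for any $\psi\in C_0^\infty((0,\infty))$; real interpolation extends the bound to $L^{p,2}$. Mikhlin's theorem gives the analogous bound for $(-\Delta)^{i\alpha}$ on all $L^{p,2}$ with $1<p<\infty$. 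The second ingredient, furnishing the $\Re z=1$ boundary, exploits $(-\Delta)H^{-1}=I-VH^{-1}$: for $1<s<3/2$ let $r_1=\frac{2n}{n-2s}$ and $r_2=\frac{2n}{n+4-2s}$, both lying in the admissible range of Theorem \ref{theorem_multiplier_1} and satisfying $\frac{1}{r_2}-\frac{1}{r_1}=\frac{2}{n}$ and $\frac{1}{2}=(1-\tfrac{s}{2})\tfrac{1}{r_1}+\tfrac{s}{2}\tfrac{1}{r_2}$. H\"older's inequality for Lorentz norms combined with Corollary \ref{corollary_KRS_1} (2) then yields
$$
\norm{VH^{-1}u}_{L^{r_2,2}}\le \norm{V}_{L^{n/2,\infty}}\norm{H^{-1}u}_{L^{r_1,2}}\lesssim \norm{u}_{L^{r_2,2}},
$$
so $(-\Delta)H^{-1}\in \mathbb B(L^{r_2,2})$.

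For $f,g\in \S$, set $G(z):=\<(-\Delta)^z H^{-z}f,g\>$ on the strip and write $(-\Delta)^{1+it}H^{-1-it}=(-\Delta)^{it}[(-\Delta)H^{-1}]H^{-it}$ using commutativity of functions of the same operator. The two ingredients produce
$$
|G(it)|\lesssim \<t\>^N\norm{f}_{L^{r_1,2}}\norm{g}_{L^{r_1',2}},\qquad |G(1+it)|\lesssim \<t\>^N\norm{f}_{L^{r_2,2}}\norm{g}_{L^{r_2',2}},
$$
and complex interpolation of Lorentz spaces at $\theta=s/2$ (so that the source interpolates to $L^{2,2}=L^2$, and likewise for the duals) gives $|G(s/2)|\lesssim\norm{f}_{L^2}\norm{g}_{L^2}$, equivalently $(-\Delta)^{s/2}H^{-s/2}\in\mathbb B(L^2)$. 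The bound for $H^{s/2}(-\Delta)^{-s/2}$ is obtained by rerunning the argument with $H$ and $-\Delta$ swapped, using $H(-\Delta)^{-1}=I+V(-\Delta)^{-1}$ together with the classical HLS inequality for the $\Re z=1$ estimate. The remaining range $0\le s\le 1$ is picked up by a further Stein interpolation between the trivial case $s=0$ and any $s_0\in(1,3/2)$ already handled.

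The main technical obstacle is ensuring that $G$ is a well-defined holomorphic function of subexponential growth on the strip: the symbol $\lambda^{-z}$ is singular at $\lambda=0$, so $H^{-z}$ need not be bounded on $L^2$ when $0$ lies in the continuous spectrum of $H$. This is handled by a standard $\varepsilon$-regularization $(H+\varepsilon)^{-z}$ and $(-\Delta+\varepsilon)^{-z}$: the boundary estimates above remain uniform in $\varepsilon>0$, so the interpolated bound is uniform in $\varepsilon$, and the limit $\varepsilon\searrow0$ is controlled on $\S$ via Corollary \ref{corollary_KRS_1} (2).
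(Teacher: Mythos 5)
Your proposal is correct and follows essentially the same route the paper intends: the corollary is proved exactly as Lemma \ref{lemma_equivalence_2}, replacing the shifted imaginary-power bound by the one coming from Theorem \ref{theorem_multiplier_1} (using $H\ge0$) and the shifted resolvent bound on the $\Re z=1$ boundary by the HLS-type inequality of Corollary \ref{corollary_KRS_1} (2) (using $0\notin\mathcal E$), with the exponents $r_1=\frac{2n}{n-2s}$, $r_2=\frac{2n}{n+2(2-s)}$ matching the paper's choice. Your $\varepsilon$-regularization of $H^{-z}$ and $(-\Delta)^{-z}$, with bounds uniform in $\varepsilon$ thanks to $\mathcal E=\emptyset$, is a sensible way to supply a detail the paper leaves implicit, and the rest of the argument (Lorentz-space H\"older, Stein interpolation at $\theta=s/2$, swapping $H$ and $-\Delta$, and interpolating down to $0\le s\le 1$) is sound.
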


\begin{proof}
The proof is analogous to that of Lemma \ref{lemma_equivalence_1}. 
\end{proof}

\begin{corollary}
\label{corollary_multiplier_2}
Suppose that $\mathcal E\cap[0,\infty)=\emptyset$ and $H\ge0$. Let $\varphi\in C_0^\infty(\R)$ be such that $\supp \varphi\subset(1/2,2)$, $0\le \varphi\le1$ and 
$
\sum_{j\in \Z}\varphi(2^{-j}\lambda)=1
$ for all $\lambda>0$. Then, for any $2n/(n+3)<p<2n/(n-3)$, there exists $C_p>0$ such that
$$
C_p^{-1}\norm{f}_{L^p}\le \bignorm{\Big(\sum_{j\in \Z}|\varphi(2^{-j}H)f(x)|^2\Big)^{1/2}}_{L^p}\le C_p\norm{f}_{L^p}. 
$$
In particular, if $2\le p<2n/(n-3)$, then
$$
\norm{f}_{L^p}\lesssim \Big(\sum_{j\in \Z}\norm{\varphi(2^{-j}H)f}_{L^p}^2\Big)^{1/2}. 
$$
\end{corollary}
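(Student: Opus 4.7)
The plan is to reduce the equivalence to Theorem \ref{theorem_multiplier_1} via a randomization argument (Khintchine's inequality) for the upper bound and a duality argument for the lower bound.

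For each sign sequence $\epsilon=(\epsilon_j)_{j\in\Z}\in\{\pm1\}^\Z$, set $F_\epsilon(\lambda):=\sum_{j\in\Z}\epsilon_j\varphi(2^{-j}\lambda)$. Since $\supp\varphi\subset(1/2,2)$, for any nontrivial $\psi\in C_0^\infty((0,\infty))$ and any $t>0$ only a uniformly bounded number of dilates $\varphi(2^{-j}t\cdot)$ meet $\supp\psi$, and on those terms the dilation parameter $2^{-j}t$ remains in a compact subset of $(0,\infty)$ independent of $t$. Hence $\norm{\psi(\cdot)F_\epsilon(t\cdot)}_{\H^\beta(\R)}$ is bounded uniformly in $t>0$ and in $\epsilon$ for every $\beta>3/2$, and $F_\epsilon(0)=0$. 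Theorem \ref{theorem_multiplier_1} then yields
\[
\sup_{\epsilon}\norm{\sum_{j\in\Z}\epsilon_j\varphi(2^{-j}H)f}_{L^p}\le C_p\norm{f}_{L^p},\quad \frac{2n}{n+3}<p<\frac{2n}{n-3}.
\]
Taking $\epsilon_j=r_j(t)$ with $\{r_j\}$ the Rademacher functions on $[0,1]$, raising to the $p$-th power, integrating in $t\in[0,1]$, and applying Fubini together with Khintchine's inequality to the pointwise $x$-integrand yields the upper bound $\norm{S_\varphi f}_{L^p}\le C_p\norm{f}_{L^p}$, where $S_\varphi f:=(\sum_j|\varphi(2^{-j}H)f|^2)^{1/2}$.

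For the lower bound, choose a real-valued $\tilde\varphi\in C_0^\infty((0,\infty))$ with $\tilde\varphi\equiv 1$ on $\supp\varphi$, so that $\tilde\varphi(2^{-j}H)\varphi(2^{-j}H)=\varphi(2^{-j}H)$. Since $H\ge0$ combined with $\mathcal E\cap[0,\infty)=\emptyset$ excludes every eigenvalue of $H$ (negative ones by positivity, non-negative ones because $\sigma_{\mathrm p}(H)\subset\mathcal E$), we have $E_H(\{0\})=0$. The partial sums $G_N:=\sum_{|j|\le N}\varphi(2^{-j}\cdot)$ are uniformly bounded and converge pointwise to $1$ on $(0,\infty)$, so $G_N(H)\to I$ strongly on $L^2$ by dominated convergence in the functional calculus. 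For $f,g\in L^2\cap L^p$ we therefore have
\[
\langle f,g\rangle=\lim_{N\to\infty}\sum_{|j|\le N}\langle\varphi(2^{-j}H)f,\tilde\varphi(2^{-j}H)g\rangle,
\]
and a pointwise Cauchy--Schwarz in $j$ followed by H\"older's inequality bound each partial sum by $\norm{S_\varphi f}_{L^p}\norm{S_{\tilde\varphi}g}_{L^{p'}}$. Since $\tilde\varphi$ satisfies the same hypotheses as $\varphi$, the upper bound applies to $S_{\tilde\varphi}$ and gives $\norm{S_{\tilde\varphi}g}_{L^{p'}}\lesssim\norm{g}_{L^{p'}}$; dualizing yields $\norm{f}_{L^p}\lesssim\norm{S_\varphi f}_{L^p}$.

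Finally, for $2\le p<2n/(n-3)$, Minkowski's inequality in $L^{p/2}(\R^n)$, valid since $p/2\ge 1$, yields
\[
\norm{S_\varphi f}_{L^p}^2=\bignorm{\sum_j|\varphi(2^{-j}H)f|^2}_{L^{p/2}}\le\sum_j\norm{\varphi(2^{-j}H)f}_{L^p}^2,
\]
which together with the lower bound above gives the announced inequality. The main subtlety in this plan is the $\epsilon$-uniformity of the H\"ormander norm of $F_\epsilon$, which rests on the finite-overlap structure of the Littlewood--Paley partition $\{\varphi(2^{-j}\cdot)\}$, together with the justification that $\sum_j\varphi(2^{-j}H)=I$ strongly on $L^2$, which uses the absence of a zero eigenvalue.
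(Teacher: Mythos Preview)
Your proof is correct and is precisely the ``standard method by Stein'' that the paper invokes but omits: the upper bound via uniform $L^p$-boundedness of the randomized multipliers $F_\epsilon(H)$ (from Theorem~\ref{theorem_multiplier_1}) together with Khintchine's inequality, the lower bound by duality with a companion bump $\tilde\varphi$, and the final Minkowski step for $p\ge 2$. Your treatment of the identity $\sum_j\varphi(2^{-j}H)=I$ on $L^2$ (using $H\ge0$ and $\sigma_{\mathrm p}(H)\subset\mathcal E\cap[0,\infty)=\emptyset$ to rule out a zero eigenvalue) is the only point requiring care beyond the classical case, and you handle it correctly.
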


\begin{proof}
With Theorem \ref{theorem_multiplier_1} as hand, the corollary follows from a standard method by  Stein \cite{Ste2}. The proof is completely same as that for the usual Littlewood-Paley estimate and we omit it. 
\end{proof}

\section{Eigenvalue bounds}
\label{section_EB}

This section is devoted to the proof of Theorem \ref{theorem_EB_1}. The proof is based on a  method by Frank \cite{Fra1} and \cite{Fra2}. 
Recall that $W\in L^{n/2+\gamma}(\R^n;\C)$ with $0<\gamma<\infty$. Then $W$ is $H$-form compact. Indeed, taking $M>-\inf\sigma(H)$, we see that $|W|^{1/2}(1-\Delta)^{-1/2}$ is compact and $(1-\Delta)^{1/2}(H+M)^{-1/2}$ is bounded. Hence $|W|^{1/2}(H+M)^{-1/2}=|W|^{1/2}(1-\Delta)^{-1/2}(1-\Delta)^{1/2}(H+M)^{-1/2}$ is also compact. Then there exists a unique $m$-sectorial operator $H_W$ such that $D(H_W)\subset Q(H_W)=\H^1$ and $\<H_Wu,v\>=\<(H+W)u,v\>$ for $u\in D(H_W)$ and $v\in \H^1$; $D(H_W)$ is dense in $\H^1$; $\sigma(H_W)$ is contained in a sector $\{z\in \C\ |\ |\arg(z-z_0)|\le \theta\}$ with some $z_0\in \R$ and $\theta\in[0,\pi/2)$ (see \cite[Theorems VI.3.9 and VI.2.1]{Kat}). We fix a factorization $W=W_1W_2$ with $W_1=|W|^{1/2}\sgn W$ and $W_2=|W|^{1/2}$, where $\sgn W(x)=W(x)/|W(x)|$ if $W(x)\neq0$ and $\sgn W(x)=0$ if $W(x)=0$. Let $d(z)=\dist(z,[\infty)$. We begin with the following lemma. 

\begin{lemma}\label{lemma_EB_1}
Suppose that $E\in \C\setminus\sigma(H)$ is an eigenvalue of $H_W$. Then $-1$ is an eigenvalue of $W_1R(E)W_2$. Moreover, if $0< \gamma\le 1/2$, the same statement also holds for $E\in(0,\infty)\setminus\mathcal E$ with $R(E)$ replaced by $R(E+ i0)$. 
\end{lemma}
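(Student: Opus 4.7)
The plan is to follow the classical Birman-Schwinger scheme adapted to the factorization $W = W_1 W_2$, and to exploit the elementary fact that for any bounded operators $X, Y$, the products $XY$ and $YX$ share the same nonzero eigenvalues.

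First, I would record that $W_j \in L^{n+2\gamma}(\R^n)$ for $j=1,2$, since $W_j = |W|^{1/2} \cdot (\text{sign factor})$, and that $W_1$ and $W_2$ commute pointwise with $W_1 W_2 = W_2 W_1 = W$. I would then fix $(p,q)$ by $1/p = 1/2 + 1/(n+2\gamma)$ and $1/q = 1/2 - 1/(n+2\gamma)$, so that H\"older's inequality for Lorentz norms gives bounded multiplications $W_2 : L^2 \to L^{p,2}$ and $W_1 : L^{q,2} \to L^2$. For $E \in \C \setminus \sigma(H)$, Corollary \ref{corollary_KRS_2} supplies $R(E) : L^{p,2} \to L^{q,2}$ (the relevant exponents lie in the wider range \eqref{corollary_KRS_2_1}); for $E \in (0,\infty) \setminus \mathcal E$ the constraint $0 < \gamma \le 1/2$ is precisely what places $1/p - 1/q = 2/(n+2\gamma)$ inside the admissible range \eqref{p_q}, so Corollary \ref{corollary_KRS_1} applies to $R(E+i0)$. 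In either case the composition $W_1 R(E) W_2$ and its twin $W_2 R(E) W_1$ are bounded on $L^2$.

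For the forward direction, let $\psi \in D(H_W) \subset \H^1$ be a nontrivial eigenfunction for $E$. Sobolev embedding combined with H\"older shows $\phi := W_2 \psi \in L^2$, and $\phi \not\equiv 0$ must hold, for otherwise $\psi$ would be an eigenfunction of $H$ itself at $E$, contradicting $E \notin \sigma(H)$ in the first case and $E \notin \mathcal E$ in the boundary case. The eigenvalue equation rewrites as $(H-E)\psi = -W_1 \phi$ in $\H^{-1}$. When $E \notin \sigma(H)$, I simply apply $R(E)$ to obtain $\psi = -R(E) W_1 \phi$. When $E \in (0,\infty) \setminus \mathcal E$, I would instead use the $i\epsilon$-regularization
\[
\psi = -R(E+i\epsilon) W_1 \phi - i\epsilon R(E+i\epsilon) \psi
\]
and pass $\epsilon \to 0^+$: the first term converges in $L^{q,2}$ by Corollary \ref{corollary_KRS_1}, and the second tends to zero because $\epsilon (H - E - i\epsilon)^{-1}\psi$ converges strongly in $L^2$ to the spectral projection of $H$ onto $\{E\}$, which is trivial by dominated convergence and $E \notin \sigma_{\mathrm p}(H)$. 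Multiplying by $W_2$ yields $\phi = -W_2 R(E) W_1 \phi$; applying the $XY/YX$ identity with $X = R(E) W_1$, $Y = W_2$ and using $W_1 W_2 = W_2 W_1$ transfers the eigenvalue $-1$ to the operator $W_1 R(E) W_2$.

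Conversely, given $u \in L^2 \setminus \{0\}$ with $W_1 R(E) W_2 u = -u$, the same eigenvalue swap produces $v := R(E) W_2 u \ne 0$ satisfying $v = -R(E) W v$, whence $(H+W-E)v = 0$ in $\H^{-1}$. A short bootstrap — the mapping properties of $R(E)$ place $v$ in a Lorentz space $L^{q,2}$, H\"older with $W \in L^{n/2+\gamma}$ returns $Wv$ to a space to which $R(E)$ maps back with improved regularity — then shows $v \in \H^1 \cap L^2$ with $H_W v = E v \in L^2$, hence $v \in D(H_W)$. The main obstacle will be the limiting-absorption step in the boundary case $E \in (0,\infty) \setminus \mathcal E$: rigorously passing the $i\epsilon$-limit in the operator topologies dictated by Corollary \ref{corollary_KRS_1}, and confirming on the reverse direction that $v = R(E+i0) W_2 u$ lies in $L^2$ and not merely in $L^{q,2}$, so that it qualifies as an element of $D(H_W)$.
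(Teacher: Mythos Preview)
Your approach is essentially the same as the paper's: both rest on the Birman--Schwinger scheme and identify the kernels $\Ker_{L^2}(H_W-E)$, $\Ker_{L^q}(I+R(E)W)$ and $\Ker_{L^2}(I+W_1R(E)W_2)$ via explicit maps, with the boundary case handled through the uniform Sobolev estimate \eqref{corollary_KRS_1_1} for $R(E+i0)$ at the exponent pair $(q',q)$ with $q=\frac{2(n+2\gamma)}{n+2\gamma-2}$. The paper simply invokes the argument of Lemma~\ref{lemma_equivalence_1} rather than spelling out the $i\ep$-regularization.

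One minor detour in your forward direction: having obtained $\phi=W_2\psi$ with $\phi=-W_2R(E)W_1\phi$, you then need two applications of the $XY/YX$ swap plus the commutation $W_1W_2=W_2W_1$ to land on $W_1R(E)W_2$. The paper's route is shorter: from $\psi=-R(E)W\psi$ (which you also have), multiply by $W_1$ and write $W=W_2W_1$ to get $W_1\psi=-(W_1R(E)W_2)(W_1\psi)$ in one step; nonvanishing of $W_1\psi$ follows since $|W_1\psi|=|W_2\psi|=|\phi|$ pointwise.

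You correctly flag the genuine soft spot: in the converse direction for $E\in(0,\infty)\setminus\mathcal E$, the bootstrap you describe (``$R(E)$ maps back with improved regularity'') does \emph{not} iterate on the boundary value $R(E+i0)$, because the gain $1/p-1/q=2/(n+2\gamma)$ from \eqref{corollary_KRS_1_1} is exactly cancelled by multiplication with $W\in L^{n/2+\gamma}$, and the wider range of Corollary~\ref{corollary_KRS_2} is unavailable on the continuous spectrum. The paper's proof is equally terse on this point. Note, however, that only the forward implication is used in the proof of Theorem~\ref{theorem_EB_1}.
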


\begin{proof}
We show the lemma for the case $E\in(0,\infty)\setminus\mathcal E$ only, since, in  the case $E\in \C\setminus \sigma(H)$, the lemma is a consequence of the well-known Birman-Schwinger principle (see, {\it e.g.}, \cite[Section 4]{Fra2}) and the proof is easier. Let $f\in \Ker_{L^2}(H_W-E)$. We let $\varphi\in \S$ and plug $v=R(E-i\ep)W_1\varphi\in \H^1$ into the identity $\<(H-E)f,v\>+\<W_1f,W_2v\>=0$, letting $\ep\searrow0$ and then using Corollary \ref{corollary_KRS_1} (2) to obtain 
$$
\<W_1f,\varphi\>+\<W_1R(E+i0)W_2W_1 f,\varphi\>=0.
$$
Since $\norm{W_1f}_{L^2}\lesssim \norm{W_1}_{L^{n+2\gamma}}\norm{f}_{\H^1}<\infty$, this shows $W_1f\in \Ker_{L^2}(I+W_1R(E+i0)W_2)$. 
\end{proof}

Since $W_1R(E)W_2$ is a compact operator on $L^2$, if $-1$ is an eigenvalue of $W_1R(E)W_2$ then $\norm{W_1R(E)W_2}_{\mathbb B(L^2)}\ge 1$ at least. 
With this remark at hand, it is easy to see that Theorem \ref{theorem_EB_1} follows from the following lemma.

\begin{lemma}
\label{lemma_EB_2}
For any $\delta>0$ and $0\le \gamma\le 1/2$, one has 
\begin{align}
\label{lemma_EB_2_1}
\norm{W_1R(z)W_2}_{\mathbb B(L^2)}\le C_\delta |z|^{-\frac{\gamma}{n/2+\gamma}}\norm{W}_{L^{n/2+\gamma}},\quad z\in \C\setminus \mathcal E_\delta,
\end{align}
where $R(z)$ is replaced by $R(z+i0)$ if $z\in (0,\infty)\setminus \mathcal E_\delta$. Moreover, for any $\gamma>1/2$, 
\begin{align}
\label{lemma_EB_2_2}
\norm{W_1R(z)W_2}_{\mathbb B(L^2)}\le C_{\gamma,\delta}|z|^{-\frac{1/2}{n/2+\gamma}}d(z)^{\frac{\gamma-1/2}{n/2+\gamma}}\norm{W}_{L^{n/2+\gamma}},\quad z\in \C\setminus( \mathcal E_\delta\cup[0,\infty)). 
\end{align}
\end{lemma}

\begin{proof}
\eqref{lemma_EB_2_1} is a direct consequence of \eqref{theorem_KRS_1_1} and \eqref{corollary_KRS_1_1} with $1/p=1/2+1/(n+2\gamma)$ and $q=p'$. 

For the proof of \eqref{lemma_EB_2_2}, we take $\theta=\frac{2\gamma-1}{n+2\gamma}\in(0,1)$ so that
$
1-\theta=\frac{n+1}{n+2\gamma}
$. 
Interpolating between \eqref{theorem_KRS_1_1}  with $p=2(n+1)/(n+3)$, $q=p'$ and the trivial bound
$
\norm{R(z)}_{\mathbb B(L^2)}=\dist(z,[0,\infty))^{-1}
$ and, then, using H\"older's inequality, we obtain
\begin{align*}
\norm{W_1R(E)W_2}_{\mathbb B(L^2)}\le C_{\gamma,\delta} |z|^{-\frac{1-\theta}{n+1}}d(z)^{-\theta}\norm{W}_{L^{n/2+\gamma}}
=C_{\gamma,\delta}|z|^{-\frac{1/2}{n/2+\gamma}}d(z)^{\frac{\gamma-1/2}{n/2+\gamma}}\norm{W}_{L^{n/2+\gamma}}
\end{align*}
which completes the proof.
\end{proof}

\appendix

\section{Real interpolation and Lorentz space}
\label{appendix_interpolation}
Here a brief summery of real interpolation spaces and Lorentz spaces is given without proofs. One can find a much more detailed exposition in \cite{BeLo,Gra1}. 

A pair of Banach spaces $(\A,\B)$ is said to be a Banach couple if both $\A,\B$ are algebraically and topologically embedded in a Hausdorff topological vector space $\mathcal C$. Note that one can always take $\mathcal C$ to be a Banach space $\A_0+\A_1$. 
Given a Banach couple $(\A_0,\A_1)$ and $0<\theta<1$ and $1\le q\le \infty$, one can define a Banach space $\A_{\theta,q}=(\A_0,\A_1)_{\theta,q}$ by the so-called $K$-method, which satisfies that $(\A_0,\A_0)_{\theta,q}=\A_0$  and $(\A_0,\A_1)_{\theta,q}=(\A_1,\A_0)_{1-\theta,q}$ with equivalent norms and that if $1\le q_1\le q_2\le \infty$ then $(\A_0,\A_1)_{\theta,1}\hookrightarrow (\A_0,\A_1)_{\theta,q_1}\hookrightarrow (\A_0,\A_1)_{\theta,q_2}\hookrightarrow (\A_0,\A_1)_{\theta,\infty}$. Then the following real interpolation theorem is frequently used in this paper. 


\begin{theorem}[{\cite[Theorem 3.1.2]{BeLo},\cite{Pot}}]
\label{theorem_interpolation_2}
Let $(\A_0,\A_1)$ and $(\B_0,\B_1)$ be two Banach couples, $0<\theta<1$ and $1\le q\le\infty$. Suppose  that $T$ is a bounded linear operator from $(\A_0,\A_1)$ to $(\B_0,\B_1)$ in the sense  that $T:\A_j\to \B_j$ and
$\norm{T}_{\mathbb B(\A_j,\B_j)}\le M_j
$, $j=0,1$. Then $T$ is bounded from $\A_{\theta,q}$ to $\B_{\theta,q}$ and satisfies
$
\norm{T}_{\mathbb B(\A_{\theta,q},\B_{\theta,q})}\le M_0^{1-\theta}M_1^\theta
$. 
Moreover, if both $T:\A_0\to \B_0$ and $T:\A_1\to \B_1$ are compact, then $T:\A_{\theta,q} \to \B_{\theta,q}$ is also compact. 
\end{theorem}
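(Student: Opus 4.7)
The plan is to work throughout with the $K$-method definition $\norm{a}_{\A_{\theta,q}}=\bigl(\int_0^\infty[t^{-\theta}K(t,a;\A_0,\A_1)]^q\,\frac{dt}{t}\bigr)^{1/q}$ (with the obvious $\sup$-modification for $q=\infty$), where $K(t,a;\A_0,\A_1):=\inf\{\norm{a_0}_{\A_0}+t\norm{a_1}_{\A_1}\,:\,a=a_0+a_1,\ a_i\in \A_i\}$.

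For the boundedness statement, for any admissible decomposition $a=a_0+a_1$, linearity of $T$ and the two a priori bounds give $\norm{Ta_0}_{\B_0}+t\norm{Ta_1}_{\B_1}\le M_0\norm{a_0}_{\A_0}+tM_1\norm{a_1}_{\A_1}=M_0\bigl(\norm{a_0}_{\A_0}+(tM_1/M_0)\norm{a_1}_{\A_1}\bigr)$. Taking the infimum over decompositions produces the pointwise inequality $K(t,Ta;\B_0,\B_1)\le M_0\,K(tM_1/M_0,a;\A_0,\A_1)$. Inserting this into the defining integral and performing the scale-invariant change of variables $s=tM_1/M_0$ (so $dt/t=ds/s$ and $t^{-\theta}=(M_0/M_1)^{-\theta}s^{-\theta}$) immediately yields $\norm{Ta}_{\B_{\theta,q}}\le M_0^{1-\theta}M_1^{\theta}\norm{a}_{\A_{\theta,q}}$; the $q=\infty$ case is the same argument with $\sup$ replacing the integral.

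The compactness assertion is the substantive part. My plan is to pass to the equivalent discrete $J$-representation of $\A_{\theta,q}$: every $a\in \A_{\theta,q}$ may be written as $a=\sum_{k\in \Z} u_k$ (convergent in $\A_0+\A_1$) with $u_k\in \A_0\cap \A_1$ and $\bigl(\sum_k[2^{-k\theta}J(2^k,u_k)]^q\bigr)^{1/q}\lesssim \norm{a}_{\A_{\theta,q}}$, where $J(t,u):=\max(\norm{u}_{\A_0},t\norm{u}_{\A_1})$; this $K/J$ equivalence is the fundamental lemma of real interpolation, imported from \cite{BeLo}. Given a bounded sequence $\{a^{(n)}\}\subset \A_{\theta,q}$, fix such representations $a^{(n)}=\sum_k u_k^{(n)}$ with $\sup_n\sum_k[2^{-k\theta}J(2^k,u_k^{(n)})]^q<\infty$. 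For every fixed $k$ the sequence $\{u_k^{(n)}\}_n$ is bounded both in $\A_0$ and in $\A_1$, so by compactness of $T:\A_j\to \B_j$ a diagonal extraction over $k\in \Z$ produces a subsequence, still written $\{a^{(n)}\}$, along which $\{Tu_k^{(n)}\}_n$ is Cauchy in both $\B_0$ and $\B_1$ for every $k$.

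To conclude I would write $Ta^{(n)}-Ta^{(m)}=\sum_k T(u_k^{(n)}-u_k^{(m)})$ and bound its $\B_{\theta,q}$-norm, again via the $J$-method, by $\bigl(\sum_k[2^{-k\theta}J(2^k,T(u_k^{(n)}-u_k^{(m)});\B_0,\B_1)]^q\bigr)^{1/q}$. Splitting the sum at $|k|\le N$ versus $|k|>N$, the tail is controlled by the elementary bound $J(2^k,Tu;\B_0,\B_1)\le \max(M_0,M_1)\,J(2^k,u;\A_0,\A_1)$ combined with the uniform-in-$n$ $\ell^q$-summability inherited from $\sup_n\norm{a^{(n)}}_{\A_{\theta,q}}<\infty$, which can be made arbitrarily small by choosing $N$ large; the head $|k|\le N$ contains only finitely many summands, each tending to $0$ as $n,m\to\infty$ by the $\B_0\cap \B_1$ Cauchy property. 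Sending first $n,m\to\infty$ and then $N\to\infty$ yields the Cauchy property of $\{Ta^{(n)}\}$ in $\B_{\theta,q}$. The main obstacle is the simultaneous control of the tail and the diagonal extraction; once the $K$/$J$ equivalence is taken for granted this reduces to a standard compactness-in-$\ell^q$ diagonalisation, but that equivalence is the genuinely nontrivial ingredient of the argument.
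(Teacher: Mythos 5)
The paper does not supply a proof of this theorem: it is quoted directly from the literature (the boundedness is \cite[Theorem~3.1.2]{BeLo}, the compactness is \cite{Pot}), and the appendix explicitly states that the material is given ``without proofs.'' So there is no in-paper argument to compare against; your proposal must be judged on its own.

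Your treatment of the boundedness assertion is correct and is the standard one: the pointwise estimate $K(t,Ta;\B_0,\B_1)\le M_0\,K(tM_1/M_0,a;\A_0,\A_1)$ followed by the scale-invariant substitution $s=tM_1/M_0$ gives exactly $\norm{T}_{\mathbb B(\A_{\theta,q},\B_{\theta,q})}\le M_0^{1-\theta}M_1^{\theta}$ for every $q$, including $q=\infty$.

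The compactness argument, however, has a genuine gap, and it is precisely at the point you flag as ``the main obstacle.'' After the diagonal extraction, you split $\sum_k T(u_k^{(n)}-u_k^{(m)})$ into $|k|\le N$ and $|k|>N$, and you control the tail via the inequality $J(2^k,Tu)\le\max(M_0,M_1)J(2^k,u)$ together with the fact that $\sup_n\sum_k[2^{-k\theta}J(2^k,u_k^{(n)})]^q<\infty$. But a \emph{uniform} $\ell^q$ bound on the full sum does \emph{not} give uniform-in-$n$ smallness of the tails $\sum_{|k|>N}[2^{-k\theta}J(2^k,u_k^{(n)})]^q$: a family of sequences can be bounded in $\ell^q$ while having all of its mass drift out to $|k|\approx n$, so the quantity you want small remains of order one along your subsequence. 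The $J$-representation of $a^{(n)}$ is chosen per $n$ and nothing forces the tail decay to be uniform. This is exactly the obstruction that makes compactness interpolation a nontrivial theorem; the correct arguments (Persson, Hayakawa, Cwikel, Cobos--Edmunds--Potter) introduce additional structure --- e.g.\ approximating $T$ by operators of finite rank, interpolating measures of non-compactness, or a careful retraction/co-retraction onto weighted $\ell^q$ vector-valued spaces where a genuine equi-small-tails argument can be run --- precisely to circumvent this point. As written, the step ``the tail \dots can be made arbitrarily small by choosing $N$ large'' is an unjustified assertion of equi-summability, and the proof does not close without it.
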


Next we recall the definition and basic properties of Lorentz spaces. Given a $\mu$-measurable function $f$ on $\R^n$, we let $\mu_f(\alpha)=\mu(\{x\ |\ |f(x)|>\alpha\})$.
If we define the decreasing rearrangement of $f$ by $f^{*}(t) =\inf\{\alpha\ |\ \mu_f(\alpha)\le t\}$ then the Lorentz space $L^{p,q}(\R^n)$ is the set of measurable $f$ such that the following quasi-norm is finite: 
$$
\norm{f}^*_{L^{p,q}}:=\norm{t^{1/p-1/q}f^*(t)}_{L^q(\R_+,dt)}=p^{1/q}\norm{\alpha \mu_f(\alpha)^{1/p}}_{L^q(\R_+,\alpha^{-1}d\alpha)}<\infty
$$
Moreover, if $1<p<\infty$ and $1\le q\le \infty$ (which are sufficient for our purpose), then
$$
\norm{f}_{L^{p,q}}:=\norm{f^{**}}_{L^{p,q}}^*,\quad f^{**}(t):=\frac1t\int_0^tf^*(\alpha)d\alpha,
$$
becomes a norm on $L^{p,q}$ which makes $L^{p,q}$ a Banach space. Furthermore, $\norm{\cdot}_{L^{p,q}}$ is equivalent to $\norm{\cdot}^*_{L^{p,q}}$ in the sense that $\norm{f}^*_{L^{p,q}}\le \norm{f}_{L^{p,q}}\le C(p,q)\norm{f}^*_{L^{p,q}}$ with some constant $C(p,q)>0$. Thus all continuity estimates for linear operators can be expressed in terms of $\norm{\cdot}_{L^{p,q}}^*$. $L^{p,q}$ is increasing in $q$: $L^{p,1}\hookrightarrow L^{p,q_1}\hookrightarrow L^{p,p}=L^p\hookrightarrow L^{p,q_2}\hookrightarrow L^{p,\infty}$ if $1<q_1<p<q_2<\infty$. Moreover, $L^{p,q}$ is characterized by real interpolation: for $0<\theta<1$, $1<p_1<p_2<\infty$ with $\frac1p=\frac{1-\theta}{p_1}+\frac{\theta}{p_2}$ and $1\le q\le \infty$, one has $
(L^{p_0},L^{p_2})_{\theta,q}=L^{p,q}
$ with equivalent norms. If $1<p,q<\infty$ then $L^{p,q}(X;\C)'=L^{p',q'}(X;\C)$, where $r'=r/(r-1)$ is the H\"older conjugate of $r$. 

Finally we record two inequalities used frequently in this paper. First, for $1\le p,p_j<\infty$ and $1\le q,q_j\le \infty$ with $\frac1p=\frac{1}{p_1}+\frac{1}{p_2}$ and $\frac1q=\frac{1}{q_1}+\frac{1}{q_2}$, one has H\"older's inequality
\begin{equation}
\begin{aligned}
\label{Holder}
\norm{fg}_{L^{p,q}}&\le C\norm{f}_{L^{p_1,q_1}}\norm{g}_{L^{p_2,q_2}},\quad
\norm{fg}_{L^{p,q}}&\le C\norm{f}_{L^{\infty}}\norm{g}_{L^{p,q}}. 
\end{aligned}
\end{equation}
Secondly, for $1<s<n$, $1<p<q<\infty$, $\frac1p-\frac1q=\frac2n$ and $1\le r\le \infty$, we have the HLS inequality
\begin{align}
\label{HLS}
\norm{(-\Delta)^{-s/2}f}_{L^{q,r}}\le C\norm{f}_{L^{p,r}}. 
\end{align}



\end{document}